\documentclass{amsart}

\usepackage{amsfonts}
\usepackage{txfonts}
\usepackage{amssymb}
\usepackage{graphicx}
\graphicspath{{figures/}}
\usepackage{epsfig,mathrsfs}
\usepackage[bf,SL,BF]{subfigure}
\usepackage[hypertexnames=false]{hyperref}
\usepackage{longtable}
\usepackage{booktabs}
\usepackage{cite}
\usepackage{diagbox}
\usepackage{caption}
\usepackage{float}
\usepackage{epstopdf}
\usepackage{latexsym, bm}
\usepackage{bbm}
\usepackage{algorithm, algorithmic}
\usepackage{cleveref}
\newtheorem{theorem}{Theorem}[section]

\newtheorem{lemma}[theorem]{Lemma}
\newtheorem{assumption}[theorem]{Assumption}
\newtheorem{prop}[theorem]{Proposition}
\theoremstyle{definition}
\newtheorem{definition}[theorem]{Definition}
\newtheorem{remark}[theorem]{Remark}
\numberwithin{equation}{section}
\numberwithin{figure}{section}
\numberwithin{table}{section}

\usepackage{latexsym, bm}
\begin{document}

	\title[Multiple fractional Laplacian Schr\"{o}dinger equation with drift]{The Calder\'on problem for the multi-fractional Laplacian Schr\"{o}dinger equation with drift terms}
	
	\author[Jin Wen, Yun-Long Liu, Jun-Liang Fu]{Jin Wen$^{1, 2}$, Yun-Long Liu$^{1}$, Jun-Liang Fu$^{1}$\\ \small \it 1. Department of Mathematics, Northwest Normal University, Lanzhou 730070, P. R. China\\
2. Gansu Prov Res Ctr Basic Disciplines Math \& Stat, Lanzhou 730070, P. R.China }
	\date{\today}
	\thanks{{*}Corresponding author. Email: wenj@nwnu.edu.cn. The work described in this article was supported by the NNSF of China (12261082, 11326234, 12501593), NSF of Gansu Province (145RJZA099), Scientific research project of Higher School in Gansu Province (2014A-012), Project of NWNU-LKQN2020-08, Graduate First Class Course Construction Project of Northwest Normal University in 2024 (2024KCLX01012) and Graduate Education and Curriculum Reform Project of Northwest Normal University in 2024 (2024KGLX01016).}

\begin{abstract}
This paper studies the Calder\'on inverse problem for the multi-fractional Laplacian Schr\"odinger equation with drift terms, whose leading operator is a sum of fractional Laplacians of different orders, perturbed by a drift vector field and a potential. We consider two inverse settings: in the homogeneous source case \(f=0\), we aim to simultaneously recover the drift coefficient \(b\) and the potential \(q\) from the associated Dirichlet-to-Neumann map; in the non-homogeneous case \(f\neq 0\), we aim to simultaneously determine \(b\), \(q\), and the source term \(f\). Under suitable assumptions, we establish the well-posedness of the forward problem, derive a strong unique continuation principle, prove Runge-type approximation properties in fractional Sobolev spaces and weighted \(C^k\) spaces, and obtain generalized Alessandrini identities. These tools yield global uniqueness results for the inverse problems. In addition, we develop a numerical reconstruction framework combining finite difference discretization, radial basis function surrogate modeling, and hierarchical Bayesian inversion. Numerical examples in one dimension for smooth and discontinuous coefficients and sources illustrate the effectiveness of the proposed method and support the theoretical uniqueness results.

\textbf{Keywords}: Calder\'on problem; multi-fractional Laplacian; drift term; Runge approximation; Bayesian inversion.	
\end{abstract}

	\maketitle

\section{Introduction}
In 1980, Calder\'on proposed the classic Calder\'on inverse problem \cite{Cal-1980}. Since then, it has been extended to nonlocal Calder\'on problems. The physical motivation for nonlocal Calder\'on problems comes from the ability of fractional calculus to describe nonlocality, memory effects, and anomalous diffusion. Fractional models have a wide range of applications in quantum mechanics, medical imaging, geophysical exploration, and environmental science \cite{LPG-2020, QU-2024, KLW-2022}.

Regarding the research on nonlocal Calder\'on problems, T. Ghosh et al. first proposed the study of the Calder\'on problem in the fractional Schr\"odinger equation \cite{GSU-2016}. They proved the global uniqueness of the inverse problem in two specific cases, that is, the unknown potential function within the boundary can be uniquely determined by the external measurement of the solution. For the generalized fractional-type Calder\'on problem, many scholars have conducted research in recent years and achieved fruitful results. For example, Fractional inverse Laplace problem under low-order nonlocal perturbations \cite{Ghosh2020}, Uniqueness and Reconstruction of fractional Calder\'on problem under single measurement \cite{BGU-2021}, uniqueness of higher-order Calder\'on problem involving linear local operators \cite{CMRU-2022}, and Calder\'on problem for the fractional Dirac operator \cite{QU-2024}. For more research on fractional Calder\'on  problems, please refer to \cite{CR-2021,Fei-2024,BS-2024}.

In \cite{CekicLinRuland2020}, the authors study the Calder\'on problem for the fractional Schr\"odinger equation with a drift term, proving that the unknown drift and potential functions in a bounded domain can be simultaneously and uniquely determined by an infinite number of external measurements. The uniqueness result is also supported by relevant logarithmic stability estimates under appropriate a priori assumptions. Additionally, it discusses the existence of uniqueness under a finite number of general measurements. In \cite{LinLiuLo2025}, the authors study the inverse problem involving multi-fractional operators and proposes a class of equations involving multi-fractional operators. The unique continuation property for fractional harmonic functions is also presented. This article focuses on inverse problem for the multi-fractional Schr\"odinger equation with drift, which is an extension of their work.

In this paper, we consider the following multi-fractional Laplacian Schr\"{o}dinger equation with drift terms
\begin{equation}
\label{eq11}
\begin{cases}
(\sum_{k=1}^m (-\Delta)^{s_k} + b(x)\cdot\nabla + q(x))u(x) = f(x), & x \in \Omega, \\
u(x) = g(x), & x \in \Omega_e = \mathbb{R}^n \setminus \overline{\Omega},
\end{cases}
\end{equation}
where $\Omega \subset \mathbb{R}^n$ be a non-empty bounded domain (open and connected), $s_1 < \cdots < s_m,\ s_m>\frac12 $, \(b\in W^{1-s_m,\infty}(\Omega)^n\), \(q\in L^\infty(\Omega)\), and \(f\in L^2(\Omega)\).
The fractional Laplacian $(-\Delta)^s$ is defined via the hypersingular integral
\begin{equation}
\label{eq12}
(-\Delta)^s u(x) = c_{n,s} \, \mathrm{P.V.} \int_{\mathbb{R}^n} \frac{u(x) - u(\mathbf{y})}{|x - \mathbf{y}|^{n+2s}} \, d\mathbf{y},
\end{equation}
with the normalization constant
\begin{equation}\label{eq13}
c_{n,s} = \frac{2^{2s} s \, \Gamma(\frac{n}{2} + s)}{\pi^{n/2} \Gamma(1 - s)}.
\end{equation}
In this paper, we consider the following two problems:
\begin{itemize}
\item[(A)]: If \(f=0\), we aim to simultaneously recover the drift coefficient $b$ and the potential $q$ simultaneously from the relevant DTN map.
\item[(B)]: When \(f\neq 0\), we wish to recover simultaneously the drift coefficient $b$, the potential function $q$, and the source term $f$ from the associated DtN map.
\end{itemize}
Regarding these two problems, the main contributions of this paper are threefold. Firstly, when $f = 0$, we proved the uniqueness of the simultaneous inversion of the drift $b$ and the potential $q$. Secondly, when $f\neq0$, we proved that the complete Dirichlet-to-Neumann (DtN) mapping uniquely determines $b$, $q$ and $f$. Thirdly, we established the required Runge approximation property and the generalized Alessandrini identity. At the same time, we presented a numerical method and numerical experiments were provided to illustrate the reconstruction effect.

The qualitative Runge approximation property is a key tool for studying fractional-order Schr\"odinger-type inverse problems, which was first proposed by \cite{GSU-2016}. To derive this result, the authors of \cite{GSU-2016} used the unique continuation property obtained from Carleman estimates for the fractional Schr\"odinger equation in \cite{Ruland-2015}, and also referred to the studies in \cite{FF-2014, FF-2015, Seo-2015, R-2017, Ghosh2020, Yu-2017} on the unique continuation properties related to the fractional-order Schr\"odinger equation. For the fractional-order Schr\"odinger operator with variable coefficients, in \cite{GLX-2017} the authors  used the Almgren frequency function to derive this property.

To prove the approximation property, we need to use the unique continuation. We have noticed that there are significant differences between the classical Calder\'on problem and the fractional-order Calder\'on problem. Specifically, when dealing with the fractional-order problem, it is not necessary to construct the CGO solution. On the other hand, unique continuation is a notable feature of fractional-order operators, while local operators do not possess this property. This makes the fractional-order inverse problem easier to handle and helps us obtain stronger results. In fact, \cite{CGRU-2026} has pointed out that the uniqueness results obtained in the local case can guarantee their validity in the fractional-order case, but not vice versa. Therefore, when studying high-order fractional Laplacian operators, many works focus on deriving the unique continuation principle, such as \cite{Seo-2014, Seo-20152, Seo-2015, GAR-2019, FeF-2020, CMRU-2022, KRZ-2023, SA-2020}. In these studies, many rely on the extension of the Caffarelli-Silvestre type, which has been extended to higher-order fractional exponents.

The following condition will be assumed in this paper:
\begin{assumption}
\label{ass1}
If \(u\in H^{s_m}(\mathbb{R}^{n})\) solves
\begin{equation*}
\left\{
\begin{array}{ll}
(\sum_{k=1}^m (-\Delta)^{s_k} + b(x)\cdot\nabla + q(x))u(x) = 0, & x \in \Omega, \\
u(x) = 0, & x \in \Omega_e,
\end{array}
\right.
\end{equation*}
then \(u\equiv 0\). This is equivalent to the assumption that zero is not a Dirichlet eigenvalue of \(\sum_{k=1}^m (-\Delta)^{s_k} + b\cdot\nabla + q\).
\end{assumption}

This paper is organized as follows. Section~\ref{sec2} introduces the preliminary framework for the multi-fractional Laplacian Schr\"odinger equation with drift, including fractional Sobolev spaces, the bilinear form and well-posedness of the forward problem, the DtN map, and the strong unique continuation property. Section~\ref{sec3} develops the Runge approximation property, first in $\widetilde H^{s_m}(\Omega)$ and then in higher-order weighted $C^k$ spaces. Section~\ref{sec4} studies uniqueness, generalized Alessandrini identities are derived, and it is shown that the DtN map uniquely determines $(b,q)$ in the homogeneous case $f=0$ and $(b,q,f)$ in the non-homogeneous case $f\neq0$. Section~\ref{sec5} presents the numerical method, including parameterization, finite difference discretization of the forward problem, the RBF surrogate model, and a hierarchical Bayesian inversion framework with ARD/TV-GSM priors, IAS optimization, and uncertainty quantification. Section~\ref{sec6} reports four one-dimensional numerical examples for smooth and discontinuous coefficients in both homogeneous and non-homogeneous settings, with different noise levels. Finally, Section~\ref{sec7} provides a short conclusion.

\section{Preliminaries}\label{sec2}

Let $n\in\mathbb N$ and let $\mathbb R^n$ be the $n$-dimensional Euclidean space. We denote by $\mathcal S(\mathbb R^n)$ the Schwartz space and by $\mathcal S'(\mathbb R^n)$ its dual space of tempered distributions. The Fourier transform of a tempered distribution $u$ is defined by
\begin{equation*}
\widehat u(\xi)=\int_{\mathbb R^n} e^{-2\pi i x\cdot \xi}u(x)\,dx,
\end{equation*}
with the usual extension to $\mathcal S'(\mathbb R^n)$.

For $s\in\mathbb R$, the fractional Sobolev space $H^s(\mathbb R^n)$ is defined by
\begin{equation*}
H^s(\mathbb R^n):=\Bigl\{u\in\mathcal S'(\mathbb R^n):
\|u\|_{H^s(\mathbb R^n)}:=\bigl\|(1+|\xi|^2)^{s/2}\widehat u(\xi)\bigr\|_{L^2(\mathbb R^n)}<\infty\Bigr\}.
\end{equation*}
The homogeneous space $\dot H^s(\mathbb R^n)$ is given by
\begin{equation*}
\dot H^s(\mathbb R^n):=\Bigl\{u\in\mathcal S'(\mathbb R^n):
\|u\|_{\dot H^s(\mathbb R^n)}:=\bigl\||\xi|^s\widehat u(\xi)\bigr\|_{L^2(\mathbb R^n)}<\infty\Bigr\}.
\end{equation*}
For $s>0$, the norms $\|\cdot\|_{\dot H^s}$ and $\|(-\Delta)^{s/2}\cdot\|_{L^2}$ are equivalent.

For $s>0$, the dual space $H^{-s}(\mathbb R^n)$ is defined by the norm
\begin{equation*}
\|u\|_{H^{-s}(\mathbb R^n)}:=\bigl\|(1+|\xi|^2)^{-s/2}\widehat u(\xi)\bigr\|_{L^2(\mathbb R^n)}.
\end{equation*}

Let $\mathcal O\subset\mathbb R^n$ be a nonempty open set. We define the following fractional Sobolev spaces on $\mathcal O$:
\begin{align*}
H^s(\mathcal O)&:=\bigl\{u|_{\mathcal O}: u\in H^s(\mathbb R^n)\bigr\},\\
\widetilde H^s(\mathcal O)&:=\overline{C_c^\infty(\mathcal O)}^{\,\|\cdot\|_{H^s(\mathbb R^n)}},\\
H_0^s(\mathcal O)&:=\overline{C_c^\infty(\mathcal O)}^{\,\|\cdot\|_{H^s(\mathcal O)}},
\end{align*}
and
$$H_{\overline{\mathcal O}}^s:=\bigl\{u\in H^s(\mathbb R^n): \operatorname{supp}u\subset \overline{\mathcal O}\bigr\}.$$
The space $H^s(\mathcal O)$ is equipped with the quotient norm
\begin{equation*}
\|u\|_{H^s(\mathcal O)}:=\inf\bigl\{\|v\|_{H^s(\mathbb R^n)}: v\in H^s(\mathbb R^n),\ v|_{\mathcal O}=u\bigr\}.
\end{equation*}

For a bounded Lipschitz domain $\Omega$, we have
\begin{equation*}
\widetilde H^s(\Omega) = H_{\overline{\Omega}}^s \quad \text{for } s>-1/2,\ s\notin\{1/2,3/2,\dots\},
\end{equation*}
and the duality relations
\begin{equation*}
\bigl(H^s(\Omega)\bigr)^* = \widetilde H^{-s}(\Omega),\qquad
\bigl(\widetilde H^s(\Omega)\bigr)^* = H^{-s}(\Omega).
\end{equation*}

For the treatment of drift terms, we also use the $L^p$-based fractional Sobolev spaces $W^{s,p}(\mathbb R^n)$, defined for $1<p<\infty$ and $0<s<1$ by
\begin{equation*}
W^{s,p}(\mathbb R^n):=\bigl\{u\in L^p(\mathbb R^n): \|u\|_{W^{s,p}(\mathbb R^n)}:=\|(1-\Delta)^{s/2}u\|_{L^p(\mathbb R^n)}<\infty\bigr\}.
\end{equation*}

\subsection{Multi-fractional Laplacian}

Let $m\in\mathbb N$, and let
\begin{equation*}
0<s_1<s_2<\cdots<s_m<1,\qquad s_m>\frac{1}{2}.
\end{equation*}
For each $i=1,\dots,m$, we define the \emph{multi-fractional operator}
\begin{equation*}
P((-\Delta)^s)u:=\sum_{i=1}^m  (-\Delta)^{s_i}u.
\end{equation*}

We shall use the following standard estimates.

\begin{lemma}[Poincar\'e inequality \cite{CoviMonkkonenRailo2021}]\label{lm21}
Let $s>0$, $s\notin\mathbb N$, $K\subset\mathbb R^n$ compact, and $u\in H_K^s(\mathbb R^n)$. Then there exists a constant $C=C(n,s,K)>0$ such that
\begin{equation*}
\|u\|_{L^2(\mathbb R^n)}\le C\|(-\Delta)^{s/2}u\|_{L^2(\mathbb R^n)}.
\end{equation*}
\end{lemma}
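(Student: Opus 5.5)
We prove the inequality by contradiction, combining compactness with the fact that a compactly supported function cannot have Fourier transform vanishing at the origin in all directions unless it is zero.

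\emph{Step 1 (reduction to $0<s<1$).} For $s>1$ we reduce to the fractional range by iterating a bound of the form $\|(-\Delta)^{t/2}u\|\le C\|(-\Delta)^{s/2}u\|$. Alternatively we run the same argument directly for general $s$; the argument below does not actually use $s<1$. The main case for the paper is $0<s<1$.

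\emph{Step 2 (compactness).} Suppose the inequality fails. Then there are $u_j\in H^s_K(\mathbb R^n)$ with $\|u_j\|_{L^2}=1$ and $\|(-\Delta)^{s/2}u_j\|_{L^2}\to0$. Since $\|u\|_{H^s}^2\simeq\|u\|_{L^2}^2+\|(-\Delta)^{s/2}u\|_{L^2}^2$, the sequence $(u_j)$ is bounded in $H^s(\mathbb R^n)$. All $u_j$ are supported in the fixed compact set $K$, so Rellich's theorem applies: choose a bounded open $B\supset K$ and use the compact embedding of $H^s_{\overline B}$ into $L^2$. Passing to a subsequence, $u_j\to u$ strongly in $L^2$ and weakly in $H^s$. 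Hence $\|u\|_{L^2}=1$ and $\operatorname{supp}u\subset K$.

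\emph{Step 3 (the limit vanishes).} By weak lower semicontinuity, $\||\xi|^s\widehat u\|_{L^2}\le\liminf_j\||\xi|^s\widehat u_j\|_{L^2}=0$. Therefore $\widehat u(\xi)=0$ for almost every $\xi\neq0$. Because $u\in L^2$ has compact support, $u\in L^1$, so $\widehat u$ is continuous; in fact it is entire by Paley–Wiener. It follows that $\widehat u\equiv0$, so $u=0$, contradicting $\|u\|_{L^2}=1$. The constant obtained this way depends only on $n$, $s$ and $K$.

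The main obstacle is the compactness in Step 2: it needs uniform support in $K$ to apply Rellich on a bounded set. With $K$ fixed this is standard. A direct, non-contradiction route is also available and gives an explicit constant. Split the frequencies into $|\xi|<\delta$ and $|\xi|\ge\delta$. On the low frequencies, $|\widehat u(\xi)|\le\|u\|_{L^1}\le|K|^{1/2}\|u\|_{L^2}$, so that region contributes at most $C\delta^n|K|\,\|u\|_{L^2}^2$. Choosing $\delta$ small lets us absorb this term, and the high frequencies are controlled by $\delta^{-2s}\||\xi|^s\widehat u\|^2$.
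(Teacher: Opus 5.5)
Your argument is correct. The paper gives no proof of this lemma and simply imports it from \cite{CoviMonkkonenRailo2021}, so what you supply is a self-contained replacement rather than a variant of something in the paper.

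Both of your routes work for every $s>0$, so Step 1 can be deleted. The hypothesis $s\notin\mathbb N$ plays no role in this inequality; it matters for unique continuation, not for Poincar\'e. Deleting Step 1 also avoids a problem: the iteration you sketch is not straightforward, because $(-\Delta)^{t/2}u$ is no longer compactly supported, so the inequality cannot simply be reapplied to it.

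In the contradiction route you misidentify where compact support enters. Weak lower semicontinuity gives $|\xi|^s\widehat u=0$ a.e. Since the origin is a null set, you then have $\widehat u=0$ a.e., hence $u=0$ by Plancherel. No continuity or Paley--Wiener argument is needed. The support condition is used only through Rellich, and that is exactly what stops the $L^2$ mass of the $u_j$ from concentrating at frequency zero (as it does for $j^{-n/2}\phi(x/j)$). Your opening sentence about Fourier transforms vanishing at the origin should be changed accordingly.

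The direct frequency-splitting argument at the end is the better proof, and I would make it the main one. It makes the same mechanism quantitative through $\|\widehat u\|_{L^\infty}\le|K|^{1/2}\|u\|_{L^2}$. Let $\omega_n$ be the volume of the unit ball and choose $\delta$ so that $\omega_n\delta^n|K|=\tfrac12$. Then
$\|u\|_{L^2}^2\le\tfrac12\|u\|_{L^2}^2+(2\pi\delta)^{-2s}\|(-\Delta)^{s/2}u\|_{L^2}^2$,
where the factor $2\pi$ comes from the paper's Fourier normalization. This gives the lemma with $C=\sqrt2\,(2\pi\delta)^{-s}=c(n,s)\,|K|^{s/n}$; the case $|K|=0$ is trivial. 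This constant is explicit, scales correctly, and depends on $K$ only through its measure. The compactness argument gives only an unspecified $C(n,s,K)$.
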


\begin{lemma}[Sobolev-type inequality \cite{RailoZimmermann2022}]\label{lm22}
Let $\Omega\subset\mathbb R^n$ be bounded open, $0\le r\le s$, and $u\in\widetilde H^s(\Omega)$. Then there exists $C=C(n,r,s,\Omega)>0$ such that
\begin{equation*}
\|(-\Delta)^{r/2}u\|_{L^2(\mathbb R^n)}\le C\|(-\Delta)^{s/2}u\|_{L^2(\mathbb R^n)}.
\end{equation*}
\end{lemma}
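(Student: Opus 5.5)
The natural route is Fourier analysis via Plancherel. Since $u\in\widetilde H^s(\Omega)$ is an $H^s(\mathbb R^n)$-limit of $C_c^\infty(\Omega)$ functions, and both sides of the inequality are continuous with respect to the $H^s(\mathbb R^n)$ norm (using $r\le s$), it suffices to prove the estimate for $u\in C_c^\infty(\Omega)$, or directly for $u\in H^s_{\overline\Omega}$ with support in the compact set $K=\overline\Omega$. By Plancherel,
\begin{equation*}
\|(-\Delta)^{r/2}u\|_{L^2(\mathbb R^n)}^2=\int_{\mathbb R^n}|\xi|^{2r}|\widehat u(\xi)|^2\,d\xi ,
\end{equation*}
up to the $2\pi$ normalization fixed by the chosen Fourier convention, which only affects constants.

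Next, split frequency space into $\{|\xi|\ge1\}$ and $\{|\xi|<1\}$.

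\textbf{High frequencies.} On $\{|\xi|\ge1\}$ we have $|\xi|^{2r}\le|\xi|^{2s}$, because $r\le s$. This region is therefore bounded by $\|(-\Delta)^{s/2}u\|_{L^2}^2$.

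\textbf{Low frequencies.} On $\{|\xi|<1\}$ we have $|\xi|^{2r}\le1$, so this part is at most $\|u\|_{L^2}^2$.

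Combining the two regions gives
\begin{equation*}
\|(-\Delta)^{r/2}u\|_{L^2}^2\le\|(-\Delta)^{s/2}u\|_{L^2}^2+\|u\|_{L^2}^2 .
\end{equation*}

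The remaining term $\|u\|_{L^2}$ is where the compact support is essential. I would invoke Lemma~\ref{lm21} with $K=\overline\Omega$, which gives $\|u\|_{L^2}\le C\|(-\Delta)^{s/2}u\|_{L^2}$. Substituting this closes the estimate with a constant depending on $n,r,s,\Omega$.

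The one obstacle is that Lemma~\ref{lm21} requires $s\notin\mathbb N$, whereas here $s$ is arbitrary. In this paper only $s\le s_m<1$ is actually used, where the issue does not arise, but I would handle the general case anyway. For $s\in\mathbb N$ with $s\ge1$, I would instead use the classical Poincaré inequality for compactly supported functions: $\|u\|_{L^2}\le C\|\nabla u\|_{L^2}$, iterated to higher derivatives. Combined with Plancherel, this shows $\|u\|_{L^2}\lesssim\|(-\Delta)^{s/2}u\|_{L^2}$ still holds.

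The case $s=0$ forces $r=0$, and the inequality is then trivial.

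An alternative that avoids the case split is to apply Lemma~\ref{lm21} with a non-integer exponent $s'\in(0,s)$, which also handles integer $s$. That case is then reduced by the interpolation inequality $\|(-\Delta)^{s'/2}u\|_{L^2}\le\|u\|_{L^2}^{1-s'/s}\|(-\Delta)^{s/2}u\|_{L^2}^{s'/s}$, followed by absorbing the $\|u\|_{L^2}$ factor.
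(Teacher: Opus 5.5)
Your proof is correct. The paper gives no proof of this lemma at all; it only quotes it from \cite{RailoZimmermann2022}. So the comparison is really between your self-contained argument and that citation.

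Your argument reduces everything to the $L^2$ Poincar\'e inequality of Lemma~\ref{lm21}, via Plancherel and a low/high frequency split:
\begin{equation*}
\|(-\Delta)^{r/2}u\|_{L^2}^2\le\|(-\Delta)^{s/2}u\|_{L^2}^2+\|u\|_{L^2}^2\le(1+C_P^2)\|(-\Delta)^{s/2}u\|_{L^2}^2 .
\end{equation*}
With the paper's $2\pi$ Fourier convention, split at $2\pi|\xi|=1$ instead of $|\xi|=1$; as you say, this only changes constants. Two further points hold up:
\begin{itemize}
\item The density reduction is unnecessary, since the Plancherel computation holds for every $u\in H^s(\mathbb R^n)$, and $\widetilde H^s(\Omega)\subset H^s_{\overline\Omega}$ gives the compact support that Lemma~\ref{lm21} needs with $K=\overline\Omega$.
\item Your handling of integer $s$ is sound, by either route. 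Iterating the classical Poincar\'e inequality uses $\sum_{j_1,\dots,j_k}\|\partial_{j_1}\cdots\partial_{j_k}u\|_{L^2}^2=\|(-\Delta)^{k/2}u\|_{L^2}^2$. Applying Lemma~\ref{lm21} at a non-integer $s'<s$ and then interpolating lets you absorb $\|u\|_{L^2}^{1-s'/s}$.
\end{itemize}

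This is the same toolkit the paper itself uses elsewhere. The frequency split is the one in the appendix proof of Lemma~\ref{lm23}, and the interpolation inequality is the one in the proof of Lemma~\ref{lm24}. So your argument makes the paper more self-contained at essentially no cost.

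What the citation buys is generality. The cited work targets Poincar\'e-type inequalities on possibly unbounded domains, for example domains bounded in only one direction. There $u\in\widetilde H^s(\Omega)$ need not have compact support, so Lemma~\ref{lm21} is unavailable and a different argument is needed. Your proof uses boundedness of $\Omega$ essentially, through Lemma~\ref{lm21}, which is exactly what the present statement assumes.
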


Throughout the paper, $\Omega\subset\mathbb R^n$ denotes a bounded Lipschitz domain, and its exterior is denoted by
\begin{equation*}
\Omega_e:=\mathbb R^n\setminus\overline{\Omega}.
\end{equation*}
For an arbitrary nonempty open set $W\subset\Omega_e$, we may define boundary measurements supported in $W$.

\subsection{Bilinear form and well-posedness}
For $F \in (\widetilde{H}^{s_m}(\Omega))^*$ (the dual space of $H^{s_m}(\Omega)$), $g \in H^{s_m}(\mathbb{R}^n)$, let us consider the following Dirichlet problem
\begin{equation}
\label{eq21}
\begin{aligned}
(\sum_{k=1}^m (-\Delta)^{s_k} + b\cdot\nabla + q)u &= f\  \text{in}\ \Omega,\\
&u - g \in \widetilde{H}^s(\Omega).
\end{aligned}
\end{equation}

Given an arbitrary non-empty open set $\mathcal{O} \subset \mathbb{R}^n$ and $v,w \in L^2(\mathcal{O})$, we use the notation
\begin{equation*}
(v,w)_{\mathcal{O}} := \int_{\mathcal{O}} v w \, dx.
\end{equation*}

For $v,w \in C_c^\infty(\mathbb{R}^n)$ we define the bilinear form $B_{b,q}(\cdot,\cdot)$ by
\begin{equation}
\label{eq22}
B_{b,q}(v,w) := \sum_{k=1}^m ((-\Delta)^{s_k/2} v, (-\Delta)^{s_k/2} w)_{\mathbb{R}^n}
+ (b\cdot \nabla v, w)_\Omega + (q v, w)_\Omega.
\end{equation}

Meanwhile, we introduce the adjoint bilinear form as
\begin{equation}
\label{eq23}
B_{b,q}^*(v^*, w^*) := \sum_{k=1}^m ((-\Delta)^{s_k/2} v^*, (-\Delta)^{s_k/2} w^*)_{\mathbb{R}^n}+ (b v^*, \nabla w^*)_\Omega + (q v^*, w^*)_\Omega,
\end{equation}
for $v^*, w^* \in C_c^\infty(\mathbb{R}^n)$.

\begin{lemma}[Boundedness]\label{lm23}
For $n \ge 1$, let $\Omega \subset \mathbb{R}^n$ be a bounded Lipschitz domain and $\frac12 < s_m < 1$. Assume that $b \in W^{1-s_m,\infty}(\Omega)^n$, $q \in L^\infty(\Omega)$. Let $B_{b,q}(\cdot,\cdot)$ be the bilinear and the adjoint bilinear form defined by \eqref{eq22}, respectively. Then, $B_{b,q}(\cdot,\cdot)$ extend as bounded bilinear forms on $H^{s_m}(\mathbb{R}^n) \times H^{s_m}(\mathbb{R}^n)$.
\end{lemma}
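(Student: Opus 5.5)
We treat the three pieces of $B_{b,q}(v,w)$ in \eqref{eq22} separately for $v,w\in C_c^\infty(\mathbb R^n)$, bound each by $C\|v\|_{H^{s_m}(\mathbb R^n)}\|w\|_{H^{s_m}(\mathbb R^n)}$, and conclude by density of $C_c^\infty(\mathbb R^n)$ in $H^{s_m}(\mathbb R^n)$. The adjoint form \eqref{eq23} is handled the same way, since $(bv^*,\nabla w^*)_\Omega$ is the drift term with the roles of the two arguments exchanged.

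\emph{Principal part.} For each $k$, Cauchy--Schwarz together with Plancherel gives
\begin{equation*}
\bigl|((-\Delta)^{s_k/2}v,(-\Delta)^{s_k/2}w)_{\mathbb R^n}\bigr|\le \bigl\||\xi|^{s_k}\widehat v\bigr\|_{L^2}\bigl\||\xi|^{s_k}\widehat w\bigr\|_{L^2}\le \|v\|_{H^{s_k}}\|w\|_{H^{s_k}}\le \|v\|_{H^{s_m}}\|w\|_{H^{s_m}},
\end{equation*}
because $|\xi|^{2s_k}\le(1+|\xi|^2)^{s_k}\le(1+|\xi|^2)^{s_m}$. Summing over $k=1,\dots,m$ bounds the principal part. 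Lemmas \ref{lm21}--\ref{lm22} are not needed here, since we work with full inhomogeneous norms.

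\emph{Potential term.} We have $|(qv,w)_\Omega|\le\|q\|_{L^\infty}\|v\|_{L^2}\|w\|_{L^2}\le\|q\|_{L^\infty}\|v\|_{H^{s_m}}\|w\|_{H^{s_m}}$.

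\emph{Drift term (main obstacle).} Here $\nabla v\in H^{s_m-1}(\mathbb R^n)$ has negative regularity, so we use duality. Extend $b$ to $\mathbb R^n$ (by a bounded extension operator, or by reading the hypothesis as $b\in W^{1-s_m,\infty}(\mathbb R^n)^n$ restricted to $\Omega$), and let $\chi_\Omega$ be the indicator function. Then
\begin{equation*}
|(b\cdot\nabla v,w)_\Omega| = |\langle \partial_j v,\ b_j\chi_\Omega w\rangle|\le \|\nabla v\|_{H^{s_m-1}(\mathbb R^n)}\,\|b_j\chi_\Omega w\|_{H^{1-s_m}(\mathbb R^n)}.
\end{equation*}
Since $\|\nabla v\|_{H^{s_m-1}}\le C\|v\|_{H^{s_m}}$, it remains to establish two facts.

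First, since $0<1-s_m<\frac12$, multiplication by $\chi_\Omega$ is bounded on $H^{1-s_m}(\mathbb R^n)$ for Lipschitz $\Omega$. This is the standard result that $\widetilde H^t(\Omega)=H^t(\Omega)$ for $|t|<1/2$, and it is exactly where $s_m>\frac12$ enters.

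Second, multiplication by $b_j\in W^{1-s_m,\infty}$ is bounded on $H^{1-s_m}$. We would prove this through a Kato--Ponce/fractional Leibniz estimate,
\begin{equation*}
\|b_jw\|_{H^{t}}\le C\bigl(\|b_j\|_{L^\infty}\|w\|_{H^t}+\|(1-\Delta)^{t/2}b_j\|_{L^\infty}\|w\|_{L^2}\bigr),\qquad t=1-s_m,
\end{equation*}
or equivalently by interpolation between $t=0$ (multiplier $L^\infty$) and $t=1$ (multiplier $W^{1,\infty}$). We expect this step to need the most care, because the endpoint $p=\infty$ in the Leibniz rule is delicate. If it fails, the fallback is a Gagliardo-seminorm argument: split $b(x)w(x)-b(y)w(y)=b(x)(w(x)-w(y))+w(y)(b(x)-b(y))$ and interpret $W^{1-s_m,\infty}$ as bounded functions that are H\"older of order slightly above $1-s_m$.

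Combining the three estimates gives
\begin{equation*}
|B_{b,q}(v,w)|\le C\bigl(1+\|b\|_{W^{1-s_m,\infty}}+\|q\|_{L^\infty}\bigr)\|v\|_{H^{s_m}}\|w\|_{H^{s_m}},
\end{equation*}
and the unique bounded extension to $H^{s_m}(\mathbb R^n)\times H^{s_m}(\mathbb R^n)$ follows by density.
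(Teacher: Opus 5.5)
Your proposal is correct and follows essentially the same route as the paper's appendix proof. The principal and potential terms are bounded directly; you use $|\xi|^{2s_k}\le(1+|\xi|^2)^{s_m}$ where the paper splits frequencies at $|\xi|=1$, a cosmetic difference. The drift term is handled by the same ingredients: the $H^{s_m-1}$--$H^{1-s_m}$ duality, $\widetilde H^{1-s_m}(\Omega)=H^{1-s_m}(\Omega)$ for $1-s_m<\frac12$, an extension of $b$ to $\mathbb R^n$, the Kato--Ponce multiplier property of $W^{1-s_m,\infty}$, and the embedding $H^{s_m}\hookrightarrow H^{1-s_m}$. The Kato--Ponce form you wrote, with $L^\infty$ norms on the factors and the product in $L^2$, is a known valid case (Grafakos--Oh), so your H\"older-space fallback is unnecessary; it would also strengthen the hypothesis on $b$.
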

\begin{proof}
See the Appendix.
\end{proof}

Based on the above definition, \eqref{eq22}, \eqref{eq23}, as well as Lemma \ref{lm23}, we can obtain
\begin{equation}
\label{eq28}
B_{b,q}(v,w)=B_{b,q}^*(w,v),\ \text{for any}\ v,\ w\in H^{s_m}(R^n).
\end{equation}

\begin{lemma}[G\aa rding inequality]\label{lm24}
There exist constants $c_0>0$ and $\mu\ge 0$ such that for all $u \in \widetilde H^{s_m}(\Omega)$,
\begin{equation*}
B_{b,q}(u,u) + \mu \|u\|_{L^2(\mathbb R^n)}^2 \ge c_0 \|u\|_{H^{s_m}(\mathbb R^n)}^2.
\end{equation*}
\end{lemma}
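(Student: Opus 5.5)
The plan is to split $B_{b,q}(u,u)$ into its principal part and its lower order part. The principal part
\begin{equation*}
\sum_{k=1}^m \|(-\Delta)^{s_k/2}u\|_{L^2(\mathbb R^n)}^2
\end{equation*}
is bounded below by its last term $\|(-\Delta)^{s_m/2}u\|_{L^2}^2$, since every summand is nonnegative. For $u\in\widetilde H^{s_m}(\Omega)$, Lemma~\ref{lm21} with $K=\overline\Omega$ gives $\|u\|_{L^2}\le C\|(-\Delta)^{s_m/2}u\|_{L^2}$. Combining this with the Fourier characterization $\|u\|_{H^{s_m}}^2\simeq \|u\|_{L^2}^2+\|(-\Delta)^{s_m/2}u\|_{L^2}^2$, we obtain $\|(-\Delta)^{s_m/2}u\|_{L^2}^2\ge c_1\|u\|_{H^{s_m}(\mathbb R^n)}^2$ with $c_1=c_1(n,s_m,\Omega)>0$.

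The potential term is handled directly: $|(qu,u)_\Omega|\le \|q\|_{L^\infty}\|u\|_{L^2}^2$. It will be absorbed into $\mu$ at the end.

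The main obstacle is the drift term $(b\cdot\nabla u,u)_\Omega$, because $b$ is only in $W^{1-s_m,\infty}$. The goal is an estimate of the form
\begin{equation*}
|(b\cdot\nabla u,u)_\Omega|\le C_b\|u\|_{H^{s_m}}\|u\|_{H^{\sigma}}
\end{equation*}
for some $\sigma<s_m$. I would proceed by duality. Write $\nabla u\in H^{s_m-1}$, and view the term as the pairing of $\nabla u$ with $bu$ in $H^{1-s_m}$. The boundedness argument of Lemma~\ref{lm23} (Appendix) gives the multiplier estimate $\|bu\|_{H^{1-s_m}}\lesssim\|b\|_{W^{1-s_m,\infty}}\|u\|_{H^{1-s_m}}$. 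Because $s_m>\tfrac12$, we have $1-s_m<s_m$, so this yields
\begin{equation*}
|(b\cdot\nabla u,u)_\Omega|\le C\|b\|_{W^{1-s_m,\infty}}\|u\|_{H^{s_m}}\|u\|_{H^{1-s_m}}.
\end{equation*}
If that multiplier estimate is not available exactly in this form, the fallback is to use it with $H^{1-s_m+\epsilon}$ for a small $\epsilon>0$ such that $1-s_m+\epsilon<s_m$. The strict inequality $\sigma<s_m$ is all that matters for the next step.

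Next, interpolation between $L^2$ and $H^{s_m}$ gives $\|u\|_{H^{\sigma}}\le \|u\|_{L^2}^{1-\theta}\|u\|_{H^{s_m}}^{\theta}$ with $\theta=\sigma/s_m<1$. Young's inequality then gives, for every $\delta>0$,
\begin{equation*}
C\|u\|_{H^{s_m}}\|u\|_{H^\sigma}\le \delta\|u\|_{H^{s_m}}^2+C_\delta\|u\|_{L^2}^2 .
\end{equation*}

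Finally, choose $\delta=c_1/2$ and collect the pieces:
\begin{equation*}
B_{b,q}(u,u)\ge \tfrac{c_1}{2}\|u\|_{H^{s_m}}^2-(C_\delta+\|q\|_{L^\infty})\|u\|_{L^2}^2 .
\end{equation*}
This is the claimed G\aa rding inequality with $c_0=c_1/2$ and $\mu=C_\delta+\|q\|_{L^\infty}$.

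The estimates are first derived for $u\in C_c^\infty(\Omega)$. They pass to all of $\widetilde H^{s_m}(\Omega)$ by density, using the continuity of $B_{b,q}$ on $H^{s_m}(\mathbb R^n)\times H^{s_m}(\mathbb R^n)$ from Lemma~\ref{lm23}.
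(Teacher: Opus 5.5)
Your proposal is correct and follows essentially the same route as the paper's proof. Both bound the principal part below by $\|(-\Delta)^{s_m/2}u\|_{L^2}^2$ and control $\|u\|_{H^{s_m}}$ through the Poincar\'e inequality (Lemma~\ref{lm21}). Both then bound the drift term by $C\|b\|_{W^{1-s_m,\infty}}\|u\|_{H^{1-s_m}}\|u\|_{H^{s_m}}$ via the multiplier estimate from Lemma~\ref{lm23}, interpolate between $L^2$ and $H^{s_m}$ with $\theta=(1-s_m)/s_m$, and absorb using Young's inequality. Your density step from $C_c^\infty(\Omega)$ is a harmless addition, and the $H^{1-s_m+\epsilon}$ fallback is not needed, since the paper relies on exactly the multiplier estimate you state.
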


\begin{proof}
See the Appendix.
\end{proof}

The following proposition establishes the well-posedness of the Dirichlet problem for the multi-fractional operator with drift, up to a discrete set of exceptional frequencies.

\begin{lemma}\label{prop:wellposed}
Let $\Omega\subset\mathbb R^n$ be a bounded Lipschitz domain, $n\ge1$, $0<s_1<\cdots<s_m$ with $s_m>\frac12$, $b\in W^{1-s_m,\infty}(\Omega)^n$, and $q\in L^\infty(\Omega)$. There exists a countable set $\Sigma\subset\mathbb R$ (the set of Dirichlet eigenvalues of the operator $\sum_{k=1}^m(-\Delta)^{s_k}+b\cdot\nabla+q$) such that the following hold:

(a) For every $\lambda\notin\Sigma$, every $F\in(\widetilde H^{s_m}(\Omega))^*$, and every $g\in H^{s_m}(\mathbb R^n)$, there exists a unique $u\in H^{s_m}(\mathbb R^n)$ satisfying
\begin{equation*}
B_{b,q}(u,w)-\lambda (u,w)_\Omega = F(w) \quad\forall\, w\in\widetilde H^{s_m}(\Omega),\qquad u-g\in\widetilde H^{s_m}(\Omega).
\end{equation*}
Moreover, there is a constant $C>0$, independent of $F$ and $f$, such that
\begin{equation*}
\|u\|_{H^{s_m}(\mathbb R^n)} \le C\left(\|F\|_{(\widetilde H^{s_m}(\Omega))^*}+\|f\|_{H^{s_m}(\mathbb R^n)}\right).
\end{equation*}

(b) The same statement holds for the adjoint problem with the bilinear form $B_{b,q}^*$; the exceptional set is the same $\Sigma$.

(c) If $\lambda=0\notin\Sigma$ (i.e., Assumption \ref{ass1} holds), then for every $F$ and $g$ the solutions exist uniquely; in particular, the homogeneous equation with zero exterior data has only the trivial solution.
\end{lemma}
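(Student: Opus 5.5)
The plan is the standard Fredholm argument: reduce to zero exterior data, use the G\aa rding inequality (Lemma~\ref{lm24}) together with Lax--Milgram for a shifted problem, and then apply the analytic Fredholm alternative through a compact resolvent.

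\emph{Reduction to zero exterior data.} Write $u=g+v$ with $v\in\widetilde H^{s_m}(\Omega)$. The problem becomes
\[
B_{b,q}(v,w)-\lambda(v,w)_\Omega=\widetilde F(w),\qquad \widetilde F(w):=F(w)-B_{b,q}(g,w)+\lambda(g,w)_\Omega .
\]
By Lemma~\ref{lm23}, $\widetilde F\in(\widetilde H^{s_m}(\Omega))^*$ and
\[
\|\widetilde F\|\le \|F\|+C(1+|\lambda|)\|g\|_{H^{s_m}(\mathbb R^n)}.
\]
So it suffices to treat $g=0$. The quantity written as ``$f$'' in the estimate of (a) should be read as $g$.

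\emph{Compact resolvent.} By Lemma~\ref{lm24}, the form $B_{b,q}(v,w)+\mu(v,w)_\Omega$ is bounded and coercive on $\widetilde H^{s_m}(\Omega)$. Here $\|v\|_{L^2(\mathbb R^n)}=\|v\|_{L^2(\Omega)}$ because $v$ vanishes outside $\overline\Omega$. Lax--Milgram then gives a bounded solution operator
\[
G_\mu:(\widetilde H^{s_m}(\Omega))^*\to\widetilde H^{s_m}(\Omega).
\]
Restricting it to $L^2(\Omega)$ and composing with the inclusion $\widetilde H^{s_m}(\Omega)\hookrightarrow L^2(\Omega)$ gives an operator $K:=\iota\circ G_\mu$ on $L^2(\Omega)$. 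This inclusion is compact by Rellich--Kondrachov, since $\Omega$ is bounded and $s_m>0$, so $K$ is compact. The equation with $g=0$ is equivalent to
\[
v=G_\mu\bigl(\widetilde F+(\lambda+\mu)v\bigr),
\]
that is, to
\[
\bigl(I-(\lambda+\mu)K\bigr)v=G_\mu\widetilde F \quad\text{in } L^2(\Omega).
\]
Note that the right side lies in $\widetilde H^{s_m}(\Omega)$, so any $L^2$ solution is automatically in $\widetilde H^{s_m}(\Omega)$.

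\emph{Fredholm alternative and the exceptional set.} Define $\Sigma:=\{\lambda:\ 1/(\lambda+\mu)\in\sigma(K)\setminus\{0\}\}$. The operator $K$ is compact, so its nonzero spectrum is a countable set of eigenvalues whose only possible accumulation point is $0$. Hence $\Sigma$ is countable and discrete, and it is exactly the set of $\lambda$ for which the homogeneous problem has a nontrivial solution, i.e.\ the Dirichlet eigenvalues. (The definition should be read with complex $\lambda$ and then intersected with $\mathbb R$; this causes no difficulty.) For $\lambda\notin\Sigma$, the operator $I-(\lambda+\mu)K$ is injective, so by the Fredholm alternative it is invertible with bounded inverse. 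This yields existence, uniqueness, and the estimate
\[
\|v\|_{H^{s_m}}\le C\bigl(\|\widetilde F\|+\|v\|_{L^2}\bigr)\le C\|\widetilde F\|,
\]
where the first inequality comes from G\aa rding and the second from the bounded inverse in $L^2$.

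\emph{Adjoint problem and part (c).} For (b), repeat the argument with $B^*_{b,q}$, which by \eqref{eq28} is the transpose of $B_{b,q}$. The associated operator is $K^*$, the $L^2$-adjoint of $K$, and $\sigma(K^*)=\overline{\sigma(K)}$. Since the forms are real, the exceptional set is the same $\Sigma$. Part (c) is the case $\lambda=0$: Assumption~\ref{ass1} states precisely that $0\notin\Sigma$.

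\emph{Main obstacle.} The only nontrivial point is checking that G\aa rding applies with the $L^2(\Omega)$ shift. The drift term must be controlled by $\epsilon\|v\|_{H^{s_m}}^2+C_\epsilon\|v\|_{L^2}^2$ using $b\in W^{1-s_m,\infty}$. This is already contained in Lemma~\ref{lm24}, so I would simply invoke it; everything else is standard functional analysis.
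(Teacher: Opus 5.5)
Your proposal is correct and follows essentially the same route as the paper's proof. Both reduce to $g=0$, apply Lax--Milgram to the shifted coercive form $B_{b,q}+\mu(\cdot,\cdot)_\Omega$, build a compact operator on $L^2(\Omega)$, define $\Sigma$ through Riesz--Schauder theory, and pass to the $L^2$-adjoint for part (b). Your version is in fact slightly cleaner: you state the invertibility condition correctly as $1/(\lambda+\mu)\notin\sigma(K)$, read the stray $f$ as $g$, and derive the a priori estimate explicitly from G\aa rding plus the bounded inverse, none of which the paper does.
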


\begin{proof}
See the Appendix.
\end{proof}

By Lemma~\ref{prop:wellposed}, for each exterior datum \(g\in H^{s_m}(\mathbb R^n)\) there exists a unique solution \(u_g\) of the homogeneous problem
\begin{equation}
\label{eq29}
\begin{cases}
(\sum_{k=1}^m (-\Delta)^{s_k} + b\cdot\nabla + q) u = 0 & \text{in } \Omega,\\
u = g & \text{in } \Omega_e.
\end{cases}
\end{equation}
For the non-homogeneous case \(f\in L^2(\Omega)\), the same proposition (with \(F(w)=\int_\Omega f w\,dx\)) ensures a unique solution \(u\) of
\begin{equation}
\label{eq210}
\begin{cases}
(\sum_{k=1}^m (-\Delta)^{s_k} + b\cdot\nabla + q) u = f & \text{in } \Omega,\\
u = g & \text{in } \Omega_e.
\end{cases}
\end{equation}
By linearity, \(u = u_g + u_f\), where \(u_f\) solves the zero-exterior-data problem
\begin{equation}
\label{eq211}
\begin{cases}
(\sum_{k=1}^m (-\Delta)^{s_k} + b\cdot\nabla + q)u_f = f & \text{in } \Omega,\\
u_f = 0 & \text{in } \Omega_e.
\end{cases}
\end{equation}

\begin{definition}[DtN map]
\label{def1}
Let $\Omega \subset \mathbb{R}^n$, $n \geq 1$, be a bounded Lipschitz domain, let $s_m>\frac{1}{2}$ and let $b \in W^{1-s_m,\infty}(\Omega)^n$, $q \in L^\infty(\Omega)$.
\begin{equation*}
\langle \Lambda_{b,q} f,\, g\rangle_{X^*\times X}
:= B_{b,q}(u_g,\, h),\qquad  g,  h\in  H^{s_m}(\mathbb{R}^n) ,
\end{equation*}
where $X$ is the abstract trace space $X = H^{s_m}(\mathbb{R}^n) / \widetilde H^{s_m}(\Omega)$, $u_g\in H^{s_m}(\mathbb{R}^n)$ solves $((-\Delta)^s+b\nabla + q)u = 0$ in $\Omega$ with $u-f \in \widetilde{H}^s(\Omega)$.
\end{definition}

\begin{remark}In addition, we use the definition \ref{def1} can be calculated
\begin{equation}
\label{eq212}
\begin{aligned}
(\Lambda_{b,q}[g],[h]) &= B_{b,q}(u_g,h)\\
&= \int_{\mathbb{R}^n} \sum_{k=1}^{m}(-\Delta)^{s_k/2}u_g (-\Delta)^{s_k/2}h \, dx
+ \int_{\Omega} b\cdot \nabla u_g \, h \, dx
 + \int_{\Omega} q\, u_g \, h \, dx\\
&= \int_{\Omega_e} h \sum_{k=1}^{m}(-\Delta)^{s_k} u_g \, dx,
\end{aligned}
\end{equation}
Then from \eqref{eq212}, one can formally obtain that
\begin{equation*}
\Lambda_{b,q}[g] =  \sum_{k=1}^{m}(-\Delta)^{s_k}u_g\big|_{\Omega_e}.
\end{equation*}
 We remark that in order to make this calculation rigorous, higher regularity assumptions have to be imposed. We refer to Section 3, Lemma 3.1, in [20] for details on this.
\end{remark}

Next, we introduce the adjoint operator
\begin{equation}
\label{eq213}
\begin{aligned}
\sum_{k=1}^{m}(-\Delta)^{s_k} u^* - \nabla\cdot (b u^*) + &q u^* = 0\  \text{in}\ \Omega,\\
&u^* - g \in \widetilde{H}^{s_m}(\Omega),
\end{aligned}
\end{equation}
we can define the DtN map \(\Lambda_{b,q}^*\) of the adjoint bilinear form \eqref{eq213}, i.e.,
\begin{equation*}
\Lambda_{b,q}^* : X \to X^*, \qquad (\Lambda_{b,q}^*[g],[h]) = B_{b,q}^*(u_g^*, h).
\end{equation*}
Here \(u_g^*\in H^{s_m}(\mathbb{R})^n\) is a weak solution of \eqref{eq213} with exterior data \(g\).

Let \(\Lambda_{b,q}^*\) denote the DtN map associated with the adjoint equation \(L_{b,q}^*v=0\) in \(\Omega\), defined analogously via \(B_{b,q}^*\).

\begin{lemma}\label{lm25}
Let \(\Omega \subset \mathbb{R}^n\), \(n\geq 1\), be a bounded, non-empty open Lipschitz set, \(s_m>\frac12\), \(b\in W^{1-s_m,\infty}(\Omega)^n\), \(q\in L^\infty(\Omega)\) and assumption \ref{ass1} holds. Let \(B_{b,q}(\cdot,\cdot)\), \(B_{b,q}^*(\cdot,\cdot)\) and \(\Lambda_{b,q}\), \(\Lambda_{b,q}^*\) be given as above. Then, for all \(g,h\in X\)
\begin{equation*}
\langle \Lambda_{b,q} g,\, h\rangle
= \langle g,\, \Lambda_{b,q}^* h\rangle.
\end{equation*}
\end{lemma}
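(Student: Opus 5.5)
We use the solution-independence of the DtN pairing together with the symmetry relation \eqref{eq28}. Fix representatives $g,h\in H^{s_m}(\mathbb R^n)$ of the classes in $X$. Let $u_g$ be the solution of \eqref{eq29} with $u_g-g\in\widetilde H^{s_m}(\Omega)$, and let $u_h^*$ be the solution of the adjoint problem \eqref{eq213} with $u_h^*-h\in\widetilde H^{s_m}(\Omega)$. Both exist and are unique by Lemma~\ref{prop:wellposed}(a)--(c), since Assumption~\ref{ass1} gives $0\notin\Sigma$, and $\Sigma$ is the same for the adjoint problem.

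\textbf{Step 1: well-definedness on $X$.} We first show that $B_{b,q}(u_g,h)$ depends only on the class $[h]$. If $h'-h\in\widetilde H^{s_m}(\Omega)$, then $B_{b,q}(u_g,h'-h)=0$, because $u_g$ is a weak solution. The solution $u_g$ itself depends only on $[g]$, by uniqueness. The analogous statements hold for $\Lambda^*_{b,q}$, using the weak formulation of $B^*_{b,q}$. This is a routine check.

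\textbf{Step 2: replace the second argument by the solution.} Since $u_h^*-h\in\widetilde H^{s_m}(\Omega)$ and $u_g$ solves the equation weakly,
\[
\langle\Lambda_{b,q}g,h\rangle=B_{b,q}(u_g,h)=B_{b,q}(u_g,u_h^*).
\]
In the same way, since $u_g-g\in\widetilde H^{s_m}(\Omega)$ and $u_h^*$ solves the adjoint equation weakly, i.e. $B^*_{b,q}(u_h^*,w)=0$ for all $w\in\widetilde H^{s_m}(\Omega)$,
\[
\langle g,\Lambda^*_{b,q}h\rangle=B^*_{b,q}(u_h^*,g)=B^*_{b,q}(u_h^*,u_g).
\]

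\textbf{Step 3: conclude.} By \eqref{eq28}, $B_{b,q}(u_g,u_h^*)=B^*_{b,q}(u_h^*,u_g)$, and combining this with Step 2 gives the identity.

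\textbf{Main obstacle.} The delicate point is justifying \eqref{eq28} and the adjoint weak formulation for $H^{s_m}$ functions, since the drift term is only defined by density. For $v,w\in C_c^\infty(\mathbb R^n)$, the identity $(b\cdot\nabla v,w)_\Omega=(bw,\nabla v)_\Omega$ is trivial: it is just a rearrangement of the integrand, with no integration by parts. To pass to $H^{s_m}(\mathbb R^n)$, we use the boundedness in Lemma~\ref{lm23} for both forms, which relies on $s_m>\tfrac12$ and $b\in W^{1-s_m,\infty}$ so that $b\cdot\nabla v\in H^{s_m-1}$ pairs with $w\in H^{1-s_m}$. Density of $C_c^\infty$ in $H^{s_m}(\mathbb R^n)$ then extends the identity. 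We must also make sure the weak adjoint problem \eqref{eq213} is interpreted exactly as $B^*_{b,q}(u^*,w)=0$ for $w\in\widetilde H^{s_m}(\Omega)$, which is consistent with $-\nabla\cdot(bu^*)$ tested against $w$.
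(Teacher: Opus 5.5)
Your proof is correct and follows the same route as the paper: you replace $h$ by the adjoint solution $u_h^*$ and $g$ by $u_g$ using the weak formulations, then apply the transposition identity \eqref{eq28} to get $B_{b,q}(u_g,u_h^*)=B_{b,q}^*(u_h^*,u_g)$. Your version also spells out two things the paper's one-line chain $\langle \Lambda_{b,q} g,h\rangle=B_{b,q}(u_g,u_h)=B^*_{b,q}(u_h^*,u_g)$ leaves implicit: the substitution steps (via $u_h^*-h,\ u_g-g\in\widetilde H^{s_m}(\Omega)$) and the density argument behind \eqref{eq28}.
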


\begin{proof}
Let \(u_h\) be the solution of \eqref{eq213}. Using the definition of the DtN maps and the symmetry of the bilinear forms, we obtain
\begin{equation*}
\langle \Lambda_{b,q} g,\, h\rangle
= B_{b,q}(u_g,\,u_h)
= B_{b,q}^*(u^*_h,\,u_g)
= \langle \Lambda_{b,q}^* h,\, g\rangle
= \langle g,\, \Lambda_{b,q}^* h\rangle,
\end{equation*}
where the last equality holds in the duality pairing \(X^*\times X\).
\end{proof}
Next, we present the strong unique continuation principle for homogeneous multi-fractional operators.
\begin{lemma}[UCP for multi-fractional operators \cite{LinLiuLo2025}]
\label{lm26}
Let \(u \in H^{s_m}(\mathbb{R}^n) \cap C_c^\infty(\mathbb{R}^n)\). Assume that for some non-empty open set $W\subset R^n$ we have
\begin{equation*}
u =0\ \text{and}\ \sum_{k=1}^m (-\Delta)^{s_k} u = 0,\quad u \ \text{in } W,
\end{equation*}
then \(u \equiv 0\) in \(\mathbb{R}^n\).
\end{lemma}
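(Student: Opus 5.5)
We reduce the multi-fractional problem to a single fractional Laplacian of higher order by a Caffarelli--Silvestre type extension. The cited result in \cite{LinLiuLo2025} appears to rely on exactly this route, combined with the fact that the symbols $|\xi|^{2s_k}$ have distinct homogeneities.

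Set $v:=\sum_{k=1}^m(-\Delta)^{s_k}u$. The hypotheses give $u|_W=0$ and $v|_W=0$. Since $u\in C_c^\infty(\mathbb R^n)$, its Fourier transform $\widehat u$ is Schwartz and entire. Hence $v\in H^{t}(\mathbb R^n)$ for every $t$, with $\widehat v(\xi)=p(|\xi|)\widehat u(\xi)$ and
\begin{equation*}
p(r)=\sum_{k=1}^m (2\pi r)^{2s_k}.
\end{equation*}
It therefore suffices to prove the following: if a compactly supported smooth $u$ vanishes on $W$ and $p(|D|)u$ also vanishes on $W$, then $u\equiv0$.

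The first approach is a direct reduction to the single-order UCP of Ghosh--Salo--Uhlmann \cite{GSU-2016}, which states that if $w\in H^{-r}$, $w|_W=(-\Delta)^s w|_W=0$ and $s\in(0,1)$, then $w\equiv 0$. Because $u$ vanishes on the open set $W$, every local operator applied to $u$ also vanishes there. We would therefore exploit the nonlocal kernel representation. Fix $x_0\in W$ and a ball $B\Subset W$. For $x\in B$,
\begin{equation*}
0=v(x)=-\sum_{k=1}^m c_{n,s_k}\int_{\mathbb R^n\setminus W}\frac{u(y)}{|x-y|^{n+2s_k}}\,dy .
\end{equation*}
Define
\begin{equation*}
K(x)=\int \frac{u(y)}{|x-y|^{n}}\,\phi(|x-y|)\,dy,\qquad \phi(\rho)=\sum_k c_{n,s_k}\rho^{-2s_k}.
\end{equation*}
This function is real-analytic in $x\in B$, since $y$ stays at positive distance from $B$. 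Vanishing on $B$ forces all derivatives at $x_0$ to vanish. Expanding in the Taylor/Gegenbauer series in $x-x_0$ produces the moment identities
\begin{equation*}
\sum_k c_{n,s_k}\,a_{k,\alpha}\int \frac{u(y)\,P_\alpha(y-x_0)}{|y-x_0|^{n+2s_k+2|\alpha|}}\,dy=0
\end{equation*}
for all multi-indices $\alpha$.

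These identities must then be separated across $k$. Our plan is to let $x_0$ vary in $B$ and use the Mellin transform in the radial variable $\rho=|y-x_0|$. Writing $U_{x_0,\omega}(\rho)$ for the spherical means of $u$ against harmonics, the conditions become statements that $\sum_k c_k\,\mathcal M[U](\cdot-2s_k-2|\alpha|)$ vanishes on an arithmetic family. The distinct exponents $2s_k$ together with analyticity of the Mellin transforms (compact support of $u$ away from $x_0$) should then force $\mathcal M[U]\equiv0$. A cleaner alternative is to work on the Fourier side. Since $u$ has compact support, $\widehat u$ is entire, and $p(|\xi|)$ is analytic away from $0$. One then shows that $v|_W=0$ and $u|_W=0$ imply $(-\Delta)^{s_m}u$ satisfies an elliptic identity on $W$, and concludes by the single-order UCP.

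We expect the separation of the different orders to be the main obstacle. A single kernel $|x-y|^{-n-2s}$ gives the classical argument, but the sum mixes homogeneities. The first thing to try is the Caffarelli--Silvestre extension of each $(-\Delta)^{s_k}$ to $\mathbb R^{n+1}_+$. The difference of homogeneity degrees $s_k$ should allow the extension functions to be separated by a scaling and Laplace-transform argument in the extension variable. One then obtains a Carleman or frequency-function unique continuation for the top order $s_m$, with lower orders treated as perturbations. Since $s_k<s_m$, these perturbations are of lower order, which is exactly the setting of \cite{LinLiuLo2025}. Once $u$ vanishes to infinite order at a boundary point of the extension, the Almgren frequency argument concludes $u\equiv0$.
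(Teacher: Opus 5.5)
The paper gives no proof of this lemma; it is imported from \cite{LinLiuLo2025}. Your proposal therefore has to stand on its own, and as written it is a list of strategies rather than a proof. Each route stops exactly at the step you call the main obstacle: getting from the single combined condition $\sum_k(-\Delta)^{s_k}u=0$ on $W$ to information about one order.

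Your Taylor/Gegenbauer identities are correct consequences of the analyticity of the kernel integral on $B$. But in those variables each order carries its own power $|y-x_0|^{-n-2s_k-2|\alpha|}$. So for each base point and harmonic you only obtain linear relations among values of one Mellin transform at $m$ shifted points. The claim that distinct exponents plus analyticity ``should force'' $\mathcal M[U]\equiv0$ is precisely the content of the lemma, and it is not argued. The Fourier-side ``elliptic identity'' is never specified.

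The extension route fails for a concrete reason. For the $s_m$-extension $\tilde u$ you have $\tilde u(\cdot,0)=0$ on $W$, but the weighted Neumann datum there is, up to a constant, $\sum_{k<m}(-\Delta)^{s_k}u$. That quantity is determined by $u$ on $\mathbb R^n\setminus W$ and is not controlled by any local norm of $\tilde u$ near a boundary point. Boundary Carleman estimates and Almgren monotonicity need the Neumann datum to be locally controlled by $\tilde u$, e.g.\ of the form $V\tilde u(\cdot,0)$. So $s_k<s_m$ does not make these terms perturbations, and no infinite-order vanishing is ever produced.

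The missing idea is that the orders need not be separated at all: in the spectral variable they share one positive weight. Balakrishnan's formula $\lambda^s=\frac{\sin \pi s}{\pi}\int_0^\infty \mu^{s-1}\frac{\lambda}{\lambda+\mu}\,d\mu$, together with $u=0$ on $W$, gives for $x\in W$
\begin{equation*}
\sum_{k=1}^m(-\Delta)^{s_k}u(x)=-\int_0^\infty \rho(\mu)\,(\mu-\Delta)^{-1}u(x)\,d\mu,\qquad \rho(\mu)=\sum_{k=1}^m\frac{\sin(\pi s_k)}{\pi}\,\mu^{s_k}>0 .
\end{equation*}
Up to the factor $2\pi i$, $\rho$ is the jump of $\sum_k\lambda^{s_k}$ across $(-\infty,0)$. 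Since $\Delta(\mu-\Delta)^{-1}u=\mu(\mu-\Delta)^{-1}u$ on $W$, applying $\Delta^j$ yields $\int_0^\infty\mu^j\rho(\mu)(\mu-\Delta)^{-1}u(x)\,d\mu=0$ for all $j\ge0$. These are exactly the spherical-mean members of your moment identities.

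Take a ball $B$ with $\operatorname{dist}(B,\operatorname{supp}u)\ge c_0>0$. The Bessel kernel gives $|(\mu-\Delta)^{-1}u(x)|\le C(1+\mu)^Ne^{-c_0\sqrt\mu}$ for large $\mu$, and the behaviour at $\mu=0$ is integrable. After substituting $\mu=t^2$, the even function $t\mapsto|t|\rho(t^2)(t^2-\Delta)^{-1}u(x)$ has all moments zero and a Fourier transform holomorphic in $|\operatorname{Im}\zeta|<c_0$, hence it vanishes. Thus $(\mu-\Delta)^{-1}u=0$ on $B$ for every $\mu>0$. The same formula for the single order $s_m$ then gives $(-\Delta)^{s_m}u=0$ on $B$, and the single-order UCP of \cite{GSU-2016} that you quote finishes the proof. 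What drives the result is the non-vanishing of this jump (antilocality), not the distinctness of the exponents.
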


\section{Approximation property}\label{sec3}
By using the above strong uniqueness property Lemma \ref{lm26} and a duality argument, we can derive the following Runge approximation property for solutions of the multi-fractional Schr\"odinger equation.

\begin{theorem}
\label{thm:runge}
For $n \ge 1$ and $s_m>\frac12$, let $\Omega \subset \mathbb{R}^n$ be a bounded open, non-empty Lipschitz set and $\Omega_1 \subset \mathbb{R}^n$ be an arbitrary open set such that $\Omega_1 \setminus \overline{\Omega} \neq \emptyset$.
(a) Let $b \in W^{1-s_m,\infty}(\Omega)^n$ and $q \in L^\infty(\Omega)$. Then, for any $h \in L^2(\Omega)$ and $\epsilon >0$, one can find a solution $u_\epsilon \in H^{s_m}(\mathbb{R}^n)$ of
\begin{equation*}
(\sum_{k=1}^m (-\Delta)^{s_k} + b\cdot \nabla + q)u_\epsilon = 0 \quad \text{in } \Omega,
\end{equation*}
with $\operatorname{supp}(u_\epsilon) \subset \overline{\Omega_1}$, such that
\begin{equation*}
\|u_\epsilon - h\|_{L^2(\Omega)} < \epsilon.
\end{equation*}
(b) If we further assume that $\Omega$ has a $C^\infty$-smooth boundary, $b \in C_c^\infty(\Omega)^n$ and $q \in C_c^\infty(\Omega)$ with $\operatorname{supp}(b), \operatorname{supp}(q) \Subset \Omega$, given any $h \in C^\infty(\overline{\Omega})$, $\epsilon >0$ and $k \in \mathbb{N}$, then there exists a solution $u_\epsilon \in H^{s_m}(\mathbb{R}^n)$ of
\begin{equation*}
(\sum_{k=1}^m (-\Delta)^{s_k} + b\cdot \nabla + q)u_\epsilon = 0 \quad \text{in } \Omega,
\end{equation*}
with $\operatorname{supp}(u_\epsilon) \subset \overline{\Omega_1}$, such that
\begin{equation*}
\| d^{-s}(x) u_\epsilon - h \|_{C^k(\Omega)} \le \epsilon.
\end{equation*}
The function $d(x)$ is any $C^\infty$-smooth function defined in $\overline{\Omega}$ such that $d>0$ in $\Omega$ and $d(x) = \operatorname{dist}(x,\partial\Omega)$ whenever $x$ is near $\partial\Omega$.
\end{theorem}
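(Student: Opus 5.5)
We prove (a) by the standard Hahn--Banach duality argument, following \cite{GSU-2016} and \cite{CekicLinRuland2020}. Fix an open set $W\Subset\Omega_1\setminus\overline\Omega$. By Assumption \ref{ass1} and Lemma \ref{prop:wellposed}, each $g\in C_c^\infty(W)$ gives a unique solution $u_g=g+v_g$ with $v_g\in\widetilde H^{s_m}(\Omega)$. Its support lies in $\overline\Omega\cup\overline W\subset\overline{\Omega_1}$. Set
\[
\mathcal R:=\{u_g|_\Omega: g\in C_c^\infty(W)\}\subset L^2(\Omega).
\]
It suffices to show that $\mathcal R$ is dense in $L^2(\Omega)$. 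By Hahn--Banach, this amounts to showing that any $\varphi\in L^2(\Omega)$ with $(u_g,\varphi)_\Omega=0$ for all $g$ must vanish.

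Given such a $\varphi$, we solve the adjoint problem: find $w\in\widetilde H^{s_m}(\Omega)$ with
\[
B^*_{b,q}(w,\psi)=(\varphi,\psi)_\Omega \quad\text{for all } \psi\in\widetilde H^{s_m}(\Omega).
\]
This is possible by Lemma \ref{prop:wellposed}(b), since the exceptional set $\Sigma$ is shared with the direct problem and $0\notin\Sigma$. We test with $\psi=v_g=u_g-g$ and use \eqref{eq28}. The drift and potential terms of $B_{b,q}(g,w)$ vanish because $g=0$ in $\Omega$, and $B_{b,q}(u_g,w)=0$ since $u_g$ solves the equation and $w\in\widetilde H^{s_m}(\Omega)$. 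Hence
\[
0=(\varphi,u_g)_\Omega=(\varphi,v_g)_\Omega=B_{b,q}(v_g,w)=B_{b,q}(u_g,w)-B_{b,q}(g,w)=-\sum_{k}\bigl((-\Delta)^{s_k/2}g,(-\Delta)^{s_k/2}w\bigr)_{\mathbb R^n}.
\]
Consequently $\bigl(\sum_k(-\Delta)^{s_k}w,\,g\bigr)=0$ for all $g\in C_c^\infty(W)$. So $w=0$ and $\sum_k(-\Delta)^{s_k}w=0$ in $W$, and Lemma \ref{lm26} gives $w\equiv0$, whence $\varphi=0$.

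The main obstacle is that Lemma \ref{lm26} is stated for $u\in H^{s_m}\cap C_c^\infty$, whereas $w$ is only in $\widetilde H^{s_m}(\Omega)$. I would invoke the general version from \cite{LinLiuLo2025}, which holds for $H^{s}$ functions. Alternatively, I would mollify: $w_\delta=w*\rho_\delta$ still vanishes together with $\sum_k(-\Delta)^{s_k}w_\delta$ on a slightly smaller $W'\subset W$, because convolution commutes with the operator. This gives $w_\delta\equiv0$, and letting $\delta\to0$ gives $w\equiv0$.

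For (b), I follow the argument of \cite{GSU-2016} and \cite{CekicLinRuland2020}. Since $b,q$ are smooth and compactly supported in $\Omega$, and $\partial\Omega$ is $C^\infty$, solutions with smooth exterior data should satisfy boundary regularity of the form $u_g\in d^{s}C^\infty(\overline\Omega)$. This follows from Grubb's $\mu$-transmission theory for the leading order $s_m$ term, treating the lower-order terms $(-\Delta)^{s_k}$, $k<m$, and $b\cdot\nabla+q$ as perturbations near $\partial\Omega$, where $b$ and $q$ vanish. The exponent in the statement should be understood as $s=s_m$.

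The density argument then runs as in (a), but with the Banach space $\mathcal E=\{d^{-s}u|_\Omega\}$ embedded in $C^k(\overline\Omega)$. A continuous functional on $C^k(\overline\Omega)$ is a distribution $\varphi$ of order $k$ supported in $\overline\Omega$. We solve the adjoint problem with right-hand side $d^{-s}\varphi$ in a suitable $H^{-r}$ framework and repeat the unique continuation step. Justifying this adjoint problem for such singular data, and controlling the regularity of the lower-order fractional terms near $\partial\Omega$, is the delicate point. As in \cite{GSU-2016}, I would handle it by first reducing to a dense subspace, approximating $h$ by functions of the form $d^{-s}P_\Omega g$, and then correcting the boundary behaviour.
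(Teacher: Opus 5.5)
Your part (a) is correct and is essentially the paper's proof of Lemma~\ref{lm32}: Hahn--Banach, the adjoint problem for $B^*_{b,q}$, and unique continuation from $W$. The paper annihilates with $F\in(\widetilde H^{s_m}(\Omega))^*$, so it obtains density in $\widetilde H^{s_m}(\Omega)$, which Section~\ref{sec4} needs. Your mollification remark fills a point the paper skips when it applies Lemma~\ref{lm26} to $\phi\in\widetilde H^{s_m}(\Omega)$.

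The gap is in (b), in two places. First, the density step is not carried out. A continuous functional on $C^k(\overline\Omega)$ is an order-$k$ distribution that may live on $\partial\Omega$ (e.g.\ $\partial^\alpha\delta_{x_0}$ with $x_0\in\partial\Omega$). So ``the adjoint problem with right-hand side $d^{-s}\varphi$'' is not defined in general, and you never specify a space on which $r_\Omega L_{b,q}$ is an isomorphism and $u\mapsto\varphi(d^{-s_m}u)$ is continuous. That is exactly what the paper's $\mu$-transmission spaces provide:
\begin{itemize}
\item $\mathcal E^{s_m}(\overline\Omega)$ carries the topology of the norms of $H^{s_m(\mu)}(\overline\Omega)$, so an annihilating functional is bounded by a single $\|\cdot\|_{H^{s_m(\ell)}}$ and extends by density.
\item Through the adjoint of the homeomorphism $r_\Omega L_{b,q}\colon H^{s_m(\ell)}(\overline\Omega)\to H^{\ell-2s_m}(\Omega)$, it is represented by some $v\in H^{2s_m-\ell}_{\overline\Omega}$.
\item Testing on $P_{b,q}g-g$ gives $v|_W=\sum_j(-\Delta)^{s_j}v|_W=0$, hence $v=0$; your mollification argument also covers this negative-order $v$.
\item The closed-graph homeomorphism $g\mapsto e_\Omega d^{s_m}g$ of $C^\infty(\overline\Omega)$ onto $\mathcal E^{s_m}(\overline\Omega)$ turns this into $C^k$ approximation.
\end{itemize}
Your fallback, approximating $h$ by $d^{-s}P_{b,q}g$ and then ``correcting the boundary behaviour'', presupposes the density it is meant to prove.

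Second, the regularity input $r_\Omega u_g\in d^{s_m}C^\infty(\overline\Omega)$ cannot be obtained by treating $(-\Delta)^{s_k}$, $k<m$, as perturbations. These terms do not map $e_\Omega d^{s_m}C^\infty(\overline\Omega)$ into $C^\infty(\overline\Omega)$, because $(-\Delta)^{s_k}$ has the $\mu$-transmission property only for $\mu\in s_k+\mathbb Z$. Concretely, in the half-line model, $(-\Delta)^{s_k}(\chi\,x_+^{s_m})=c\,x^{s_m-2s_k}+(\text{smoother terms})$ near $0^+$, with $c\neq0$ in general. Perturbing off $(-\Delta)^{s_m}$ therefore yields only finitely many orders of boundary regularity. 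A solution generically acquires a term $d^{s_m}\cdot d^{2(s_m-s_k)}$ whose coefficient is proportional to $d^{-s_m}u|_{\partial\Omega}$, so $d^{-s_m}u$ is not smooth up to $\partial\Omega$ unless this trace vanishes.

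The paper asserts the same regularity in Proposition~\ref{prop:Hsmu}. It justifies it only by `domination' of the top-order term and by \cite{GSU-2016}, which treats $m=1$. So this step is missing from both arguments. For $m\ge2$ with generic exponents it appears to fail outright, in which case (b) for large $k$ would need a different weighted space rather than just a proof.
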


\subsection{Approximation in $\widetilde{H}^{s_m}(\Omega)$}
Let $\Omega \subset \mathbb{R}^n$, $n \geq 1$, be a bounded, non-empty open Lipschitz set. Let $s_m >\frac12$ and assume that \ref{ass1} holds. Then, the Poisson operator $P_{b,q}$ associated with \eqref{eq29} is defined as
\begin{equation}
\label{eq31}
P_{b,q} : X \to H^{s_m}(\mathbb{R}^n), \qquad g \mapsto u_g,
\end{equation}
where $u_g \in H^{s_m}(\mathbb{R}^n)$ with $u_g - g \in \widetilde{H}^{s_m}(\Omega)$ denotes the unique solution to \eqref{eq29}.

\begin{lemma}\label{lm32}
\label{lem:Runge}
For $n \geq 1$ and $s_m >\frac12$, $b \in W^{1-s_m,\infty}(\Omega)^n$ and $q \in L^\infty(\Omega)$. Further let $W \subset \Omega_e$ be an arbitrary non-empty open set. Let
\begin{equation*}
\mathcal{D} \coloneqq \left\{ P_{b,q} g - g : g \in C_c^\infty(W) \right\}.
\end{equation*}
Then we have the following results:\\
(a) The set $\mathcal{D}$ is dense in $L^2(\Omega)$.\\
(b) The set $\mathcal{D}$ is dense in $\widetilde{H}^{s_m}(\Omega)$.
\end{lemma}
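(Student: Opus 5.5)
By Hahn--Banach, both density claims reduce to showing that the only functional annihilating $\mathcal D$ is zero. We then convert "annihilates $\mathcal D$" into an adjoint exterior problem whose solution vanishes together with its multi-fractional Laplacian on $W$, and conclude with Lemma~\ref{lm26}.

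\textbf{Part (a).} Let $v\in L^2(\Omega)$ satisfy $(v,P_{b,q}g-g)_\Omega=0$ for all $g\in C_c^\infty(W)$. Since $g$ vanishes on $\Omega$, this reads $(v,u_g)_\Omega=0$ with $u_g=P_{b,q}g$.

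Let $\phi\in\widetilde H^{s_m}(\Omega)$ be the solution of the adjoint problem
\begin{equation*}
B^*_{b,q}(\phi,w)=(v,w)_\Omega\qquad \forall\, w\in\widetilde H^{s_m}(\Omega).
\end{equation*}
It exists and is unique by Lemma~\ref{prop:wellposed}(b) with $\lambda=0$ under Assumption~\ref{ass1}; this is the point where we use that $\Sigma$ is the same for the problem and its adjoint.

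Take $w=u_g-g\in\widetilde H^{s_m}(\Omega)$ and use \eqref{eq28} to get
\begin{equation*}
0=(v,u_g-g)_\Omega=B^*_{b,q}(\phi,u_g-g)=B_{b,q}(u_g,\phi)-B_{b,q}(g,\phi).
\end{equation*}
The first term vanishes because $u_g$ solves \eqref{eq29} and $\phi\in\widetilde H^{s_m}(\Omega)$ is an admissible test function. In the second term, $g$ is supported in $W$, which is disjoint from $\overline\Omega$, while $b,q$ live only on $\Omega$. Hence the drift and potential terms drop out, and
\begin{equation*}
0=B_{b,q}(g,\phi)=\sum_{k=1}^m\bigl((-\Delta)^{s_k/2}g,(-\Delta)^{s_k/2}\phi\bigr)_{\mathbb R^n}=\Bigl\langle \sum_{k=1}^m(-\Delta)^{s_k}\phi,\ g\Bigr\rangle .
\end{equation*}
Since this holds for every $g\in C_c^\infty(W)$, we get $\sum_k(-\Delta)^{s_k}\phi=0$ in $W$. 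Also $\phi=0$ in $W$, because $W\subset\Omega_e$ and $\phi$ is supported in $\overline\Omega$.

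Lemma~\ref{lm26} then gives $\phi\equiv0$, and therefore $(v,w)_\Omega=B^*_{b,q}(0,w)=0$ for all $w$, i.e.\ $v=0$.

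\textbf{Main obstacle.} Lemma~\ref{lm26} is stated for $u\in H^{s_m}\cap C_c^\infty$, whereas $\phi$ is only in $H^{s_m}(\mathbb R^n)$ with compact support. We would invoke the version of the UCP actually proved in \cite{LinLiuLo2025}, which holds for $H^{s}$-functions. Alternatively, we would mollify: for $\rho_\delta$ a standard mollifier, the functions $\phi\ast\rho_\delta$ satisfy both conditions on a slightly smaller open set $W'\Subset W$, so the smooth UCP applies and we pass to the limit $\delta\to0$. The pairing must also be justified in $H^{-s_m}\times H^{s_m}$ duality. This is fine because each $(-\Delta)^{s_k}\phi\in H^{s_m-2s_k}\subset H^{-s_m}$.

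\textbf{Part (b).} Take $F\in(\widetilde H^{s_m}(\Omega))^*=H^{-s_m}(\Omega)$ annihilating $\mathcal D$. Repeat the argument with the adjoint problem $B^*_{b,q}(\phi,w)=F(w)$ for $w\in\widetilde H^{s_m}(\Omega)$. We obtain the same identity $B_{b,q}(g,\phi)=0$ for all $g\in C_c^\infty(W)$, hence $\phi\equiv0$ by the UCP, and so $F=0$.
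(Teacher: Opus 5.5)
Your proposal is correct and follows essentially the same route as the paper: Hahn--Banach, the adjoint problem $B^*_{b,q}(\phi,\cdot)=F$ on $\widetilde H^{s_m}(\Omega)$, the identity $F(P_{b,q}g-g)=-B_{b,q}(g,\phi)$ via \eqref{eq28}, and Lemma~\ref{lm26} applied to $\phi$. The paper proves (b) and obtains (a) a fortiori, so your separate $L^2$ argument is redundant but harmless. Your mollification remark is a genuine improvement: the paper applies Lemma~\ref{lm26} to $\phi\in\widetilde H^{s_m}(\Omega)$ without noting that the lemma as stated only covers smooth compactly supported functions.
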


\begin{proof}
See the Appendix.
\end{proof}

\subsection{Higher-order Runge approximation}

In this subsection we prove the higher-order Runge approximation property in weighted $C^k$ spaces, following the strategy of \cite{CekicLinRuland2020}. We work under stronger regularity assumptions on the coefficients and the domain. Throughout this subsection we assume that $\Omega\subset\mathbb R^n$ is a bounded domain with $C^\infty$ boundary, $0<s_1<\cdots<s_m<1$ with $s_m>\frac12$, and
\begin{equation*}
b\in C_c^\infty(\Omega)^n,\qquad q\in C_c^\infty(\Omega),\qquad \operatorname{supp}(b),\operatorname{supp}(q)\Subset\Omega,
\end{equation*}
and that Assumption~\ref{ass1} holds. We denote by
\begin{equation*}
L_{b,q} := \sum_{k=1}^m (-\Delta)^{s_k} + b\cdot\nabla + q
\end{equation*}
the multi-fractional Schr\"odinger operator with drift.

Let $d(x):=\operatorname{dist}(x,\partial\Omega)$ be a smooth positive function in $\Omega$ that equals the distance to the boundary near $\partial\Omega$. Define the weighted Fr\'echet space
\begin{equation*}
\mathcal E^{s_m}(\overline{\Omega}) := e_\Omega\, d(x)^{s_m} C^\infty(\overline{\Omega}),
\end{equation*}
where $e_\Omega$ denotes extension by zero from $\Omega$ to $\mathbb R^n$. Equivalently,
\begin{equation*}
\mathcal E^{s_m}(\overline{\Omega}) = \bigl\{ e_\Omega v : v\in C^\infty(\overline{\Omega}),\ d^{-s_m}v\in C^\infty(\overline{\Omega}) \bigr\}.
\end{equation*}
We equip it with the topology induced by the seminorms $\|d^{-s_m}w\|_{C^k(\overline\Omega)}$, $k\in\mathbb N$.

For $\mu>s_m-\frac12$ we work in the Banach space $H^{s_m(\mu)}(\overline{\Omega})$, which is tailored for solutions $u$ of
\begin{equation*}
r_\Omega L_{b,q}u \in H^{\mu-2s_m}(\Omega),\qquad u=0\ \text{in }\Omega_e,
\end{equation*}
where $r_\Omega$ is the restriction map from $\mathbb R^n$ to $\Omega$. The following result is the multi-fractional analogue of \cite[Lemma 7.1]{CekicLinRuland2020}; it follows from the regularity theory for the fractional Laplacian on smooth domains together with the fact that the highest-order term $(-\Delta)^{s_m}$ dominates the lower-order fractional terms and the drift.

\begin{prop}\cite{GSU-2016}\label{prop:Hsmu}
For $\mu>s_m-\frac12$ and a smooth bounded domain $\Omega\subset\mathbb R^n$ there exists a Banach space $H^{s_m(\mu)}(\overline{\Omega})$ with the following properties:
\begin{enumerate}
\item $H^{s_m(\mu)}(\overline{\Omega})\subset H_{\overline{\Omega}}^{s_m-\frac12}$ with a continuous inclusion;
\item $H^{s_m(\mu)}(\overline{\Omega})=H_{\overline{\Omega}}^{\mu}$ for $\mu\in(s_m-\frac12,s_m+\frac12)$;
\item $r_\Omega L_{b,q}$ is a homeomorphism from $H^{s_m(\mu)}(\overline{\Omega})$ onto $H^{\mu-2s_m}(\Omega)$;
\item $H_{\overline{\Omega}}^{\mu}\subset H^{s_m(\mu)}(\overline{\Omega})\subset H_{\mathrm{loc}}^{\mu}(\Omega)$ with continuous inclusions, or the multiplication by any smooth cut-off $\chi\in C_c^\infty(\Omega)$ is bounded from $H^{s_m(\mu)}(\overline{\Omega})$ to $H^{\mu}(\Omega)$;
\item $\mathcal E^{s_m}(\overline{\Omega})=\bigcap_{\mu>s_m-\frac12}H^{s_m(\mu)}(\overline{\Omega})$ and the set $\mathcal E^{s_m}(\overline{\Omega})$ is dense in $H^{s_m(\mu)}(\overline{\Omega})$.
\end{enumerate}
\end{prop}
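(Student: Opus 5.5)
The plan is to reduce to Hörmander's (Grubb's) theory of $\mu$-transmission spaces for the leading operator $(-\Delta)^{s_m}$, and then treat the remaining terms of $L_{b,q}$ as lower-order perturbations. Concretely, I would define $H^{s_m(\mu)}(\overline{\Omega})$ exactly as in \cite{GSU-2016} (following Hörmander and Grubb) as the $s_m$-transmission space
\begin{equation*}
H^{s_m(\mu)}(\overline{\Omega}) := \Lambda_-^{-s_m}\, e_\Omega \bigl(\overline{H}^{\mu-s_m}(\Omega)\bigr),
\end{equation*}
where $\Lambda_-^{-s_m}$ is an order-reducing pseudodifferential operator of minus type adapted to $\Omega$. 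Properties (1), (2), (4) and (5) concern only the space, not the operator. Since the space is independent of $b$, $q$ and of the lower orders $s_1,\dots,s_{m-1}$, these items are quoted verbatim from Grubb's results as used in \cite{GSU-2016}. Item (5) in particular is Grubb's identification $\mathcal E^{s_m}(\overline\Omega)=\bigcap_\mu H^{s_m(\mu)}$, together with density of $\mathcal E^{s_m}$.

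The real content is item (3). I would first use that $(-\Delta)^{s_m}$ is an elliptic, even, classical pseudodifferential operator of order $2s_m$ with the $s_m$-transmission property. By Grubb's theorem, $r_\Omega(-\Delta)^{s_m}\colon H^{s_m(\mu)}(\overline\Omega)\to H^{\mu-2s_m}(\Omega)$ is then Fredholm, and in fact a homeomorphism, for $\mu>s_m-\frac12$. Next I would write
\begin{equation*}
L_{b,q}=(-\Delta)^{s_m}+R,\qquad R:=\sum_{k<m}(-\Delta)^{s_k}+b\cdot\nabla+q,
\end{equation*}
and show that $r_\Omega R$ is compact from $H^{s_m(\mu)}(\overline\Omega)$ to $H^{\mu-2s_m}(\Omega)$. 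The lower fractional terms have order $2s_k<2s_m$. The drift has order $1<2s_m$ because $s_m>\frac12$; this is exactly where that hypothesis is used. For the drift and potential I would exploit $\operatorname{supp} b,\operatorname{supp} q\Subset\Omega$: with a cutoff $\chi\in C_c^\infty(\Omega)$ equal to $1$ on these supports, item (4) gives $\chi u\in H^\mu$. Hence $b\cdot\nabla u=b\cdot\nabla(\chi u)\in H^{\mu-1}$, which embeds compactly into $H^{\mu-2s_m}$.

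For the terms $(-\Delta)^{s_k}$ with $k<m$, I would argue that each also has the $s_k$-transmission structure. Then $r_\Omega(-\Delta)^{s_k}$ maps $H^{s_m(\mu)}$ into $H^{\mu-2s_k-\delta}(\Omega)$ for some small loss $\delta$, or more directly maps $\mathcal E^{s_m}$ boundedly with a gain. By Rellich compactness on bounded $\Omega$ this lands compactly in $H^{\mu-2s_m}$. Granting compactness, $r_\Omega L_{b,q}$ is Fredholm of index zero. Injectivity on $H^{s_m(\mu)}\subset H^{s_m-\frac12}_{\overline\Omega}$ then needs a regularity bootstrap. For $\mu\ge s_m$ the kernel lies in $\widetilde H^{s_m}(\Omega)$, so Assumption \ref{ass1} kills it. For smaller $\mu$, elliptic regularity from the Fredholm theory shows that kernel elements lie in $\bigcap_\mu H^{s_m(\mu)}=\mathcal E^{s_m}\subset \widetilde H^{s_m}(\Omega)$. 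Injectivity together with index zero gives surjectivity, and the open mapping theorem gives a homeomorphism.

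I expect the main obstacle to be the compactness of the lower fractional terms $(-\Delta)^{s_k}$ near $\partial\Omega$. Functions in $H^{s_m(\mu)}$ behave like $d^{s_m}$ there, and $(-\Delta)^{s_k}$ of such a function has a boundary singularity like $d^{s_m-2s_k}$, which is not of the form the $s_k$-transmission calculus handles cleanly. My first attempt would be to check directly that $d^{s_m}$-type singularities are mapped into $H^{\mu-2s_m+\varepsilon}(\Omega)$ for some $\varepsilon>0$, using Grubb's mapping results for operators of order $2s_k$ acting on $\mu'$-transmission spaces with $\mu'=s_m\ne s_k$. If that fails, I would restrict item (3) to the range $\mu\in(s_m-\frac12,s_m+\frac12)$, where by item (2) the space is $H^\mu_{\overline\Omega}$ and the lower-order terms are plainly compact by standard Sobolev mapping. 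Working in that range is all that the subsequent Runge argument needs.
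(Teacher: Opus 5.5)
There is a genuine gap, and it is the one you flagged yourself. It cannot be closed by a sharper mapping estimate, because for $m\ge 2$ item (3) is false for large $\mu$.

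\textbf{Counterexample.} Take $\Omega=B_1$ and $u=(1-|x|^2)_+^{s_m}\in\mathcal E^{s_m}(\overline{B_1})$. By Dyda's formula, in $B_1$
\begin{equation*}
(-\Delta)^{s_k}u=c\;{}_2F_1\bigl(s_k+\tfrac n2,\,s_k-s_m;\,\tfrac n2;\,|x|^2\bigr),\qquad c\neq 0.
\end{equation*}
The connection formula at $|x|=1$ then produces a term $\kappa_k(1-|x|^2)^{s_m-2s_k}$ with $\kappa_k\propto\Gamma(2s_k-s_m)/\Gamma(s_k-s_m)\neq 0$ (a logarithm if $s_m=2s_k$). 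For $k=m$ the factor $1/\Gamma(0)$ kills this term; that is the $s_m$-transmission property of $(-\Delta)^{s_m}$. For $k<m$ the term survives, since $\sum_k|\xi|^{2s_k}$ is not a classical symbol of order $2s_m$ with integer steps.

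Because $b,q$ vanish near $\partial\Omega$, nothing cancels the most singular term $d^{s_m-2s_{m-1}}$. That term lies in $H^t(\Omega)$ only for $t<s_m-2s_{m-1}+\frac12$. Hence once $\mu\ge 3s_m-2s_{m-1}+\frac12$, $r_\Omega L_{b,q}$ does not even map $H^{s_m(\mu)}(\overline\Omega)$ into $H^{\mu-2s_m}(\Omega)$. So $r_\Omega(-\Delta)^{s_k}$ cannot be a compact perturbation of the leading part.

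Equivalently, (3) and (5) together would give $r_\Omega L_{b,q}\,\mathcal E^{s_m}(\overline\Omega)\subset\bigcap_\mu H^{\mu-2s_m}(\Omega)=C^\infty(\overline\Omega)$. The example refutes this whatever Banach spaces are chosen.

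The paper gives no proof beyond citing the single-order results of \cite{GSU-2016} and \cite[Lemma 7.1]{CekicLinRuland2020} and asserting that $(-\Delta)^{s_m}$ dominates. That is exactly your perturbation argument. It works in those references only because the perturbation $b\cdot\nabla+q$ is compactly supported in $\Omega$ and so invisible at the boundary; $(-\Delta)^{s_k}$ is not.

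\textbf{Your fallback.} It is correct as far as it goes. For $\mu\in(s_m-\frac12,s_m+\frac12)$ one has $H^{s_m(\mu)}=H^\mu_{\overline\Omega}$. The lower-order terms are then compact into $H^{\mu-2s_m}(\Omega)$ by Sobolev mapping and Rellich, and the index is zero.

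For injectivity, do not route through $\bigcap_\mu H^{s_m(\mu)}=\mathcal E^{s_m}$, which again needs (3) at all levels. Instead bootstrap finitely many times with gain $\min\{2(s_m-s_{m-1}),\,2s_m-1\}$, using only the homeomorphism for $(-\Delta)^{s_m}$, until $\mu\ge s_m$. Then apply Assumption~\ref{ass1}.

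\textbf{Why the fallback does not suffice.} Your closing claim that this range is all the Runge argument needs is wrong. In the proof of Lemma~\ref{lm34}, a continuous functional on the Fr\'echet space $\mathcal E^{s_m}(\overline\Omega)$ is bounded by $\|\cdot\|_{H^{s_m(\ell)}}$ for some possibly large $\ell$, and the homeomorphism is used at that $\ell$.

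The first step of that proof, $\mathcal D\subset\mathcal E^{s_m}(\overline\Omega)$, also uses (3) and (5) in the full range, and it fails too. Indeed $v=P_{b,q}g-g$ satisfies $r_\Omega L_{b,q}v\in C^\infty(\overline\Omega)$. Yet in general $v$ carries boundary terms such as $d^{3s_m-2s_{m-1}}$, so $d^{-s_m}v\notin C^\infty(\overline\Omega)$.

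Your restricted result therefore yields Runge approximation only in $H^\mu_{\overline\Omega}$ for $\mu$ in a bounded range. It does not give the weighted $C^k$ approximation of Theorem~\ref{thm:runge}(b) for all $k$. That would need solution spaces built on a boundary expansion with non-integer powers of $d$.
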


Equipped with the topology induced by $\{\|\cdot\|_{H^{s_m(k)}}\}_{k=1}^{\infty}$, the space $\mathcal E^{s_m}(\overline{\Omega})$ is a Fr\'echet space. Building on these properties, we prove the density of the solution set in $\mathcal E^{s_m}(\overline{\Omega})$.

\begin{lemma}\label{lm34}
Let $W\subset\Omega_e$ be a nonempty open set. Let $P_{b,q}$ be the Poisson operator for the homogeneous problem
\begin{equation*}
L_{b,q}u=0\ \text{in }\Omega,\qquad u=g\ \text{in }\Omega_e,
\end{equation*}
and define
\begin{equation*}
\mathcal D := \bigl\{ e_\Omega(r_\Omega P_{b,q}g) : g\in C_c^\infty(W) \bigr\}.
\end{equation*}
Then $\mathcal D$ is dense in the Fr\'echet space $\mathcal E^{s_m}(\overline{\Omega})$ with the topology induced by $\{\|\cdot\|_{H^{s_m(k)}}\}_{k=1}^{\infty}$.
\end{lemma}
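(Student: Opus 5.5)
This will be a Hahn--Banach duality argument in a Fréchet space, following \cite[Lemma 7.2]{CekicLinRuland2020}, with the strong unique continuation principle of Lemma~\ref{lm26} as the key input.

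\textbf{Setup.} Since $b,q$ are compactly supported in $\Omega$ and $g\in C_c^\infty(W)$ vanishes near $\overline\Omega$, we have
\[
P_{b,q}g = g + w_g ,\qquad w_g\in \widetilde H^{s_m}(\Omega),\qquad r_\Omega L_{b,q}w_g = -r_\Omega \sum_{k=1}^m(-\Delta)^{s_k} g .
\]
The right-hand side is smooth on $\overline\Omega$, because $g$ is supported at positive distance from $\Omega$. By Proposition~\ref{prop:Hsmu}(3) and (5), $w_g$ lies in every $H^{s_m(\mu)}(\overline\Omega)$, and hence in $\mathcal E^{s_m}(\overline\Omega)$. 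Thus $\mathcal D=\{w_g\}\subset\mathcal E^{s_m}(\overline\Omega)$.

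\textbf{Duality.} By Hahn--Banach in the Fréchet space $\mathcal E^{s_m}(\overline\Omega)$, it suffices to show that any continuous linear functional $\ell$ vanishing on $\mathcal D$ is zero. Continuity with respect to the seminorms $\|\cdot\|_{H^{s_m(k)}}$ means $|\ell(w)|\le C\|w\|_{H^{s_m(k)}}$ for some fixed $k$. Since $\mathcal E^{s_m}$ is dense in $H^{s_m(k)}(\overline\Omega)$ by Proposition~\ref{prop:Hsmu}(5), $\ell$ extends to $(H^{s_m(k)}(\overline\Omega))^*$. Composing with the inverse of the homeomorphism $r_\Omega L_{b,q}$ from Proposition~\ref{prop:Hsmu}(3), we get $\ell\circ (r_\Omega L_{b,q})^{-1}\in (H^{k-2s_m}(\Omega))^*$. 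This dual is represented by some $v$ in a space of the type $\widetilde H^{2s_m-k}(\overline\Omega)$, a distribution supported in $\overline\Omega$. Concretely,
\[
\ell(w)=\langle v, r_\Omega L_{b,q} w\rangle \quad \text{for } w\in H^{s_m(k)}(\overline\Omega).
\]

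\textbf{Identifying $v$.} Testing first with $w=\varphi\in C_c^\infty(\Omega)$ shows that $\ell$ is given by $L_{b,q}^*v$ in the distributional sense. The hypothesis $\ell(w_g)=0$ reads
\[
\Bigl\langle v, \sum_k(-\Delta)^{s_k}g\Bigr\rangle=0 \quad\text{for all } g\in C_c^\infty(W),
\]
that is, $\sum_k(-\Delta)^{s_k}v=0$ in $W$, using the self-adjointness of $(-\Delta)^{s_k}$ on distributions. Also $v=0$ in $W\subset\Omega_e$, since $v$ is supported in $\overline\Omega$.

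\textbf{Unique continuation and the main obstacle.} We now want to apply Lemma~\ref{lm26} to conclude $v\equiv 0$, hence $\ell=0$. This is the main obstacle: Lemma~\ref{lm26} is stated for smooth $H^{s_m}$ functions, whereas $v$ is a possibly very rough compactly supported distribution. I will handle it by mollification. Set $v_\varepsilon=v*\rho_\varepsilon$. Then $v_\varepsilon$ is smooth and compactly supported, and on a slightly shrunk set $W'\Subset W$ it satisfies $v_\varepsilon=0$ and $\sum_k(-\Delta)^{s_k}v_\varepsilon=0$, because convolution commutes with Fourier multipliers. Lemma~\ref{lm26} gives $v_\varepsilon\equiv0$, and letting $\varepsilon\to0$ yields $v=0$. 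Alternatively, one could invoke the distributional version of the unique continuation principle from \cite{LinLiuLo2025}. Finally, $v=0$ gives $\ell=0$ on $H^{s_m(k)}(\overline\Omega)$, hence on $\mathcal E^{s_m}(\overline\Omega)$, which proves the density of $\mathcal D$.
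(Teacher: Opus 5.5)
Your proposal is correct and follows the paper's proof essentially step by step: Hahn--Banach in the Fr\'echet space, extension of the functional to $H^{s_m(\ell)}(\overline{\Omega})$ by density, representation through the adjoint of the homeomorphism $r_\Omega L_{b,q}$ by a distribution $v\in H_{\overline{\Omega}}^{-\ell+2s_m}$, and the conclusion $v|_W=\sum_k(-\Delta)^{s_k}v|_W=0$, which is then fed into Lemma~\ref{lm26}. Your mollification step is a worthwhile addition that the paper omits. The paper applies Lemma~\ref{lm26}, which is stated only for functions in $H^{s_m}(\mathbb R^n)\cap C_c^\infty(\mathbb R^n)$, directly to the rough distribution $v$, whereas convolving with $\rho_\varepsilon$ (which commutes with the Fourier multiplier $\sum_k(-\Delta)^{s_k}$) legitimately reduces to the smooth compactly supported case.
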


\begin{proof}
See the Appendix.
\end{proof}

Now we are ready to prove the higher-order Runge approximation property.

\begin{proof}[Proof of Theorem~\ref{thm:runge}(b)]
Since $\Omega\Subset\Omega_1$ with $\operatorname{int}(\Omega_1\setminus\overline{\Omega})\neq\emptyset$, we may choose a small ball $W$ with $\overline W\subset\Omega_1\setminus\overline{\Omega}$. Let $g\in C^\infty(\overline{\Omega})$ and define
\begin{equation*}
h := e_\Omega d(x)^{s_m} g \in \mathcal E^{s_m}(\overline{\Omega}).
\end{equation*}
By Lemma~\ref{lm34}, there exists a sequence of solutions $\{u_j\}\subset H^{s_m}(\mathbb R^n)$ satisfying
\begin{equation*}
L_{b,q}u_j = 0 \quad\text{in }\Omega,\qquad \operatorname{supp}(u_j)\subset\overline{\Omega_1},
\end{equation*}
such that $e_\Omega r_\Omega u_j\in\mathcal E^{s_m}(\overline{\Omega})$ and
\begin{equation*}
e_\Omega r_\Omega u_j \to h \quad\text{in }\mathcal E^{s_m}(\overline{\Omega})\text{ as }j\to\infty.
\end{equation*}
The higher-order approximation will follow if we show that the map
\begin{equation*}
\mathcal M : C^\infty(\overline{\Omega}) \to \mathcal E^{s_m}(\overline{\Omega}),\qquad
\mathcal M g := e_\Omega d(x)^{s_m} g
\end{equation*}
is a homeomorphism. Indeed, if $\mathcal M$ is a homeomorphism, then we can apply $\mathcal M^{-1} = d(x)^{-s_m} r_\Omega$ to obtain
\begin{equation*}
d(x)^{-s_m} r_\Omega u_j \to g \quad\text{in } C^\infty(\overline{\Omega}).
\end{equation*}
To see that $\mathcal M$ is a homeomorphism, note that it is a bijective linear map between Fr\'echet spaces. It has a closed graph: if $g_j\to g$ in $C^\infty(\overline{\Omega})$ and $\mathcal M g_j\to h$ in $\mathcal E^{s_m}(\overline{\Omega})$, then also $\mathcal M g_j\to \mathcal M g$ in $L^\infty(\Omega)$. By uniqueness of the limit, $\mathcal M g = h$ as distributional limits. Hence, by the closed graph theorem and the open mapping theorem, $\mathcal M$ is a homeomorphism. This finishes the proof.
\end{proof}

\section{Uniqueness}\label{sec4}

In this section we derive the Alessandrini-type identities relating the difference of two DtN maps to the differences of the unknown coefficients, and use them to prove uniqueness. We distinguish two cases: the homogeneous source case ($f=0$) and the non-homogeneous source case ($f\neq 0$).

Throughout this section we assume that Assumption~\ref{ass1} holds for all coefficient sets considered, so that the exterior value problems are well-posed and the DtN maps are well-defined.

\subsection{Case A: Homogeneous source ($f=0$)}

Let $(b_1,q_1)$ and $(b_2,q_2)$ be two sets of coefficients with $b_j\in W^{1-s_m,\infty}(\Omega)^n$, $q_j\in L^\infty(\Omega)$, and suppose that the corresponding DtN maps agree:
\begin{equation*}
\Lambda_{b_1,q_1} = \Lambda_{b_2,q_2}.
\end{equation*}
Here $\Lambda_{b,q}$ is the linear DtN map defined in Definition~\ref{def1}.

Let $g_1,g_2\in X=H^{s_m}(\mathbb R^n)/\widetilde H^{s_m}(\Omega)$ be two exterior data. Denote by $u_1\in H^{s_m}(\mathbb R^n)$ the solution of
\begin{equation*}
L_{b_1,q_1} u_1 = 0 \quad \text{in } \Omega,\qquad u_1-g_1\in\widetilde H^{s_m}(\Omega),
\end{equation*}
and by $u_2^*\in H^{s_m}(\mathbb R^n)$ the solution of the adjoint problem with coefficients $(b_2,q_2)$,
\begin{equation*}
L_{b_2,q_2}^* u_2^* = 0 \quad \text{in } \Omega,\qquad u_2^*-g_2\in\widetilde H^{s_m}(\Omega).
\end{equation*}
Then the following identity holds.

\begin{lemma}\label{lem:alessandrini-hom}
Under the above assumptions, we have
\begin{equation*}
\langle (\Lambda_{b_1,q_1}-\Lambda_{b_2,q_2})g_1,\,g_2\rangle
= \int_\Omega (b_1-b_2)\cdot\nabla u_1\, u_2^*\,dx
+ \int_\Omega (q_1-q_2)u_1 u_2^*\,dx.
\end{equation*}
\end{lemma}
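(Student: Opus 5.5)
The plan is the standard Alessandrini argument, carried out with the bilinear forms $B_{b_j,q_j}$ of \eqref{eq22} and the adjoint relation \eqref{eq28}. First I express each DtN pairing through a bilinear form evaluated at solutions. By Definition~\ref{def1}, $\langle \Lambda_{b_1,q_1} g_1, g_2\rangle = B_{b_1,q_1}(u_1, g_2)$. Since $u_2^*-g_2\in\widetilde H^{s_m}(\Omega)$ and $u_1$ solves the equation weakly, i.e. $B_{b_1,q_1}(u_1,w)=0$ for all $w\in\widetilde H^{s_m}(\Omega)$, I can replace $g_2$ by $u_2^*$:
\begin{equation*}
\langle \Lambda_{b_1,q_1} g_1, g_2\rangle = B_{b_1,q_1}(u_1,u_2^*).
\end{equation*}
This also shows the pairing depends only on the class of $g_2$ in $X$.

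Next, for the second DtN map I use Lemma~\ref{lm25}: $\langle \Lambda_{b_2,q_2} g_1, g_2\rangle = \langle g_1, \Lambda^*_{b_2,q_2} g_2\rangle = B^*_{b_2,q_2}(u_2^*, g_1)$. Because $u_2^*$ solves the adjoint problem weakly, $B^*_{b_2,q_2}(u_2^*,w)=0$ for all $w\in\widetilde H^{s_m}(\Omega)$. Since $u_1-g_1\in \widetilde H^{s_m}(\Omega)$, I may replace $g_1$ by $u_1$. Then \eqref{eq28} gives
\begin{equation*}
\langle \Lambda_{b_2,q_2} g_1, g_2\rangle = B^*_{b_2,q_2}(u_2^*,u_1)=B_{b_2,q_2}(u_1,u_2^*).
\end{equation*}

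Subtracting the two expressions, the nonlocal terms $\sum_k((-\Delta)^{s_k/2}u_1,(-\Delta)^{s_k/2}u_2^*)_{\mathbb R^n}$ are identical in both forms and cancel. What remains is exactly
\begin{equation*}
((b_1-b_2)\cdot\nabla u_1,u_2^*)_\Omega+((q_1-q_2)u_1,u_2^*)_\Omega,
\end{equation*}
which is the claimed identity.

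The main obstacle is rigor in the low-regularity setting. The term $\int_\Omega (b_1-b_2)\cdot\nabla u_1\,u_2^*$ must make sense with $b_j\in W^{1-s_m,\infty}$ and $u_1,u_2^*\in H^{s_m}$, where $\nabla u_1$ is only in $H^{s_m-1}$. I would interpret it through the bounded extension of Lemma~\ref{lm23} (a duality pairing of $\nabla u_1\in H^{s_m-1}$ against $b\,u_2^*\in H^{1-s_m}$). Every identity above is then verified first for smooth functions and passed to the limit by continuity of the forms on $H^{s_m}\times H^{s_m}$.

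The second delicate point is the consistency of Lemma~\ref{lm25} with the chosen representatives, i.e. that $\langle g_1,\Lambda^*_{b_2,q_2}g_2\rangle$ equals $B^*_{b_2,q_2}(u_2^*,g_1)$. I would justify this directly from the definition of $\Lambda^*$ together with the weak-solution property of $u_2^*$, rather than relying on the loosely stated symmetry.
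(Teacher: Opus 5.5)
Your proposal is correct and follows the paper's proof. Both arguments write $\langle\Lambda_{b_1,q_1}g_1,g_2\rangle=B_{b_1,q_1}(u_1,u_2^*)$ and, via Lemma~\ref{lm25} and \eqref{eq28}, $\langle\Lambda_{b_2,q_2}g_1,g_2\rangle=B^*_{b_2,q_2}(u_2^*,u_1)=B_{b_2,q_2}(u_1,u_2^*)$, then subtract so that the nonlocal terms cancel. Your explicit use of the weak-solution properties to replace $g_2$ by $u_2^*$ and $g_1$ by $u_1$ supplies the representative-swapping steps that the paper leaves implicit.
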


\begin{proof}
Using the definition of the DtN map and the symmetry of the bilinear forms (see \eqref{eq28}), we have
\begin{equation*}
\langle \Lambda_{b_1,q_1} g_1,g_2\rangle
= B_{b_1,q_1}(u_1,u_2^*)
= B_{b_1,q_1}^*(u_2^*,u_1)
\end{equation*}
and
\begin{equation*}
\langle \Lambda_{b_2,q_2} g_1,g_2\rangle
= \langle g_1,\Lambda_{b_2,q_2}^* g_2\rangle
= B_{b_2,q_2}^*(u_2^*,u_1)
= B_{b_2,q_2}(u_1,u_2^*).
\end{equation*}
so
\begin{equation*}
\begin{aligned}
\langle (\Lambda_{b_1,q_1}-\Lambda_{b_2,q_2})g_1,\,g_2\rangle
&=\langle \Lambda_{b_1,q_1}g_1,\,g_2\rangle-\langle g_1,\,\Lambda^*_{b_2,q_2}g_2\rangle\\
&= B_{b_1,q_1}(u_1,u_2^*)-B_{b_2,q_2}^*(u_2^*,u_1)\\
&= \int_\Omega (b_1-b_2)\cdot\nabla u_1\, u_2^*\,dx
+ \int_\Omega (q_1-q_2)u_1 u_2^*\,dx.
\end{aligned}
\end{equation*}
\end{proof}

\begin{theorem}[Uniqueness, homogeneous case]\label{thm:unique-hom}
For \(n\ge 1\) let \(\Omega\subset\mathbb R^n\) be a bounded, open, non-empty Lipschitz domain, and let \(\frac12<s_m<1\). Let \(b_j\in W^{1-s_m,\infty}(\Omega)^n\) be two drift fields, and \(q_j\in L^\infty(\Omega)\) be potentials for \(j=1,2\). Given arbitrary open, non-empty sets \(W_1,W_2\subset\Omega_e\) suppose that the DtN maps for the equations
\begin{equation*}
(\sum_{k=1}^m (-\Delta)^{s_k}+b_j\cdot\nabla+q_j)u_j=0\quad\text{in }\Omega
\end{equation*}
satisfy
\begin{equation*}
\Lambda_{b_1,q_1}g|_{W_2}=\Lambda_{b_2,q_2}g|_{W_2},
\quad\text{for any }g\in C_c^\infty(W_1).
\end{equation*}
Then \(b_1=b_2\) and \(q_1=q_2\) in \(\Omega\).
\end{theorem}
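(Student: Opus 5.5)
The plan is to combine the Alessandrini identity of Lemma~\ref{lem:alessandrini-hom} with the Runge approximation of Lemma~\ref{lm32} and Theorem~\ref{thm:runge}, following \cite{CekicLinRuland2020}. For \(g_1\in C_c^\infty(W_1)\) and \(g_2\in C_c^\infty(W_2)\), let \(u_1\) solve \(L_{b_1,q_1}u_1=0\) with exterior data \(g_1\), and let \(u_2^*\) solve the adjoint problem for \((b_2,q_2)\) with exterior data \(g_2\). The partial-data hypothesis gives \(\langle(\Lambda_{b_1,q_1}-\Lambda_{b_2,q_2})g_1,g_2\rangle=0\), because the pairing only tests \(\Lambda g_1\) on \(\operatorname{supp} g_2\subset W_2\). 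Lemma~\ref{lem:alessandrini-hom} then yields
\begin{equation*}
\int_\Omega (b_1-b_2)\cdot\nabla u_1\, u_2^*\,dx+\int_\Omega (q_1-q_2)u_1u_2^*\,dx=0
\end{equation*}
for all such solutions. Since \(g_j=0\) in \(\Omega\), we may replace \(u_1,u_2^*\) inside \(\Omega\) by the elements \(P_{b_1,q_1}g_1-g_1\) and \(P^*_{b_2,q_2}g_2-g_2\) of \(\widetilde H^{s_m}(\Omega)\).

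Next we pass to arbitrary test functions. By Lemma~\ref{lm32}(b), the restrictions \(u_1|_\Omega\) are dense in \(\widetilde H^{s_m}(\Omega)\). The same duality argument, with the UCP Lemma~\ref{lm26} applied to the adjoint operator, shows that the \(u_2^*|_\Omega\) are dense as well. The bilinear map \((v,w)\mapsto\int_\Omega (b_1-b_2)\cdot\nabla v\,w+(q_1-q_2)vw\) is bounded on \(\widetilde H^{s_m}(\Omega)\times\widetilde H^{s_m}(\Omega)\), by the estimate of Lemma~\ref{lm23}, which uses \(b\in W^{1-s_m,\infty}\) and \(s_m>\tfrac12\) so that \(\nabla v\in H^{s_m-1}\) pairs with \(bw\in H^{1-s_m}\). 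Hence
\begin{equation*}
\int_\Omega (b_1-b_2)\cdot\nabla v\, w\,dx+\int_\Omega (q_1-q_2)vw\,dx=0\qquad\text{for all } v,w\in \widetilde H^{s_m}(\Omega),
\end{equation*}
and in particular for all \(v,w\in C_c^\infty(\Omega)\).

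To decouple \(b\) and \(q\), fix \(x_0\in\Omega\) and \(\varphi\in C_c^\infty(\Omega)\). First take \(v\) equal to \(1\) on a neighbourhood of \(\operatorname{supp}\varphi\) and \(w=\varphi\). This gives \(\int_\Omega(q_1-q_2)\varphi=0\) for every \(\varphi\), so \(q_1=q_2\). Next take \(v\) equal to \(x_i\) near \(\operatorname{supp}\varphi\), so that \(\nabla v=e_i\) there, and \(w=\varphi\). Since the \(q\)-term now vanishes, this gives \(\int_\Omega (b_1-b_2)_i\varphi=0\), hence \(b_1=b_2\). For distributional coefficients in \(W^{1-s_m,\infty}\) these integrals are duality pairings, which is still fine because the test functions are smooth.

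The main obstacle is the density step for the adjoint solutions, together with a careful justification that the exterior parts do not contribute. Lemma~\ref{lm32} is stated only for \(P_{b,q}\), so I would redo its Hahn--Banach argument for \(L^*_{b,q}\). Suppose \(F\in H^{-s_m}(\Omega)\) annihilates all \(P^*g-g\). Solve \(L_{b,q}\phi=F\) with \(\phi\in\widetilde H^{s_m}(\Omega)\), using Lemma~\ref{prop:wellposed} together with the non-eigenvalue assumption. Then \(B(\phi,g)=0\) for all \(g\in C_c^\infty(W)\), i.e. \(\sum_k(-\Delta)^{s_k}\phi=0\) in \(W\) while \(\phi=0\) in \(W\). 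Lemma~\ref{lm26} then forces \(\phi\equiv0\), hence \(F=0\). The drift term does not obstruct this, because \(b\) is supported in \(\Omega\) and the identity is tested only in \(W\subset\Omega_e\).
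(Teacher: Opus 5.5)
Your proposal is correct and follows essentially the same route as the paper: the Alessandrini identity of Lemma~\ref{lem:alessandrini-hom}, $\widetilde H^{s_m}(\Omega)$-Runge density for both the direct and adjoint solutions, passage to the limit via boundedness of the drift/potential pairing, and then testing with $\psi_1\equiv 1$ and $\psi_1=x_i$ near $\operatorname{supp}\psi_2$. Your explicit Hahn--Banach/UCP argument for the adjoint density, and your choice $g_2\in C_c^\infty(W_2)$ to use the partial-data hypothesis, supply steps that the paper only invokes implicitly.
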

\begin{proof}
Since
\begin{equation*}
\Lambda_{b_1,q_1}g|_{W_2}=\Lambda_{b_2,q_2}g|_{W_2}
\quad\text{for any }g\in C_c^\infty(W_1),
\end{equation*}
where \(W_1,W_2\) are arbitrary, but fixed non-empty open sets in \(\Omega_e\). Now we seek to recover the potential and the drift coefficients separately.

\emph{Step1: Recovery of the potential $q$}.
Choose an arbitrary test function $\psi_2\in C_c^\infty(\Omega)$. Then choose
\(\psi_1\in C_c^\infty(\Omega)\) such that \(\psi_1=1\) on the set
\(\operatorname{supp}(\psi_2)\Subset\Omega\). By Lemma \ref{lem:Runge}, there exist sequences of solutions
\((u_j^1)_{j=1}^\infty\) and \((u_j^{2,*})_{j=1}^\infty\) in \(H^{s_m}(\mathbb R^n)\) such that
\begin{equation}
\label{eq51}
\begin{aligned}
(\sum_{k=1}^m (-\Delta)^{s_k}+b_1\cdot\nabla+q_1)u_j^1=0,\quad
&\sum_{k=1}^m (-\Delta)^{s_k} u_j^{2,*}-\nabla\cdot(b_2u_j^{2,*})+q_2u_j^{2,*}=0
\quad\text{in }\Omega,\\
\operatorname{supp}(u_j^1)\subset\overline{\Omega_1},\qquad
&\operatorname{supp}(u_j^{2,*})\subset\overline{\Omega_2},\\
u_j^1|_\Omega=\psi_1+r_j^1,\qquad
&u_j^{2,*}|_\Omega=\psi_2+r_j^{2,*},
\end{aligned}
\end{equation}
where \(\Omega_1,\Omega_2\) are non-empty open sets in \(\mathbb R^n\) containing
\(\overline{\Omega}\), and \(r_j^1,r_j^{2,*}\to0\) strongly in
\(\widetilde H^{s_m}(\Omega)\) as \(j\to\infty\).

Substituting $u_{1,j}$ and $u_{2,j}^*$ into Lemma \ref{lem:alessandrini-hom}, we obtain for every $j$,
\begin{equation*}
\int_\Omega (b_1-b_2)\cdot\nabla u_{1,j}\, u_{2,j}^*\,dx
+ \int_\Omega (q_1-q_2)u_{1,j}u_{2,j}^*\,dx = 0.
\end{equation*}
Here
\begin{equation}
\label{eq52}
\begin{aligned}
&\left|\int_\Omega (b_1-b_2)\cdot \nabla u_j^1\,u_j^{2,*}\,dx\right|\\
&\le \left|\int_\Omega (b_1-b_2)\cdot \nabla r_j^1\,r_j^{2,*}\,dx\right|
\quad+\left|\int_\Omega (b_1-b_2)\cdot \nabla r_j^1\,\psi_2\,dx\right| \\
&\quad+\left|\int_\Omega (b_1-b_2)\cdot \nabla \psi_1\,r_j^{2,*}\,dx\right|
\quad+\left|\int_\Omega (b_1-b_2)\cdot \nabla \psi_1\,\psi_2\,dx\right|
\end{aligned}
\end{equation}
where
\begin{equation*}
\label{eq52}
\begin{aligned}
&C\|b_1-b_2\|_{W^{1-s_m,\infty}(\Omega)}
\Big(
\|r_j^1\|_{\widetilde H^{s_m}(\Omega)}
\|r_j^{2,*}\|_{\widetilde H^{s_m}(\Omega)}
+\|r_j^1\|_{\widetilde H^{s_m}(\Omega)}
\|\psi_2\|_{\widetilde H^{s_m}(\Omega)} \\
&\quad+\|\psi_1\|_{\widetilde H^{s_m}(\Omega)}
\|r_j^{2,*}\|_{\widetilde H^{s_m}(\Omega)}
\Big)
\to 0 \quad\text{as }j\to\infty.
\end{aligned}
\end{equation*}
Letting $j\to\infty$ and using the strong convergence in $\widetilde H^{s_m}$ yields
\begin{equation*}
\int_\Omega (b_1-b_2)\cdot\nabla \psi_1\, \psi_2\,dx
+ \int_\Omega (q_1-q_2)\psi_1\psi_2\,dx = 0.
\end{equation*}
Since $\psi_1\equiv 1$ on $\operatorname{supp}\psi_2$, we have $\nabla\psi_1=0$ on $\operatorname{supp}\psi_2$, so $\left|\int_\Omega (b_1-b_2)\cdot \nabla \psi_1\,\psi_2\,dx\right|=0$. Thus
\begin{equation*}
\int_\Omega (q_1-q_2)\psi_2\,dx = 0.
\end{equation*}
As $\psi_2\in C_c^\infty(\Omega)$ is arbitrary, we conclude that $q_1=q_2$.

\emph{Step2: Recovery of the drift $b$.}
Now that $q_1=q_2$, Lemma \ref{lem:alessandrini-hom} becomes
\begin{equation}
\label{eq53}
\int_\Omega (b_1-b_2)\cdot\nabla u_1\,u_2^*\,dx=0.
\end{equation}
Fix an index $i\in\{1,\dots,n\}$ and an arbitrary test function $\psi_2\in C_c^\infty(\Omega)$. Choose $\psi_{x_i}\in C_c^\infty(\Omega)$ such that $\psi_{x_i}(x)=x_i$ on $\operatorname{supp}\psi_2$. By Lemma  \ref{lem:Runge}, we can find sequences of solutions
\(u_j^1=\psi_{x_i}+r_j^1\) and \(u_j^{2,*}=\psi_2+r_j^{2,*}\),
with \(r_j^1,r_j^{2,*}\to0\) strongly in \(\widetilde H^{s_m}(\Omega)\) as \(j\to\infty\).
Plugging the functions \(\psi_{x_i}\) and \(\psi_2\) into \ref{eq53}, we obtain
\begin{equation*}
\begin{aligned}
\int_{\Omega}(b_1-b_2)_i\psi_2\,dx
&+\int_{\Omega}(b_1-b_2)\cdot\nabla\psi_{x_i}\,r_j^{2,*}\,dx\\
&+\int_{\Omega}(b_1-b_2)\cdot\nabla r_j^1\,\psi_2\,dx
+\int_{\Omega}(b_1-b_2)\cdot\nabla r_j^1\,r_j^{2,*}\,dx=0
\end{aligned}
\end{equation*}
for \(i=1,2,\ldots,n\), where \((b_1-b_2)_k\) denotes the \(i\)-th component of the vector-valued function \(b_1-b_2\). By arguing similarly as in \eqref{eq52}, in the limit \(j\to\infty\) we arrive at
\begin{equation*}
\int_\Omega (b_1-b_2)_i\psi_2\,dx=0
\quad\text{for }i=1,2,\ldots,n.
\end{equation*}
Since \(\psi_2\in C_c^\infty(\Omega)\) is arbitrary and as \(C_c^\infty(\Omega)\) is dense in \(L^2(\Omega)\), we also conclude that
\((b_1-b_2)_i=0\) for all \(i=1,2,\ldots,n\). Therefore, \(b_1=b_2\), which completes the proof.
\end{proof}

%
%

\subsection{Case B: Non-homogeneous source ($f\neq 0$)}

We now turn to the non-homogeneous case. Let $(b_1,q_1,f_1)$ and $(b_2,q_2,f_2)$ be two triples with $b_j\in W^{1-s_m,\infty}(\Omega)^n$, $q_j\in L^\infty(\Omega)$, $f_j\in L^2(\Omega)$. Suppose that the full DtN maps (which are affine in the exterior data) coincide:
\begin{equation*}
\Lambda_{b_1,q_1,f_1} = \Lambda_{b_2,q_2,f_2}.
\end{equation*}
We will show that $b_1=b_2$, $q_1=q_2$, and $f_1=f_2$.

The definition of $\Lambda_{b,q,f}$ is independent of the choice of the
representative $v_h$ of the class $h\in X$. Indeed, if $v_h,v_h'\in
H^{s_m}(\mathbb R^n)$ satisfy $v_h-v_h'\in\widetilde H^{s_m}(\Omega)$, then
by the weak formulation of the non-homogeneous problem,
\begin{equation*}
B_{b,q}(u_g,\,v_h-v_h')
= \int_\Omega f\,(v_h-v_h')\,dx,
\end{equation*}
so that
\begin{equation*}
B_{b,q}(u_g,v_h) - \int_\Omega f\,v_h\,dx
=
B_{b,q}(u_g,v_h') - \int_\Omega f\,v_h'\,dx.
\end{equation*}
Hence $\Lambda_{b,q,f}$ is well-defined. Moreover, when $f=0$, the
definition above reduces to that of the linear DtN map $\Lambda_{b,q}$ in
Definition~\ref{def1}.

First we define the DtN map for the non-homogeneous problem. For a given exterior datum $g\in X$, let $u_g\in H^{s_m}(\mathbb R^n)$ be the unique solution of
\begin{equation*}
L_{b,q,f}u_g = f \quad \text{in } \Omega,\qquad u_g-g\in\widetilde H^{s_m}(\Omega).
\end{equation*}
Then for any $h\in X$, we define
\begin{equation*}
\langle \Lambda_{b,q,f}(g),\,h\rangle
:= B_{b,q}(u_g,v_h) - \int_\Omega f\, v_h\,dx,
\end{equation*}
where $v_h$ is any representative of $h$. This definition is independent of the choice of representative and is consistent with the homogeneous case when $f=0$.

Let $g_1,g_2\in X$ be two exterior data. Let $u_1$ be the solution corresponding to $(b_1,q_1,f_1)$ with exterior data $g_1$. Let $u_2^*$ be a solution of the homogeneous adjoint equation with coefficients $(b_2,q_2)$:
\begin{equation*}
L_{b_2,q_2,0}^* u_2^* = 0 \quad \text{in } \Omega,\qquad u_2^*-g_2\in\widetilde H^{s_m}(\Omega).
\end{equation*}

\begin{lemma}\label{lem:alessandrini-nonhom}
Under the above assumptions,
\begin{equation*}
\langle (\Lambda_{b_1,q_1,f_1}-\Lambda_{b_2,q_2,f_2})g_1,\,g_2\rangle
= \int_\Omega (b_1-b_2)\cdot\nabla u_1\, u_2^*\,dx
+ \int_\Omega (q_1-q_2)u_1u_2^*\,dx
+ \int_\Omega (f_2-f_1)u_2^*\,dx.
\end{equation*}
\end{lemma}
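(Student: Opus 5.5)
The plan is to compute each of the two pairings directly from the definition of $\Lambda_{b,q,f}$. I will use the freedom in the choice of representative of $g_2$, and then use the adjoint equation for $u_2^*$ to put both pairings in terms of the same solution $u_1$. Since the definition of $\langle\Lambda_{b,q,f}(g),h\rangle$ does not depend on the representative $v_h$ of $h\in X$, and $u_2^*-g_2\in\widetilde H^{s_m}(\Omega)$, I will take $v_{g_2}:=u_2^*$ throughout.

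\emph{Step 1 (first pairing).} With $u_1$ the solution for $(b_1,q_1,f_1)$ and exterior datum $g_1$, the definition gives directly
\begin{equation*}
\langle \Lambda_{b_1,q_1,f_1}g_1,g_2\rangle = B_{b_1,q_1}(u_1,u_2^*)-\int_\Omega f_1u_2^*\,dx .
\end{equation*}

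\emph{Step 2 (second pairing).} Let $\tilde u_1$ be the solution for $(b_2,q_2,f_2)$ with the same datum $g_1$. Then
\begin{equation*}
\langle \Lambda_{b_2,q_2,f_2}g_1,g_2\rangle = B_{b_2,q_2}(\tilde u_1,u_2^*)-\int_\Omega f_2u_2^*\,dx .
\end{equation*}
The key manipulation is to replace $\tilde u_1$ by $u_1$. First I use \eqref{eq28} to write $B_{b_2,q_2}(\tilde u_1,u_2^*)=B^*_{b_2,q_2}(u_2^*,\tilde u_1)$. Since $u_2^*$ weakly solves the homogeneous adjoint equation, $B^*_{b_2,q_2}(u_2^*,w)=0$ for every $w\in\widetilde H^{s_m}(\Omega)$. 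Both $\tilde u_1$ and $u_1$ differ from $g_1$ by an element of $\widetilde H^{s_m}(\Omega)$, so $\tilde u_1-u_1\in\widetilde H^{s_m}(\Omega)$. Hence
\begin{equation*}
B^*_{b_2,q_2}(u_2^*,\tilde u_1)=B^*_{b_2,q_2}(u_2^*,u_1)=B_{b_2,q_2}(u_1,u_2^*).
\end{equation*}
All these expressions are finite by the boundedness in Lemma~\ref{lm23}.

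\emph{Step 3 (subtraction).} Subtracting the two pairings, the multi-fractional terms $\sum_k((-\Delta)^{s_k/2}u_1,(-\Delta)^{s_k/2}u_2^*)_{\mathbb R^n}$ in $B_{b_1,q_1}(u_1,u_2^*)$ and $B_{b_2,q_2}(u_1,u_2^*)$ cancel, since they do not depend on the coefficients. By \eqref{eq22}, what remains is
\begin{equation*}
\int_\Omega (b_1-b_2)\cdot\nabla u_1\,u_2^*\,dx+\int_\Omega (q_1-q_2)u_1u_2^*\,dx+\int_\Omega (f_2-f_1)u_2^*\,dx,
\end{equation*}
which is the claimed identity.

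The only delicate point is Step 2. Naively, the second pairing involves the solution of the \emph{second} problem, so one must justify that it can be evaluated on $u_1$ instead. This works only because the test function is taken to be the adjoint solution $u_2^*$, which annihilates $\widetilde H^{s_m}(\Omega)$ under $B^*_{b_2,q_2}$; it also relies on the representative-independence established just before the lemma. I would check carefully that the weak formulation of the adjoint problem (Lemma~\ref{prop:wellposed}(b)) is indeed tested against all of $\widetilde H^{s_m}(\Omega)$, so that this annihilation step is legitimate.
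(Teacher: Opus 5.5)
Your proof is correct and follows the paper's argument: you derive the same two expressions for the pairings and then subtract. The one difference is how the second pairing is obtained. The paper reaches $B_{b_2,q_2}(u_1,u_2^*)-\int_\Omega f_2u_2^*\,dx$ by passing formally through an adjoint map $\Lambda^*_{b_2,q_2,f_2}$ that it never defines for the affine problem. Your Step 2 instead justifies replacing $\tilde u_1$ by $u_1$ directly, using representative independence and the fact that $B^*_{b_2,q_2}(u_2^*,\cdot)$ vanishes on $\widetilde H^{s_m}(\Omega)$, so it supplies the justification that the paper's step skips.
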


\begin{proof}
By definition,
\begin{equation*}
\langle \Lambda_{b_1,q_1,f_1} g_1,g_2\rangle
= B_{b_1,q_1}(u_1,u_2^*) - \int_\Omega f_1 u_2^*\,dx.
\end{equation*}
For the second map, we get
\begin{equation*}
\langle \Lambda_{b_2,q_2,f_2} g_1,g_2\rangle
= \langle g_1,\Lambda_{b_2,q_2,f_2}^* g_2\rangle
= B_{b_2,q_2}^*(u_2^*,u_1) - \int_\Omega f_2 u_2^*\,dx
= B_{b_2,q_2}(u_1,u_2^*) - \int_\Omega f_2 u_2^*\,dx.
\end{equation*}
Thus
\begin{equation*}
\begin{aligned}
\langle (\Lambda_{b_1,q_1,f_1}-\Lambda_{b_2,q_2,f_2})g_1,\,g_2\rangle &=\langle \Lambda_{b_1,q_1,f_1} g_1,g_2\rangle-\langle g_1,\Lambda_{b_2,q_2,f_2}^* g_2\rangle\\
&=B_{b_1,q_1}(u_1,u_2^*) - \int_\Omega f_1 u_2^*\,dx-B_{b_2,q_2}(u_1,u_2^*) + \int_\Omega f_2 u_2^*\,dx\\
&= \int_\Omega (b_1-b_2)\cdot\nabla u_1\, u_2^*\,dx\\
&+ \int_\Omega (q_1-q_2)u_1u_2^*\,dx\\
&+ \int_\Omega (f_2-f_1)u_2^*\,dx.
\end{aligned}
\end{equation*}
\end{proof}

Now we prove uniqueness.

\begin{theorem}[Uniqueness, non-homogeneous case]\label{thm:unique-nonhom}
For \(n\ge 1\) let \(\Omega\subset\mathbb R^n\) be a bounded, open, non-empty Lipschitz domain, and let \(\frac12<s_m<1\). Let \(b_j\in W^{1-s_m,\infty}(\Omega)^n\) be two drift fields, \(q_j\in L^\infty(\Omega)\) be potentials, and $f_j\in L^2(\Omega)$ be source term for \(j=1,2\). Given arbitrary open, non-empty sets \(W_1,W_2\subset\Omega_e\) suppose that the DtN maps for the equations
\begin{equation*}
(\sum_{k=1}^m (-\Delta)^{s_k}+b_j\cdot\nabla+q_j)u_j=f_j\quad\text{in }\Omega
\end{equation*}
satisfy
\begin{equation*}
\Lambda_{b_1,q_1,f_1}g=\Lambda_{b_2,q_2,f_2}g,
\quad\text{for any }g\in C_c^\infty(W_1).
\end{equation*}
Then $b_1=b_2$, $q_1=q_2$, and $f_1=f_2$ in $\Omega$.
\end{theorem}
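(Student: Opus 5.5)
The plan is to reduce to the homogeneous argument of Theorem~\ref{thm:unique-hom} by exploiting the affine structure of $\Lambda_{b,q,f}$ in $g$. Write the solution with exterior data $g$ as $u_g = u_g^0 + u_f$, as in \eqref{eq210}--\eqref{eq211}. Here $u_g^0$ solves the homogeneous problem and $u_f\in\widetilde H^{s_m}(\Omega)$ solves the source problem with zero exterior data. Then
\begin{equation*}
\langle\Lambda_{b,q,f}(g),h\rangle=\langle\Lambda_{b,q}g,h\rangle+\bigl(B_{b,q}(u_f,v_h)-\textstyle\int_\Omega f v_h\,dx\bigr),
\end{equation*}
where the bracket does not depend on $g$. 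Applying the hypothesis with $g=0\in C_c^\infty(W_1)$ shows that the $g$-independent parts coincide:
\begin{equation*}
\Lambda_{b_1,q_1,f_1}(0)=\Lambda_{b_2,q_2,f_2}(0).
\end{equation*}
Subtracting this from the identity for a general $g\in C_c^\infty(W_1)$ gives $\Lambda_{b_1,q_1}g=\Lambda_{b_2,q_2}g$ for all such $g$, at least when tested on $W_2$. Theorem~\ref{thm:unique-hom} then yields $b_1=b_2$ and $q_1=q_2$ in $\Omega$.

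Next we recover $f$. Set $b:=b_1=b_2$ and $q:=q_1=q_2$, and use Lemma~\ref{lem:alessandrini-nonhom}. Take $g_1=0$, so that $u_1=u_{f_1}$, and take $g_2\in C_c^\infty(W_2)$ with $u_2^*$ the corresponding adjoint homogeneous solution. The drift and potential terms vanish, and equality of the maps at $g=0$ gives
\begin{equation*}
\int_\Omega (f_2-f_1)\,u_2^*\,dx=0\quad\text{for all such }u_2^*.
\end{equation*}
We need the adjoint version of Lemma~\ref{lem:Runge}(a): solutions of $L^*_{b,q}u^*=0$ with exterior data in $C_c^\infty(W_2)$ have restrictions to $\Omega$ that are dense in $L^2(\Omega)$. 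Granting this, choose $u^*_{2,j}|_\Omega\to f_2-f_1$ in $L^2(\Omega)$. Since $f_2-f_1\in L^2(\Omega)$, passing to the limit gives $\|f_2-f_1\|_{L^2(\Omega)}^2=0$, hence $f_1=f_2$.

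The main obstacle is justifying the two ingredients above: the reduction from the affine map to the linear one, and adjoint Runge density. For the reduction, one must be careful that the pairing is tested only against $h$ supported in $W_2$ (representatives $v_h\in C_c^\infty(W_2)$). The key point is that $u_f$ is independent of $g$, so the difference of the affine maps is exactly the difference of the linear maps plus a constant; the constant is killed by the $g=0$ case. For adjoint Runge density, I would repeat the Hahn--Banach duality argument behind Lemma~\ref{lem:Runge}. Suppose $w\in L^2(\Omega)$ annihilates all $P^*_{b,q}g-g$. Let $\phi\in\widetilde H^{s_m}(\Omega)$ solve the primal problem $L_{b,q}\phi=w$ with zero exterior data; this uses Lemma~\ref{prop:wellposed} and Assumption~\ref{ass1}, which covers the adjoint by part (b). Then
\begin{equation*}
\phi=0\quad\text{and}\quad\sum_{k}(-\Delta)^{s_k}\phi=0\quad\text{in }W_2.
\end{equation*}
Lemma~\ref{lm26} then gives $\phi\equiv0$, and so $w=0$. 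This is the same as the primal argument with the roles of $B_{b,q}$ and $B^*_{b,q}$ exchanged via \eqref{eq28}.
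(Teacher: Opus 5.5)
Your proof is correct, but it is organized differently from the paper's.

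\textbf{The paper's route.} The paper never isolates the datum $g=0$. It works throughout with the full non-homogeneous Alessandrini identity of Lemma~\ref{lem:alessandrini-nonhom} and recovers the unknowns in the order $f$, $q$, $b$.
\begin{itemize}
\item For $f$, it writes $u_j^1=P_{b_1,q_1}g_j^1+R_{b_1,q_1}f_1$ and Runge-approximates $\psi_1-R_{b_1,q_1}f_1$ in $\widetilde H^{s_m}(\Omega)$, with $\psi_1=0$ on $\operatorname{supp}\psi_2$. The drift and potential terms then vanish only in the limit, which is why convergence in $\widetilde H^{s_m}(\Omega)$ is needed to control $\nabla u_j^1$.
\item It then reruns the choices $\psi_1\equiv 1$ and $\psi_1=x_i$ from Theorem~\ref{thm:unique-hom}, now with shifted non-homogeneous solutions, to obtain $q$ and $b$.
\end{itemize}

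\textbf{Your route.} You use the affine splitting $\Lambda_{b,q,f}(g)=\Lambda_{b,q}g+\Lambda_{b,q,f}(0)$. This reduces the recovery of $(b,q)$ to Theorem~\ref{thm:unique-hom}, used as a black box. Only afterwards do you recover $f$ from the single measurement $\Lambda_{b,q,f_1}(0)=\Lambda_{b,q,f_2}(0)$. At that stage $\int_\Omega(f_2-f_1)u^*\,dx=0$ holds exactly, with no limit in the drift term, so $L^2(\Omega)$-density of adjoint solutions is enough.

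\textbf{Comparison.} Your approach is more modular and avoids repeating the Runge arguments for non-homogeneous solutions. It also states and proves the adjoint Runge density that the paper uses tacitly: Lemma~\ref{lem:Runge} is stated only for $P_{b,q}$, yet both uniqueness proofs also approximate by adjoint solutions $u_j^{2,*}$. The paper's ordering, by contrast, treats all three unknowns within one identity and identifies $f$ without first knowing $(b,q)$.
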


\begin{proof}
From Lemma~\ref{lem:alessandrini-nonhom}, since the DtN maps are equal, the left-hand side is zero for all $g_1,g_2$. Hence, for all admissible $u_1$ and $u_2^*$, we have
\begin{equation}\label{eq:nonhom-identity}
\int_\Omega (b_1-b_2)\cdot\nabla u_1\, u_2^*\,dx
+ \int_\Omega (q_1-q_2)u_1u_2^*\,dx
+ \int_\Omega (f_2-f_1)u_2^*\,dx = 0.
\end{equation}
\emph{Step 1: Recover $f$.}
Fix an arbitrary test function $\psi_2\in C_c^\infty(\Omega)$. Choose another test function $\psi_1\in C_c^\infty(\Omega)$ such that $\psi_1\equiv 0$ on $\operatorname{supp}\psi_2$.

By Lemma \ref{lem:Runge}, there exist sequences of solutions
\((u_j^1)_{j=1}^\infty\) and \((u_j^{2,*})_{j=1}^\infty\) in \(H^{s_m}(\mathbb R^n)\) such that
\begin{equation}
\label{eq521}
\begin{aligned}
(\sum_{k=1}^m (-\Delta)^{s_k}+b_1\cdot\nabla+q_1)u_j^1=f,\quad
&\sum_{k=1}^m (-\Delta)^{s_k} u_j^{2,*}-\nabla\cdot(b_2u_j^{2,*})+q_2u_j^{2,*}=0
\quad\text{in }\Omega,\\
\operatorname{supp}(u_j^1)\subset\overline{\Omega_1},\qquad
&\operatorname{supp}(u_j^{2,*})\subset\overline{\Omega_2},\\
u_j^1|_\Omega=\psi_1+r_j^1,\qquad
&u_j^{2,*}|_\Omega=\psi_2+r_j^{2,*},
\end{aligned}
\end{equation}
where \(\Omega_1,\Omega_2\) are non-empty open sets in \(\mathbb R^n\) containing
\(\overline{\Omega}\), and \(r_j^1,r_j^{2,*}\to0\) strongly in
\(\widetilde H^{s_m}(\Omega)\) as \(j\to\infty\).

For the non-homogeneous equation, we use the superposition principle: the solution $u_j^1$ corresponding to exterior data $g_j^1$ can be written as $u_j^1=P_{b_1,q_1}g_j^1+R_{b_1,q_1}f_1$, where $P_{b_1,q_1}g_j^1$ solves \eqref{eq210} and $R_{b_1,q_1}f_1$ solves \eqref{eq211}. Since $R_{b_1,q_1}f_1$ is fixed, we can use the Runge approximation for the homogeneous equation to choose $g_j^1$ such that
\begin{equation*}
P_{b_1,q_1}g_j^1 \to \psi_1 - R_{b_1,q_1}f_1 \quad \text{in } \widetilde H^{s_m}(\Omega).
\end{equation*}
Then $u_j^1\to \psi_1$ in $\widetilde H^{s_m}(\Omega)$.

Substituting these sequences into \eqref{eq:nonhom-identity} and passing to the limit gives
\begin{equation*}
\int_\Omega (b_1-b_2)\cdot\nabla \psi_1\, \psi_2\,dx
+ \int_\Omega (q_1-q_2)\psi_1\psi_2\,dx
+ \int_\Omega (f_2-f_1)\psi_2\,dx = 0.
\end{equation*}
Since $\psi_1=0$ on $\operatorname{supp}\psi_2$, the first two integrals vanish. Therefore
\begin{equation*}
\int_\Omega (f_2-f_1)\psi_2\,dx = 0.
\end{equation*}
As $\psi_2\in C_c^\infty(\Omega)$ is arbitrary, we conclude $f_1=f_2$ a.e.\ in $\Omega$.

\emph{Step 2: Recover $q$.}
Now that $f_1=f_2$, the identity \eqref{eq:nonhom-identity} simplifies to
\begin{equation*}
\int_\Omega (b_1-b_2)\cdot\nabla u_j^1\, u_j^{2,*}\,dx
+ \int_\Omega (q_1-q_2)u_j^1u_j^{2,*}\,dx = 0.
\end{equation*}
Choose an arbitrary $\psi_2\in C_c^\infty(\Omega)$ and take $\psi_1\in C_c^\infty(\Omega)$ such that $\psi_1\equiv 1$ on $\operatorname{supp}\psi_2$. As in the homogeneous case, we approximate $\psi_1$ by a sequence $u_{1,j}$ of non-homogeneous solutions and approximate $\psi_2$ by solutions $u_{2,j}^*$ of the homogeneous adjoint equation. Passing to the limit yields
\begin{equation*}
\int_\Omega (b_1-b_2)\cdot\nabla \psi_1\, \psi_2\,dx
+ \int_\Omega (q_1-q_2)\psi_1\psi_2\,dx = 0.
\end{equation*}
The first term vanishes because $\nabla\psi_1=0$ on $\operatorname{supp}\psi_2$. Hence
\begin{equation*}
\int_\Omega (q_1-q_2)\psi_2\,dx = 0,
\end{equation*}
and since $\psi_2$ is arbitrary, $q_1=q_2$ a.e.\ in $\Omega$.

\emph{Step 3: Recover $b$.}
With $q_1=q_2$ and $f_1=f_2$, identity \eqref{eq:nonhom-identity} reduces to
\begin{equation*}
\int_\Omega (b_1-b_2)\cdot\nabla u_j^1\, u_j^{2,*}\,dx = 0.
\end{equation*}
For each coordinate $k$, choose $\psi_2\in C_c^\infty(\Omega)$ and let $\psi_{x_i}\in C_c^\infty(\Omega)$ be such that $\psi_{x_i}=x_i$ on $\operatorname{supp}\psi_2$. By the Runge approximation, we obtain sequences converging to $\psi_{x_i}$ and $\psi_2$. Substituting and passing to the limit gives
\begin{equation*}
\int_\Omega (b_1-b_2)\cdot\nabla \psi_{x_i}\, \psi_2\,dx = 0.
\end{equation*}
Since $\nabla\psi_{x_i}=e_k$ on $\operatorname{supp}\psi_2$, we get
\begin{equation*}
\int_\Omega (b_1-b_2)_k \psi_2\,dx = 0,
\end{equation*}
which implies $(b_1-b_2)_k=0$ for all $k$. Thus $b_1=b_2$.

This completes the proof of the theorem.
\end{proof}

\section{Numerical method}\label{sec5}

In this section we develop a complete numerical framework for the inverse problems studied. The method consists of three main components: (i) a finite difference solver for the multi-fractional Schr\"odinger equation with drift (the forward problem), (ii) a radial basis function (RBF) surrogate model that replaces the expensive forward solver during inversion, and (iii) a hierarchical Bayesian inversion framework that simultaneously reconstructs the unknown coefficients $b$, $q$, $f$ and quantifies the reconstruction uncertainty.

\subsection{Parameterization}

We introduce the following parameterizations,

For drift coefficient $b=(b_1,\dots,b_n)$,  potential $q$, and source term $f$, we respectively parameterize them,
 \begin{equation*}
    b_j(x)=\sum_{k=1}^{N_b} \beta_{j,k}\,\psi_k(x),\ q(x)=\sum_{k=1}^{N_q} \theta_k\,\phi_k(x),\ f(x)=\sum_{k=1}^{N_f} \eta_k\,\chi_k(x),\qquad j=1,\dots,n,
    \end{equation*}
where $\psi_k, \phi_k(x)$, and $\chi_k(x)$ are basis functions with compact support in $\Omega$.
We denote the collection of all coefficients by $\boldsymbol\beta\in\mathbb R^{M_b}$, $\boldsymbol\theta\in\mathbb R^{M_q}$, $\boldsymbol\eta\in\mathbb R^{M_f}.$

The overall parameter vector is
\begin{equation*}
\boldsymbol\xi=(\boldsymbol\beta,\boldsymbol\theta,\boldsymbol\eta)\in\mathbb R^M,\qquad M=M_b+M_q+M_f.
\end{equation*}

\subsection{Finite difference discretization of the forward problem}

Let $\Omega=(-L,L)$. Following the standard truncation approach for fractional problems \cite{Li2023, Li2025}, we introduce a truncation parameter $R>L$ and consider the truncated problem
\begin{equation}
\label{eq:truncated}
\begin{cases}
\left(\sum_{k=1}^m (-\Delta)^{s_k} + b(x)\partial_x + q(x)\right)u_R(x)=f(x), & x\in(-L,L),\\
u_R(x)=g(x), & x\in(-R,R)\setminus(-L,L),\\
u_R(x)=0, & x\in\mathbb R\setminus(-R,R),
\end{cases}
\end{equation}
where $g\in C_c^\infty(W)$ is the prescribed exterior datum supported in $W\subset\Omega_e$.

Let $N$ be a positive integer, $h=2L/N$, and define the interior grid points $x_i=-L+ih$, $i=1,\dots,N-1$.
\begin{equation*}
(-\Delta)^s u_R(x_i)
= -c_{1,s}\int_0^\infty \frac{u_R(x_i-\xi)-2u_R(x_i)+u_R(x_i+\xi)}{\xi^{1+2s}}\,d\xi.
\end{equation*}

Following the splitting-parameter approach of \cite{Li2023}, we introduce $\gamma\in(2s,2]$ and rewrite the integral as
\begin{equation*}
(-\Delta)^s u_R(x_i)
= -c_{1,s}\left(\int_0^{2L} \frac{\psi_\gamma(x_i,\xi)}{\xi^{\gamma-(1+2s)}}\,d\xi
-2u_R(x_i)\int_{2L}^\infty \frac{d\xi}{\xi^{1+2s}}
+\int_{2L}^{L+R} \frac{g(x_i-\xi)+g(x_i+\xi)}{\xi^{1+2s}}\,d\xi\right),
\end{equation*}
where $\psi_\gamma(x_i,\xi)=[u_R(x_i-\xi)-2u_R(x_i)+u_R(x_i+\xi)]/\xi^\gamma$.

The resulting finite difference approximation yields the linear system
\begin{equation}
\label{eq:fd_system_1d}
\left(\mathbf L^s+\mathbf B+\mathbf Q\right)\mathbf U_R=\mathbf b_R,
\end{equation}
where:
\begin{itemize}
    \item $\mathbf L^s\in\mathbb R^{(N-1)\times(N-1)}$ is the symmetric Toeplitz matrix discretizing $\sum_{k=1}^m (-\Delta)^{s_k}$;
    \item $\mathbf B\in\mathbb R^{(N-1)\times(N-1)}$ discretizes the drift term $b(x)\partial_x$ ;
    \item $\mathbf Q=\operatorname{diag}(q(x_1),\dots,q(x_{N-1}))$;
    \item $\mathbf U_R=(u_R(x_1),\dots,u_R(x_{N-1}))^\top$;
    \item $\mathbf b_R$ collects contributions from the exterior datum $g$ and the source $f$.
\end{itemize}

The measurement data correspond to the operator $\mathcal O(u)=\sum_{k=1}^m (-\Delta)^{s_k}u$ restricted to the exterior observation set $W_2\subset\Omega_e$. For an observation point $x_\ell\in W_2$, we have
\begin{equation*}
\mathcal O(u)(x_\ell)=\sum_{k=1}^m (-\Delta)^{s_k}u_R(x_\ell)
=\sum_{j\in\mathcal I} w_{\ell,j} U_{R,j}+\sum_{k\in\mathcal E} w_{\ell,k} g_k,
\end{equation*}
where $\mathcal I$ and $\mathcal E$ denote the index sets of interior and exterior grid points, respectively, and $w_{\ell,j}$ are the finite difference weights. In vector form,
\begin{equation}
\label{eq:obs_operator}
\mathbf h=\mathbf E_{W_2}\left(\mathbf L^s\mathbf U_R+\mathbf b_R\right),
\end{equation}
where $\mathbf E_{W_2}$ restricts to the observation points and $\mathbf b_{\text{obs}}$ accounts for the known exterior contribution.

\subsection{RBF surrogate model}

We generate $N_{\mathrm{t}}$ training points $\{\boldsymbol\xi^{(m)}\}_{m=1}^{N_{\mathrm{t}}}$ over the parameter space. For each sample, we solve the forward problem \eqref{eq:fd_system_1d} and compute the observation vector $\mathbf h^{(m)}\in\mathbb R^{N_{\mathrm{o}}}$ via \eqref{eq:obs_operator},
\begin{equation*}
\mathbf H=[\mathbf h^{(1)},\dots,\mathbf h^{(N_{\mathrm{t}})}]\in\mathbb R^{N_{\mathrm{o}}\times N_{\mathrm{t}}}.
\end{equation*}

We use the inverse multiquadric  RBF \cite{Marko-2014}.
\begin{equation*}
\varphi_m(\boldsymbol\xi)
=\frac{1}{\sqrt{\|\boldsymbol\xi-\boldsymbol\xi^{(m)}\|_2^2+\sigma^2}},
\end{equation*}
where
\begin{equation*}
\sigma=\frac{1}{N_t(N_t-1)}
\sum_{i=1}^{N_t}\sum_{j\ne i}
\|\boldsymbol\xi^{(i)}-\boldsymbol\xi^{(j)}\|_2.
\end{equation*}
Let
\begin{equation*}
\boldsymbol\varphi(\boldsymbol\xi)
=[\varphi_1(\boldsymbol\xi),\dots,\varphi_{N_t}(\boldsymbol\xi)]^T,
\end{equation*}
and let
\begin{equation*}
\mathbf F_{i,n}=\varphi_{n}(\boldsymbol\xi^{(i)}).
\end{equation*}
The coefficient matrix $\mathbf C\in\mathbb R^{S\times N_t}$ is determined by
\begin{equation*}
\mathbf H=\mathbf C\mathbf F.
\end{equation*}
When $\mathbf F$ is well conditioned,
\begin{equation*}
\mathbf C=\mathbf H\mathbf F^{-1}.
\end{equation*}
Otherwise, we use the regularized solution
\begin{equation*}
\mathbf C
=\mathbf H\mathbf F^T
\left(\mathbf F\mathbf F^T+\lambda_{\mathrm{RBF}}\mathbf I\right)^{-1},
\end{equation*}
with a small $\lambda_{\mathrm{RBF}}>0$.

The surrogate prediction takes the form
\begin{equation}
\label{eq:surrogate}
\mathbf h_p(\boldsymbol\xi)
=\mathbf C\boldsymbol\varphi(\boldsymbol\xi)
=\sum_{n=1}^{N_t}\mathbf c_n\varphi_n(\boldsymbol\xi),
\end{equation}
where $\mathbf c_n$ is the $n$-th column of $\mathbf C$.

\subsection{Bayesian inversion }
There are many methods for reconstructing inverse problems, such as Tikhonov regularization, Landweber iteration, etc. In contrast, the Bayesian inverse problem method treats all variables as random variables and encodes prior knowledge through probability distributions\cite{Kaipio-2005,Stuart-2010}. The solution is the posterior distribution. It not only for quantitative parameter uncertainty provides a coherent framework, also to the natural adaptation of various data types and complex priori information model. For more articles on Bayesian methods, please refer to \cite{Giordano-2026, Marko-2014, Wang+Bardsley-2017,Lassas+Saksman-2009,Kolehmainen-2012}.

In our article, based on the prior assumption of the continuity of the potential function, we introduced the ARD prior and the TV prior separately into the hierarchical Bayesian framework. To avoid the high cost of repeatedly solving fractional partial differential equations during the inference process, we pre-built a forward mapping radial basis function substitution model and incorporated the approximation error of this model into the likelihood function. Subsequently, the Laplace approximation of the posterior distribution provided pointwise confidence intervals for the reconstructed potential function.

We now formulate the Bayesian inverse problem for reconstructing the parameter vector
\begin{equation*}
\boldsymbol\xi=(\boldsymbol\beta,\boldsymbol\theta,\boldsymbol\eta)\in\mathbb R^M
\end{equation*}
from noisy observations
\begin{equation*}
\mathbf h^\delta=\mathbf F(\boldsymbol\xi)+\boldsymbol\varepsilon,\qquad \boldsymbol\varepsilon\sim\mathcal N(\mathbf 0,\delta^2\mathbf I).
\end{equation*}
Combining measurement noise and surrogate error, the total observation covariance is
\begin{equation*}
\boldsymbol\Gamma=\delta^2\mathbf I.
\end{equation*}
The likelihood function is
\begin{equation}
\label{eq:likelihood-global}
p(\mathbf h^\delta\mid\boldsymbol\xi)\propto
\exp\left(-\frac12\bigl(\mathbf h_p(\boldsymbol\xi)-\mathbf h^\delta\bigr)^\top
\boldsymbol\Gamma^{-1}
\bigl(\mathbf h_p(\boldsymbol\xi)-\mathbf h^\delta\bigr)\right).
\end{equation}
If the unknown function is smooth, we use the ARD Gaussian prior,
\begin{equation}
\label{eq:prior-global-ard}
p(\boldsymbol\xi\mid\boldsymbol\alpha)
=
\mathcal N\bigl(\boldsymbol\xi\mid\mathbf 0,
\operatorname{diag}(\boldsymbol\alpha^{-1})\bigr),
\end{equation}
where
\begin{equation*}
\boldsymbol\alpha=(\alpha_1,\dots,\alpha_M),
\qquad
\alpha_j\sim\mathrm{Gamma}(a_0,b_0).
\end{equation*}
Equivalently,
\begin{equation*}
p(\boldsymbol\xi\mid\boldsymbol\alpha)
\propto
\exp\left(
-\frac12\sum_{j=1}^{M}\alpha_j\xi_j^2
\right).
\end{equation*}
When the unknown function is discontinuous, we use theTV-GSM prior \cite{Marko-2014}. Let
\begin{equation*}
L=
\begin{pmatrix}
L_b & 0 & 0\\
0 & L_q & 0\\
0 & 0 & L_f
\end{pmatrix}
\in\mathbb R^{K\times M},
\end{equation*}
be the block first-difference matrix, where \(L_b\in\mathbb R^{K_b\times M_b}\), \(L_q\in\mathbb R^{K_q\times M_q}\), and \(L_f\in\mathbb R^{K_f\times M_f}\) are the first-difference matrices associated with the basis functions of \(b\), \(q\), and \(f\), respectively, and \(K=K_b+K_q+K_f\). Let the vector
\begin{equation*}
\mathbf z=L\boldsymbol\xi\in\mathbb R^K
\end{equation*}
be a discontinuous one. The TV-GSM prior takes the form
\begin{equation}
\label{eq:prior-global-tv}
p(\boldsymbol\xi\mid\mathbf r,\lambda_1)
\propto
\exp\left(
-\sum_{i=1}^K\frac{z_i^2}{2r_i}
-\frac{\lambda_1}{2}\sum_{i=1}^K r_i
\right),
\end{equation}
with latent scale variables \(r_i>0\).

We place Gamma hyperpriors on the ARD precisions,
\begin{equation*}
\alpha_j\sim\mathrm{Gamma}(a_0,b_0),\qquad j=1,\dots,M,
\end{equation*}
and on the TV regularization parameter,
\begin{equation*}
\lambda_1\sim\mathrm{Gamma}(a_\lambda,b_\lambda).
\end{equation*}
The latent variables \(r_i>0\) are treated as unknown hyperparameters and updated in the IAS algorithm.

Let
\begin{equation*}
\boldsymbol\Theta=
\begin{cases}
\boldsymbol\alpha, & \text{in smooth},\\
(\mathbf r,\lambda_1), & \text{in discontinuous}.
\end{cases}
\end{equation*}
By Bayes' theorem, the joint posterior density of the parameters and hyperparameters is
\begin{equation}
\label{eq:posterior-global}
p(\boldsymbol\xi,\boldsymbol\Theta\mid\mathbf h^\delta)
\propto
p(\mathbf h^\delta\mid\boldsymbol\xi)\,
p(\boldsymbol\xi\mid\boldsymbol\Theta)\,
p(\boldsymbol\Theta),
\end{equation}
where \(p(\mathbf h^\delta\mid\boldsymbol\xi)\) is the likelihood \eqref{eq:likelihood-global}, \(p(\boldsymbol\xi\mid\boldsymbol\Theta)\) is either the ARD prior \eqref{eq:prior-global-ard} or the TV-GSM prior \eqref{eq:prior-global-tv}, \(p(\boldsymbol\Theta)\) is the product of the Gamma hyperpriors.

Thus, the negative log-posterior is
\begin{equation}
\label{eq:neg-log-posterior-global}
\mathcal J(\boldsymbol\xi,\boldsymbol\Theta)
=
\Phi_{1}(\boldsymbol\xi)
+
\Phi_{2}(\boldsymbol\xi,\boldsymbol\Theta)
+
\Phi_{3}(\boldsymbol\Theta),
\end{equation}
\begin{equation*}
\Phi_{1}(\boldsymbol\xi)
=
\frac12
\bigl(\mathbf h_p(\boldsymbol\xi)-\mathbf h^\delta\bigr)^\top
\boldsymbol\Gamma^{-1}
\bigl(\mathbf h_p(\boldsymbol\xi)-\mathbf h^\delta\bigr).
\end{equation*}
\begin{equation*}
\Phi_{2}(\boldsymbol\xi,\boldsymbol\alpha)
=
\frac12\sum_{j=1}^M\alpha_j\xi_j^2
+\sum_{j=1}^M\bigl[b_0\alpha_j-(a_0-\tfrac12)\log\alpha_j\bigr],
\end{equation*}
\begin{equation*}
\Phi_{3}(\boldsymbol\xi,\mathbf r,\lambda_1)
=
\sum_{i=1}^K\frac{z_i^2}{2r_i}
+\frac{\lambda_1}{2}\sum_{i=1}^K r_i
-\frac12\sum_{i=1}^K\log r_i,
\qquad z_i=(L\boldsymbol\xi)_i.
\end{equation*}

We compute the maximum a posteriori (MAP) estimate using the iterative alternating sequential (IAS) algorithm \cite{Calvetti-2008, Marko-2014},
\begin{enumerate}
    \item Update \(\boldsymbol\xi\).
    Fix all hyperparameters and minimize \(\mathcal J\) with respect to \(\boldsymbol\xi\) using a Gauss--Newton method. The Jacobian \(\partial\mathbf h_p/\partial\boldsymbol\xi\) is computed analytically from the RBF surrogate,
    \begin{equation*}
    \frac{\partial\mathbf h_p}{\partial\xi_j}
    =\mathbf C\,\frac{\partial\boldsymbol\varphi}{\partial\xi_j},
    \end{equation*}
    with, for the inverse multiquadric kernel,
    \begin{equation*}
    \frac{\partial\varphi_m}{\partial\xi_j}
    =-\frac{\xi_j-\xi_j^{(m)}}{\sigma^2\bigl(r_m^2+1\bigr)^{3/2}},
    \qquad
    r_m^2=\frac{\|\boldsymbol\xi-\boldsymbol\xi^{(m)}\|_2^2}{\sigma^2}.
    \end{equation*}
    The Gauss--Newton step solves
    \begin{equation*}
    \Bigl(\mathbf J^\top\boldsymbol\Gamma^{-1}\mathbf J
    +\boldsymbol\Lambda\Bigr)\Delta\boldsymbol\xi
    =-\nabla_{\boldsymbol\xi}\mathcal J,
    \end{equation*}
    where \(\mathbf J=\partial\mathbf h_p/\partial\boldsymbol\xi\) and \(\boldsymbol\Lambda\) is the Hessian of the prior term (diagonal for ARD, tridiagonal for TV-GSM via \(L^\top\operatorname{diag}(\mathbf r^{-1})L\)).

    \item Update latent variables \(\mathbf r\) (discontinuous):
\begin{equation*}
r_i=\frac{\sqrt{1+4\lambda_1 z_i^2}-1}{2\lambda_1}.
\end{equation*}

    \item Update ARD precisions (smooth):
\begin{equation*}
\alpha_j=\frac{a_0-\frac12}{b_0+\frac12\xi_j^2},
\qquad j=1,\dots,M.
\end{equation*}

    \item Update TV regularization parameters (discontinuous):
    \begin{equation*}
    \lambda_1=\frac{a_{\lambda_1}+K-2}{b_{\lambda_1}+\frac12\sum_i r_i},
    \end{equation*}
    where \(K\) is the number of jumps.
\end{enumerate}

The iteration is terminated when the relative change in \(\mathcal J\) falls below a prescribed tolerance or when a maximum number of sweeps is reached.

\subsection{Uncertainty quantification}

We approximate the posterior distribution by a Gaussian using the Laplace approximation. The posterior covariance is $\boldsymbol\Sigma_{p}=\mathbf H_{p}^{-1}$, where
\begin{equation*}
\mathbf H_{p}=\nabla^2_{\boldsymbol\xi}\mathcal J(\boldsymbol\xi^*)
\approx \mathbf J(\boldsymbol\xi^*)^\top\boldsymbol\Gamma^{-1}\mathbf J(\boldsymbol\xi^*)+\mathbf H_{q}
\end{equation*}
is the Hessian of the negative log-posterior, $\mathbf J=\partial\mathbf h_p/\partial\boldsymbol\xi$ is the Jacobian, and $\mathbf H_{q}$ is the Hessian of the prior contribution.

For any quantity $g(\boldsymbol\xi)$, the posterior variance is approximated by the delta method:
\begin{equation*}
\operatorname{Var}[g(\boldsymbol\xi)\mid\mathbf h^\delta]
\approx \nabla_{\boldsymbol\xi}g(\boldsymbol\xi^*)^\top\boldsymbol\Sigma_{p}\nabla_{\boldsymbol\xi}g(\boldsymbol\xi^*).
\end{equation*}
The $95\%$ pointwise credible interval is therefore
\begin{equation*}
g(\boldsymbol\xi^*)\pm 1.96\sqrt{\operatorname{Var}[g(\boldsymbol\xi)\mid\mathbf h^\delta]}.
\end{equation*}
The complete numerical procedure is summarized as follows,

\begin{algorithm}[H]
\caption{Bayesian inversion for multi-fractional Calder\'on problem with drift}
\begin{algorithmic}[1]
\STATE \quad Generate $N_{\mathrm{t}}$ training parameters $\{\boldsymbol\xi^{(m)}\}$.
\STATE \quad For each $\boldsymbol\xi^{(m)}$, solve \eqref{eq:fd_system_1d} using the finite difference scheme.
\STATE \quad Compute observations $\mathbf h^{(m)}$.
\STATE \quad Build RBF surrogate \eqref{eq:surrogate}.
\STATE \quad Initialize $\boldsymbol\xi^{(0)}$, hyperparameters.
\STATE \quad \textbf{Repeat} until convergence:
\STATE \qquad Update $\boldsymbol\xi$ by Gauss-Newton minimization.
\STATE \qquad Update latent variables and hyperparameters by closed-form formulas.
\STATE \quad Compute posterior covariance $\boldsymbol\Sigma_{p}=\mathbf H_{p}^{-1}$.
\STATE \quad Compute pointwise credible intervals via the delta method.
\STATE \textbf{return} MAP estimate $\boldsymbol\xi^*$ and credible intervals.
\end{algorithmic}
\end{algorithm}

\section{Numerical examples}\label{sec6}

In this section, we present four numerical examples to illustrate the proposed inversion framework. The first two examples correspond to the homogeneous case $f=0$, where we simultaneously recover the drift coefficient $b$ and the potential $q$. The last two examples deal with the non-homogeneous case $f\neq 0$, in which all three unknowns $b$, $q$, and $f$ are reconstructed. In all examples, the forward problems are solved by the finite difference scheme described in Section~\ref{sec5}, and the inversions are carried out using the RBF surrogate model combined with hierarchical Bayesian inference.

Throughout this section, we use the following notation. The spatial domain is denoted by $\Omega$, and the exterior measurement region is $W_2\subset\Omega_e$. The exterior data are taken from the set $W_1\subset\Omega_e$, which we choose equal to $W_2$. The observation operator is
\begin{equation*}
\mathcal{O}(u)=\sum_{k=1}^{m}(-\Delta)^{s_k}u\Big|_{W_2}.
\end{equation*}
All measurements are corrupted by additive Gaussian noise with standard deviation $\delta$.

\textbf{Example 1:} one-dimensional smooth coefficients, $f=0$.
Let $\Omega=(-1,1)$ and choose two fractional orders
\begin{equation*}
s_1=0.3,\qquad s_2=0.7.
\end{equation*}
The drift coefficient is
\begin{equation*}
b(x)=0.4\sin(\pi x),
\end{equation*}
and the potential is
\begin{equation*}
q(x)=\sin(2\pi x)+0.5\cos(\pi x).
\end{equation*}
The source term, $f(x)\equiv 0$.

The exterior data set is
\begin{equation*}
W_1=(-3,-1-\epsilon)\cup(1+\epsilon,3),\qquad \epsilon=0.05,
\end{equation*}
and we use $J=6$ exterior data functions
\begin{equation*}
g_j(x)=\exp\bigl(-30(x-x_j)^2\bigr),\qquad j=1,\dots,6,
\end{equation*}
where the points $x_j$ are uniformly distributed in $W_1$. The observation set is $W_2=W_1$, and the measurement operator is
\begin{equation*}
\mathcal{O}(u)=\left((-\Delta)^{0.3}u+(-\Delta)^{0.7}u\right)\Big|_{W_2}.
\end{equation*}

For each exterior datum $g_j$, we solve the homogeneous problem
\begin{equation*}
\begin{cases}
\bigl((-\Delta)^{0.3}+(-\Delta)^{0.7}+b(x)\partial_x+q(x)\bigr)u_j=0, & x\in(-1,1),\\
u_j=g_j, & x\in W_1,
\end{cases}
\end{equation*}
and collect the measurements $\mathcal{O}(u_j)$. In Figure \ref{fig1}, the three figures (a), (b), and (c) respectively show the inversion images of the convective term $b$ at error levels $\delta=10^{-3}, 5\times10^{-4}, 10^{-4}$, while figures (d), (e), and (f) respectively display the inversion images of the dynamic term $q$ at the same error levels.
\begin{figure}[htbp]
	\centering
    \captionsetup{font={small}}
	{
		\begin{minipage}{0.3\linewidth}
			\centering
			\includegraphics[height=0.7\columnwidth,width=0.9\linewidth]{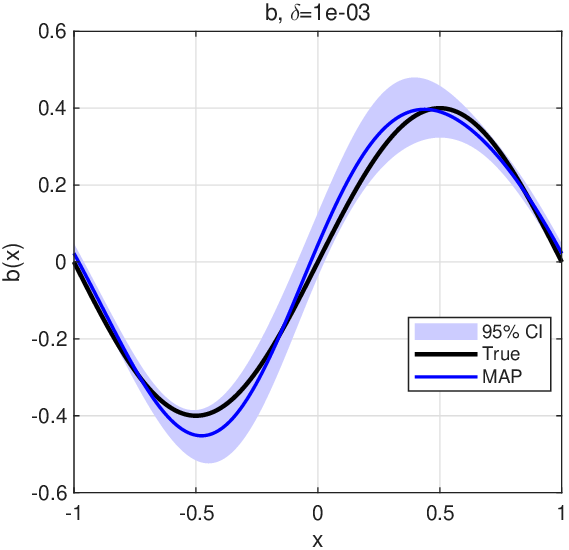}\\
			(a)
		\end{minipage}
		\begin{minipage}{0.3\linewidth}
			\centering
			\includegraphics[height=0.7\columnwidth,width=0.9\linewidth]{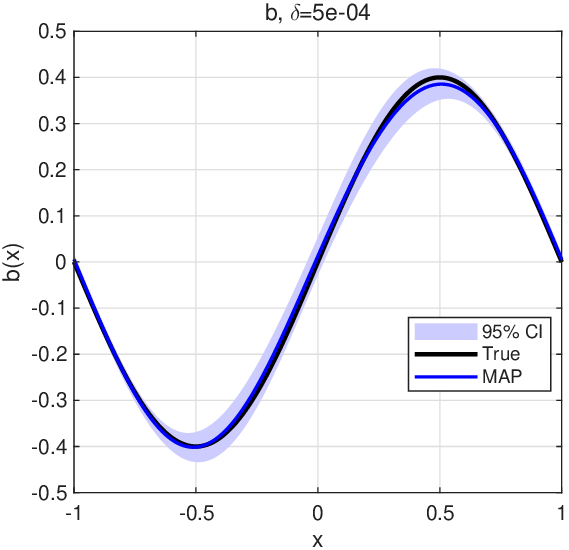}\\
			(b)
		\end{minipage}
		\begin{minipage}{0.3\linewidth}
			\centering
			\includegraphics[height=0.7\columnwidth,width=0.9\linewidth]{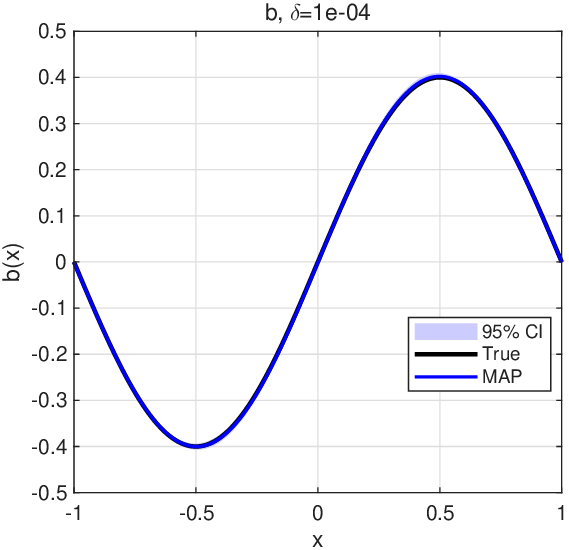}\\
			(c)
		\end{minipage}
	}
\\
	{
		\begin{minipage}{0.3\linewidth}
			\centering
			\includegraphics[height=0.7\columnwidth,width=0.9\linewidth]{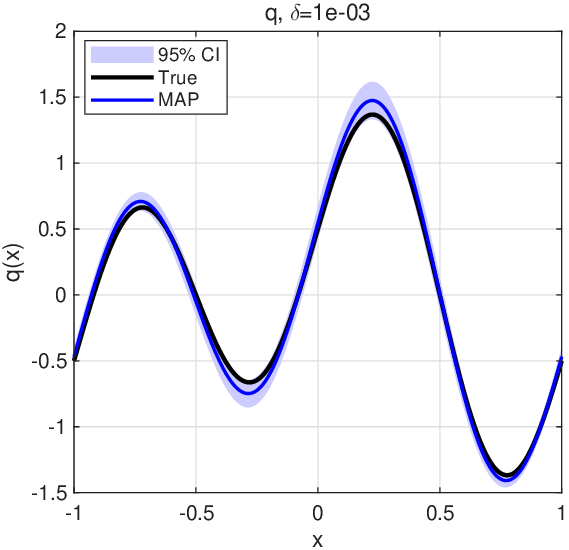}\\
			(d)
		\end{minipage}
		\begin{minipage}{0.3\linewidth}
			\centering
			\includegraphics[height=0.7\columnwidth,width=0.9\linewidth]{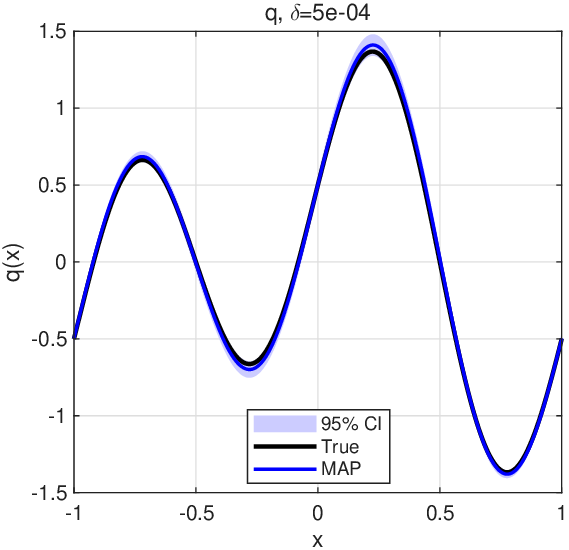}\\
			(e)
		\end{minipage}
		\begin{minipage}{0.3\linewidth}
			\centering
			\includegraphics[height=0.7\columnwidth,width=0.9\linewidth]{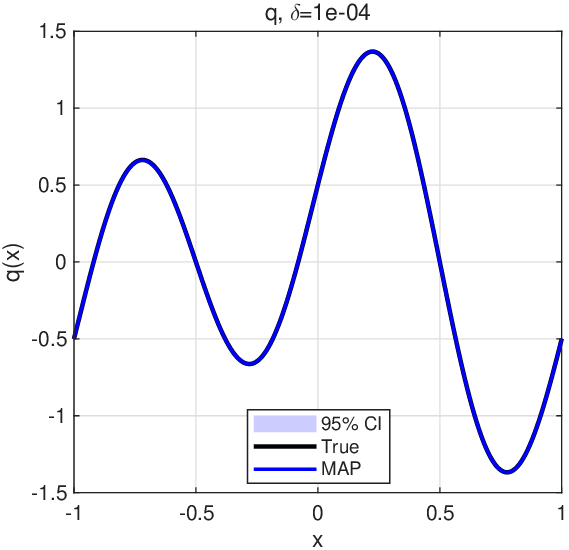}\\
			(f)
		\end{minipage}
	}
\caption{The numerical results for Example 1.}
	\label{fig1}
\end{figure}

\textbf{Example 2:} one-dimensional discontinuous potential, $f=0$.
Let $\Omega=(-1,1)$ and choose
\begin{equation*}
s_1=0.4,\qquad s_2=0.8.
\end{equation*}
The  drift is,
\begin{equation*}
b(x)=
\begin{cases}
1+x, & x\le 0,\\
1-x, & \text{otherwise}.
\end{cases}
\end{equation*}
and the potential is
\begin{equation*}
q(x)=
\begin{cases}
1, & |x|\le 0.5,\\
0, & \text{otherwise}.
\end{cases}
\end{equation*}
The source term $f(x)\equiv 0$.

As in Example~1, we take
\begin{equation*}
W_1=(-3,-1-\epsilon)\cup(1+\epsilon,3),\qquad \epsilon=0.05,
\end{equation*}
and use $J=8$ exterior data functions
\begin{equation*}
g_j(x)=\exp\bigl(-50(x-x_j)^2\bigr),\qquad j=1,\dots,8,
\end{equation*}
with $x_j\in W_1$ uniformly distributed. The observation set is $W_2=W_1$, and
\begin{equation*}
\mathcal{O}(u)=\left((-\Delta)^{0.4}u+(-\Delta)^{0.8}u\right)\Big|_{W_2}.
\end{equation*}
For each $g_j$, we solve
\begin{equation*}
\begin{cases}
\bigl((-\Delta)^{0.4}+(-\Delta)^{0.8}+b(x)\partial_x+q(x)\bigr)u_j=0, & x\in(-1,1),\\
u_j=g_j, & x\in W_1,
\end{cases}
\end{equation*}
and measure $\mathcal{O}(u_j)$. In Figure \ref{fig2}, the four figures (a), (b), (c), (d)respectively show the inversion images of the convective term $b$ and term $q$ at error levels $\delta=10^{-5},\ 10^{-6}$.
\begin{figure}[htbp]
	\centering
    \captionsetup{font={small}}
	{
		\begin{minipage}{0.45\linewidth}
			\centering
			\includegraphics[height=0.7\columnwidth,width=0.9\linewidth]{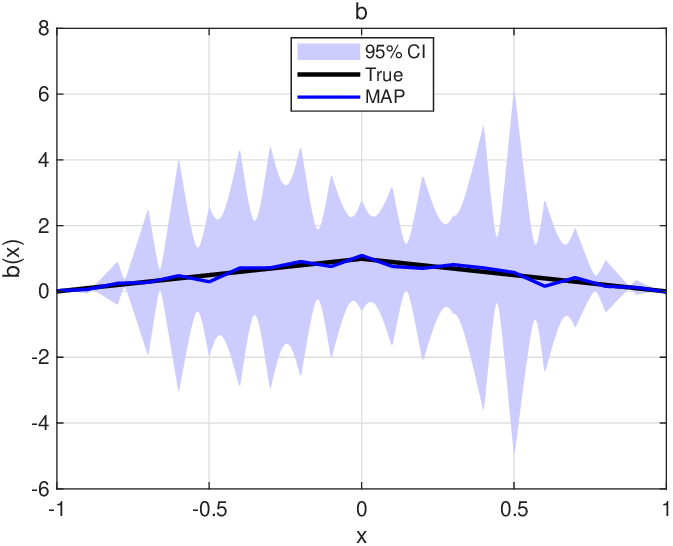}\\
		(a)
		\end{minipage}
		\begin{minipage}{0.45\linewidth}
			\centering
			\includegraphics[height=0.7\columnwidth,width=0.9\linewidth]{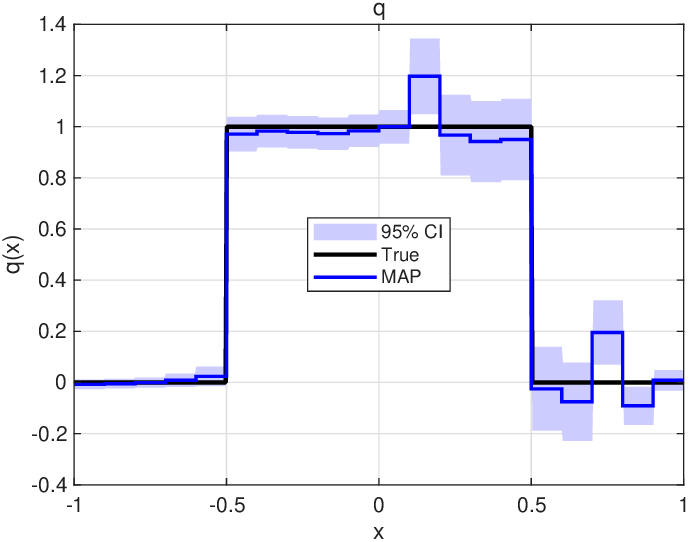}\\
		(b)
		\end{minipage}
	}
\\
	{
		\begin{minipage}{0.45\linewidth}
			\centering
			\includegraphics[height=0.7\columnwidth,width=0.9\linewidth]{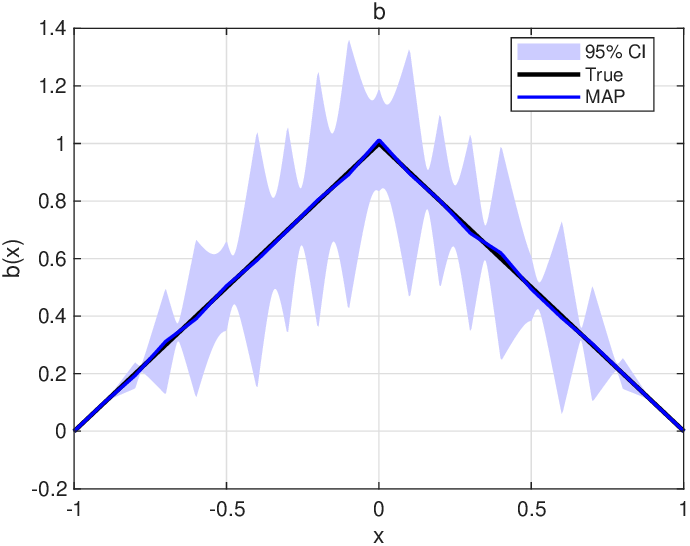}\\
		(c)
		\end{minipage}
		\begin{minipage}{0.45\linewidth}
			\centering
			\includegraphics[height=0.7\columnwidth,width=0.9\linewidth]{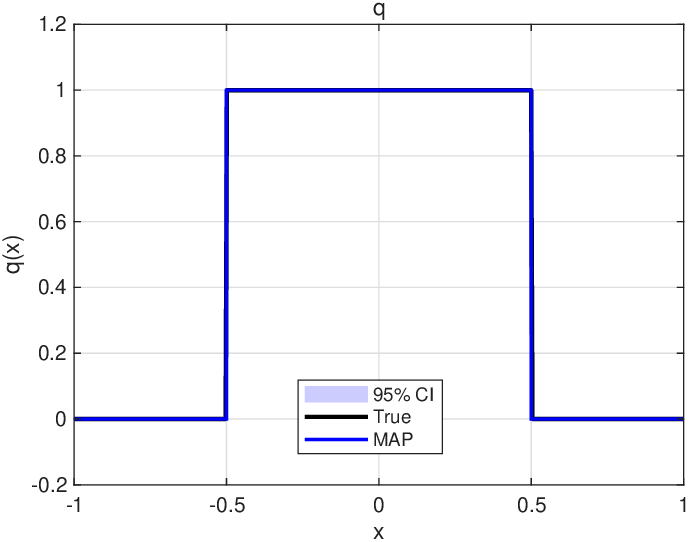}\\
		(d)
		\end{minipage}
	}
\caption{The numerical results for Example 2.}
	\label{fig2}
\end{figure}

\textbf{Example 3:} one-dimensional non-homogeneous source, simultaneous recovery of $b,q,f$.
Let $\Omega=(-1,1)$ and choose
\begin{equation*}
s_1=0.3,\qquad s_2=0.65.
\end{equation*}
The coefficients are
\begin{equation*}
b(x)=0.3\cos(2\pi x),\qquad
q(x)=0.5\sin(\pi x)+0.2,
\end{equation*}
and the source term is
\begin{equation*}
f(x)=2\,\mathbf{1}_{(-0.4,0.4)}(x)\,e^{-20x^2}.
\end{equation*}
The exterior data set is the same as before:
\begin{equation*}
W_1=(-3,-1-\epsilon)\cup(1+\epsilon,3),\qquad \epsilon=0.05,
\end{equation*}
and we use $J=8$ exterior data functions
\begin{equation*}
g_j(x)=\exp\bigl(-40(x-x_j)^2\bigr),\qquad j=1,\dots,8.
\end{equation*}
The observation set is $W_2=W_1$, and the measurement operator is
\begin{equation*}
\mathcal{O}(u)=\left((-\Delta)^{0.3}u+(-\Delta)^{0.65}u\right)\Big|_{W_2}.
\end{equation*}
For each $g_j$, we solve the non-homogeneous problem
\begin{equation*}
\begin{cases}
\bigl((-\Delta)^{0.3}+(-\Delta)^{0.65}+b(x)\partial_x+q(x)\bigr)u_j=f(x), & x\in(-1,1),\\
u_j=g_j, & x\in W_1,
\end{cases}
\end{equation*}
and collect the measurements $\mathcal{O}(u_j)$. In Figure \ref{fig3}, the three figures (a), (b), and (c) respectively show the inversion images of the convective term $b$, the potential term $q$, and the source term $f$ at the error level of  $\delta=10^{-3}$. While figures (d), (e), and (f) respectively present the inversion images at the error level of $\delta=10^{-4}$.
\begin{figure}[htbp]
	\centering
    \captionsetup{font={small}}
    	{
		\begin{minipage}{0.3\linewidth}
			\centering
			\includegraphics[height=0.7\columnwidth,width=0.9\linewidth]{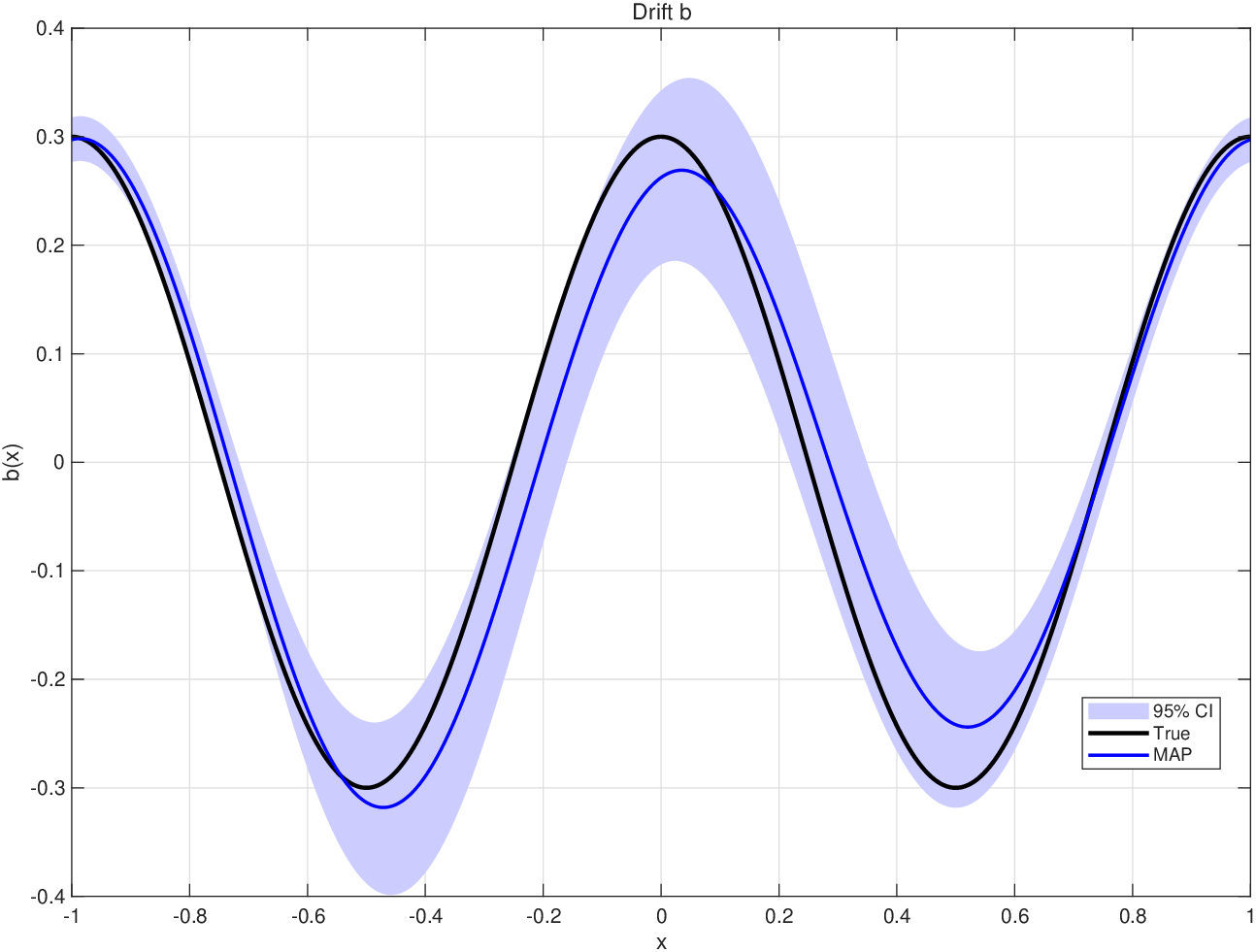}\\
		(a)
		\end{minipage}
		\begin{minipage}{0.3\linewidth}
			\centering
			\includegraphics[height=0.7\columnwidth,width=0.9\linewidth]{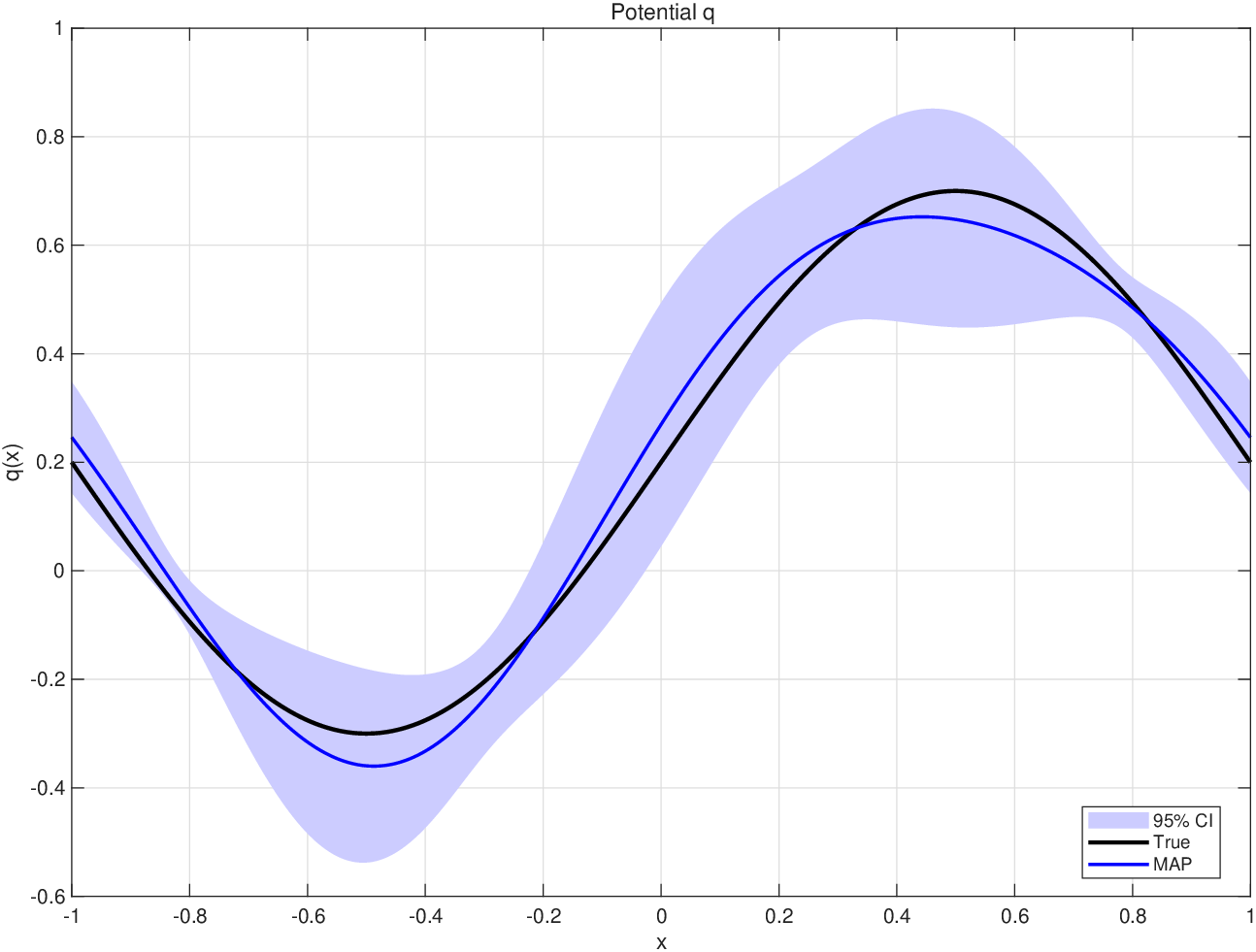}\\
		(b)
		\end{minipage}
		\begin{minipage}{0.3\linewidth}
			\centering
			\includegraphics[height=0.7\columnwidth,width=0.9\linewidth]{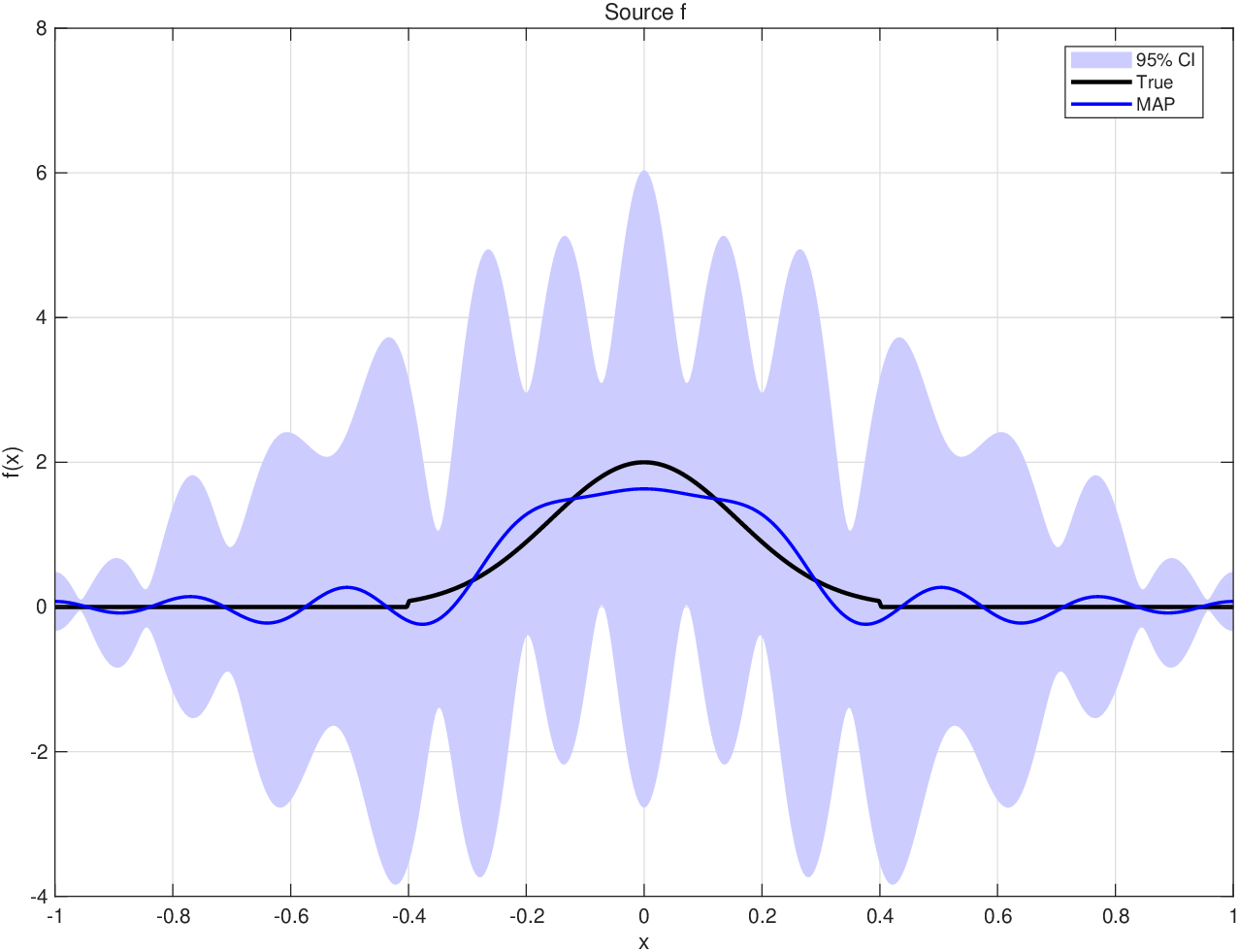}\\
		(c)
		\end{minipage}
}
\\
	{
		\begin{minipage}{0.3\linewidth}
			\centering
			\includegraphics[height=0.7\columnwidth,width=0.9\linewidth]{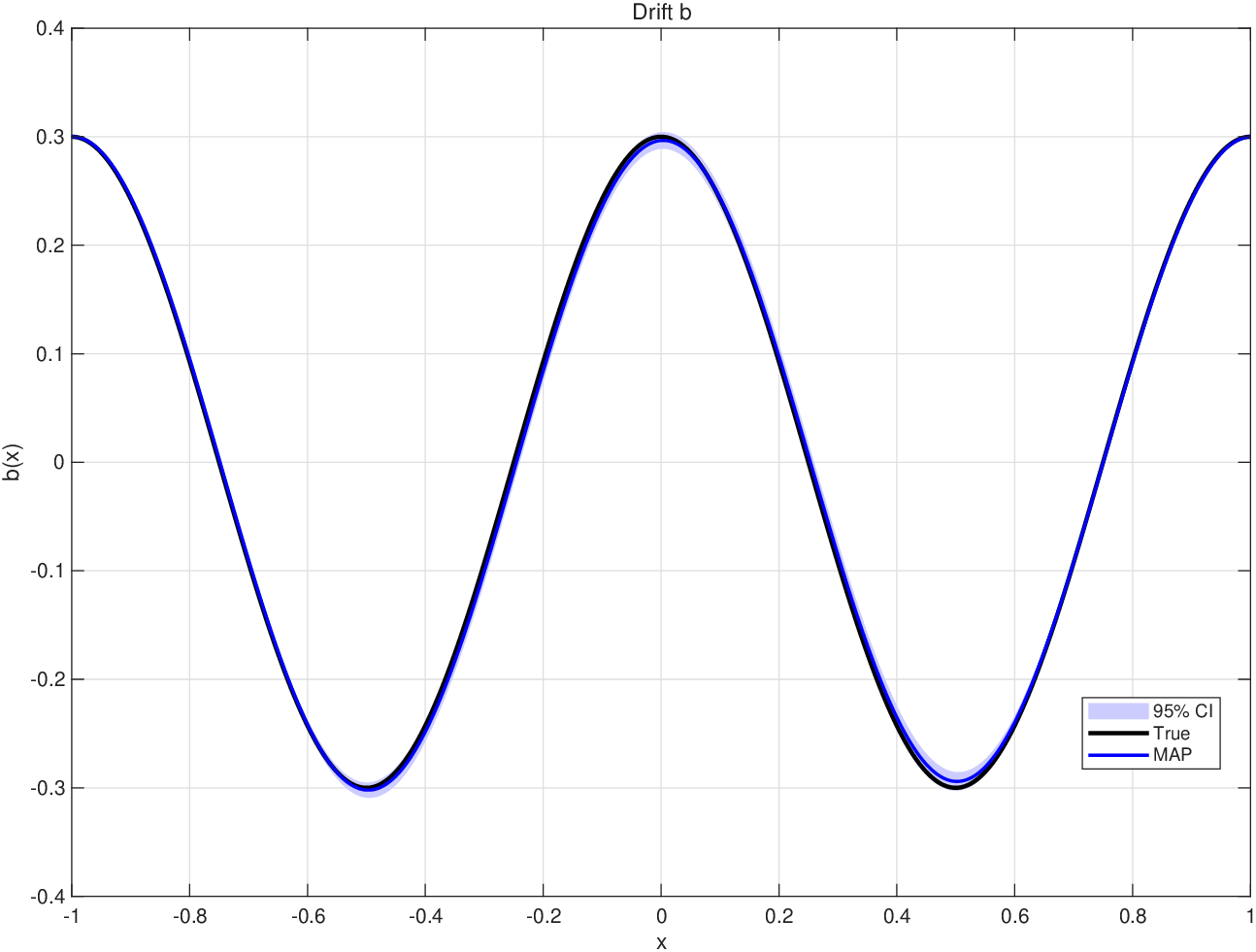}\\
		(d)
		\end{minipage}
		\begin{minipage}{0.3\linewidth}
			\centering
			\includegraphics[height=0.7\columnwidth,width=0.9\linewidth]{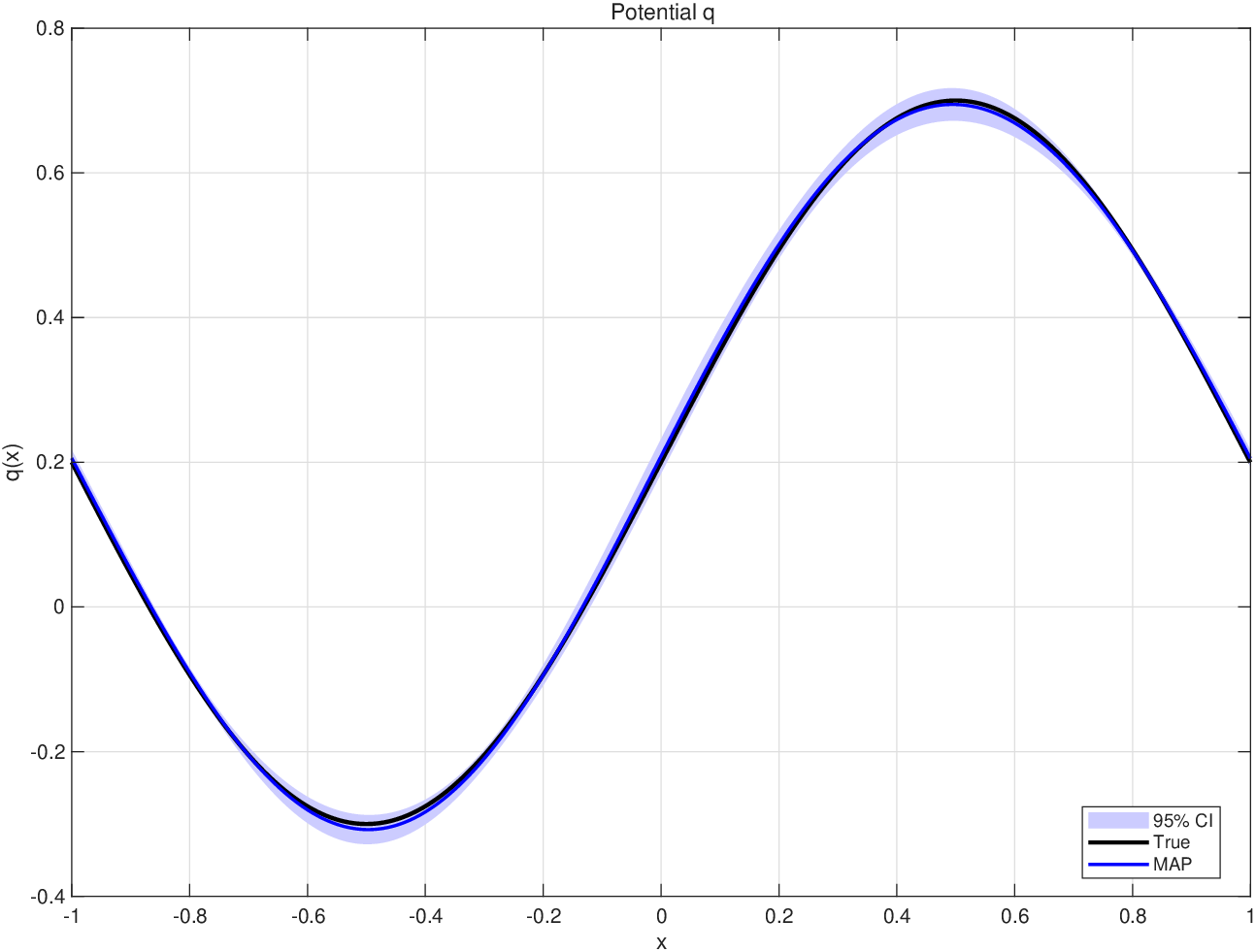}\\
		(e)
		\end{minipage}
		\begin{minipage}{0.3\linewidth}
			\centering
			\includegraphics[height=0.7\columnwidth,width=0.9\linewidth]{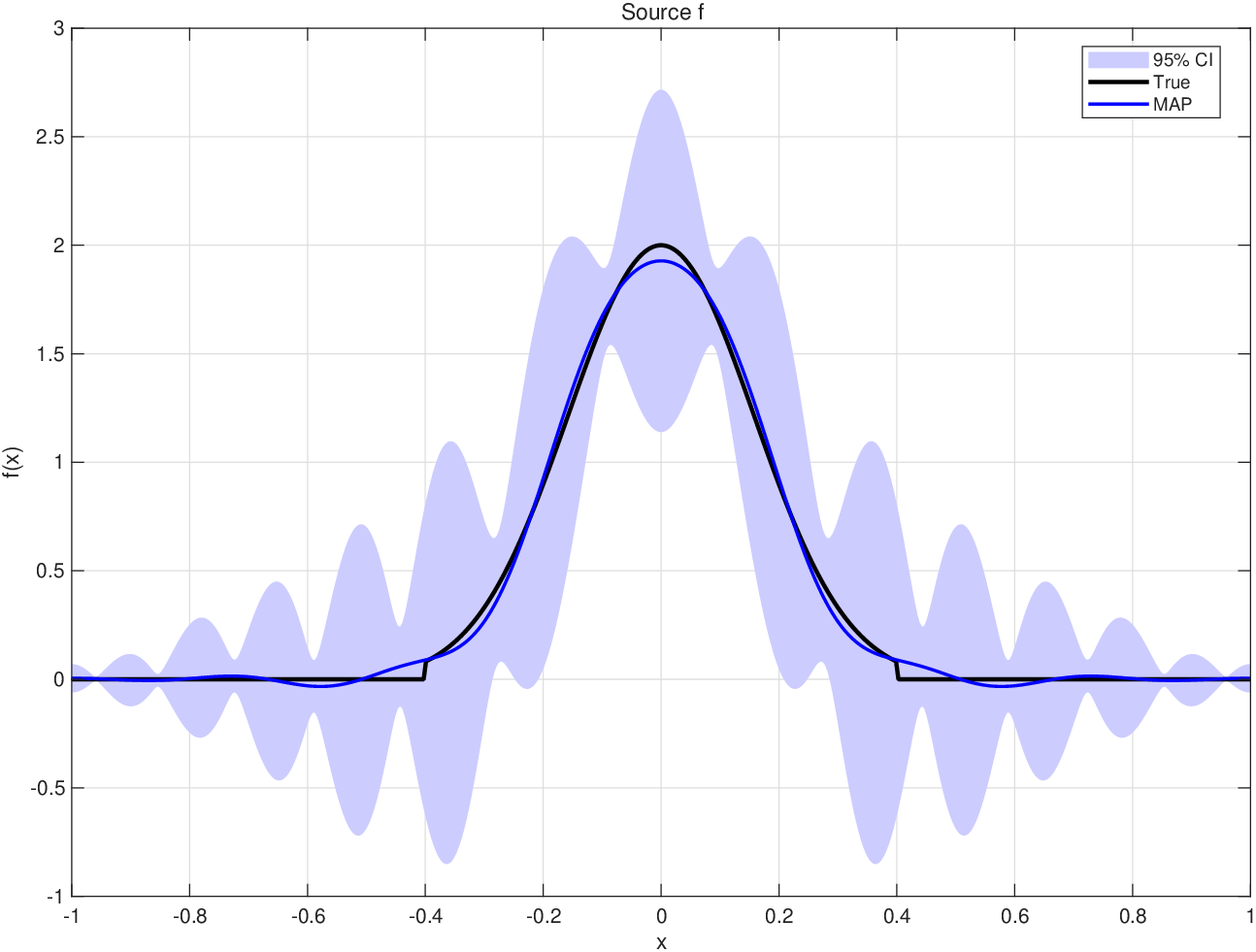}\\
		(f)
		\end{minipage}
	}
\caption{The numerical results for Example 3.}
	\label{fig3}
\end{figure}

\textbf{Example 4:} one-dimensional non-homogeneous case with discontinuous potential and source. Let $\Omega=(-1,1)$ and choose two fractional orders
\begin{equation*}
s_1=0.4,\qquad s_2=0.7.
\end{equation*}
The drift coefficient is
\begin{equation*}
b(x)=0.3\sin(2\pi x),
\end{equation*}
the potential is
\begin{equation*}
q(x)=
\begin{cases}
2, & |x|\le 0.4,\\
0, & \text{otherwise},
\end{cases}
\end{equation*}
and the true source term is
\begin{equation*}
f(x)=
\begin{cases}
-1, & -0.6 < x < -0.2,\\
1.5, & 0.2 < x < 0.6,\\
0, & \text{otherwise}.
\end{cases}
\end{equation*}
The exterior data set is
\begin{equation*}
W_1=(-3,-1-\epsilon)\cup(1+\epsilon,3),\qquad \epsilon=0.05,
\end{equation*}
and we use $J=8$ exterior data functions
\begin{equation*}
g_j(x)=\exp\bigl(-45(x-x_j)^2\bigr),\qquad j=1,\dots,8,
\end{equation*}
with $x_j$ uniformly distributed in $W_1$. The observation set is $W_2=W_1$, and the measurement operator is
\begin{equation*}
\mathcal{O}(u)=\left((-\Delta)^{0.4}u+(-\Delta)^{0.7}u\right)\Big|_{W_2}.
\end{equation*}
For each exterior datum $g_j$, we solve the non-homogeneous problem
\begin{equation*}
\begin{cases}
\bigl((-\Delta)^{0.4}+(-\Delta)^{0.7}+b(x)\partial_x+q(x)\bigr)u_j=f(x), & x\in(-1,1),\\
u_j=g_j, & x\in W_1,
\end{cases}
\end{equation*}
and collect the measurements $\mathcal{O}(u_j)$. In Figure \ref{fig4}, the three figures (a), (b), and (c) respectively show the inversion images of the convective term $b$, the potential term $q$, and the source term $f$ at the error level of  $\delta=10^{-4}$. While figures (d), (e), and (f) respectively present the inversion images at the error level of $\delta=10^{-5}$.
\begin{figure}[htbp]
	\centering
    \captionsetup{font={small}}
    	{
		\begin{minipage}{0.3\linewidth}
			\centering
			\includegraphics[height=0.7\columnwidth,width=0.9\linewidth]{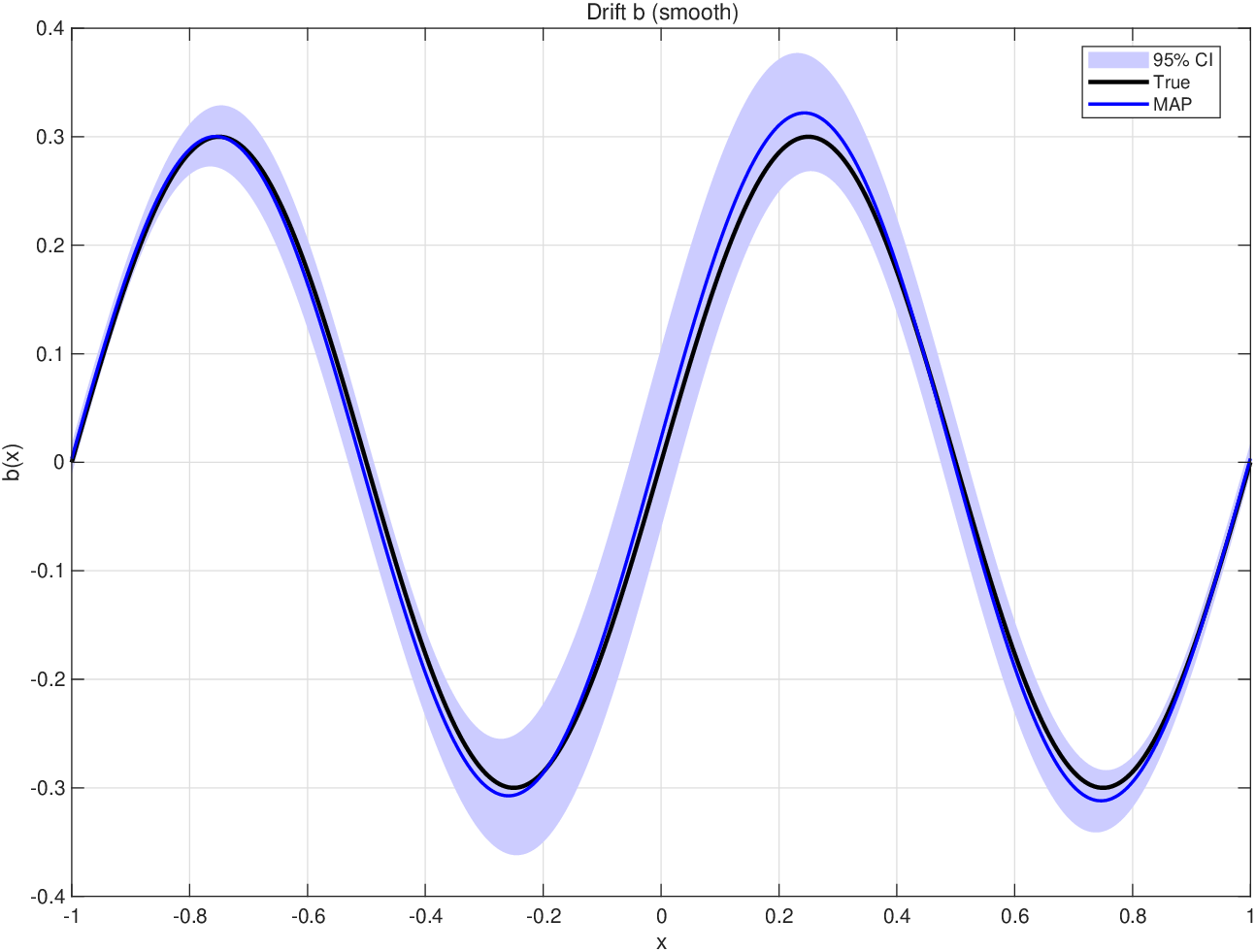}\\
			(a)
		\end{minipage}
		\begin{minipage}{0.3\linewidth}
			\centering
			\includegraphics[height=0.7\columnwidth,width=0.9\linewidth]{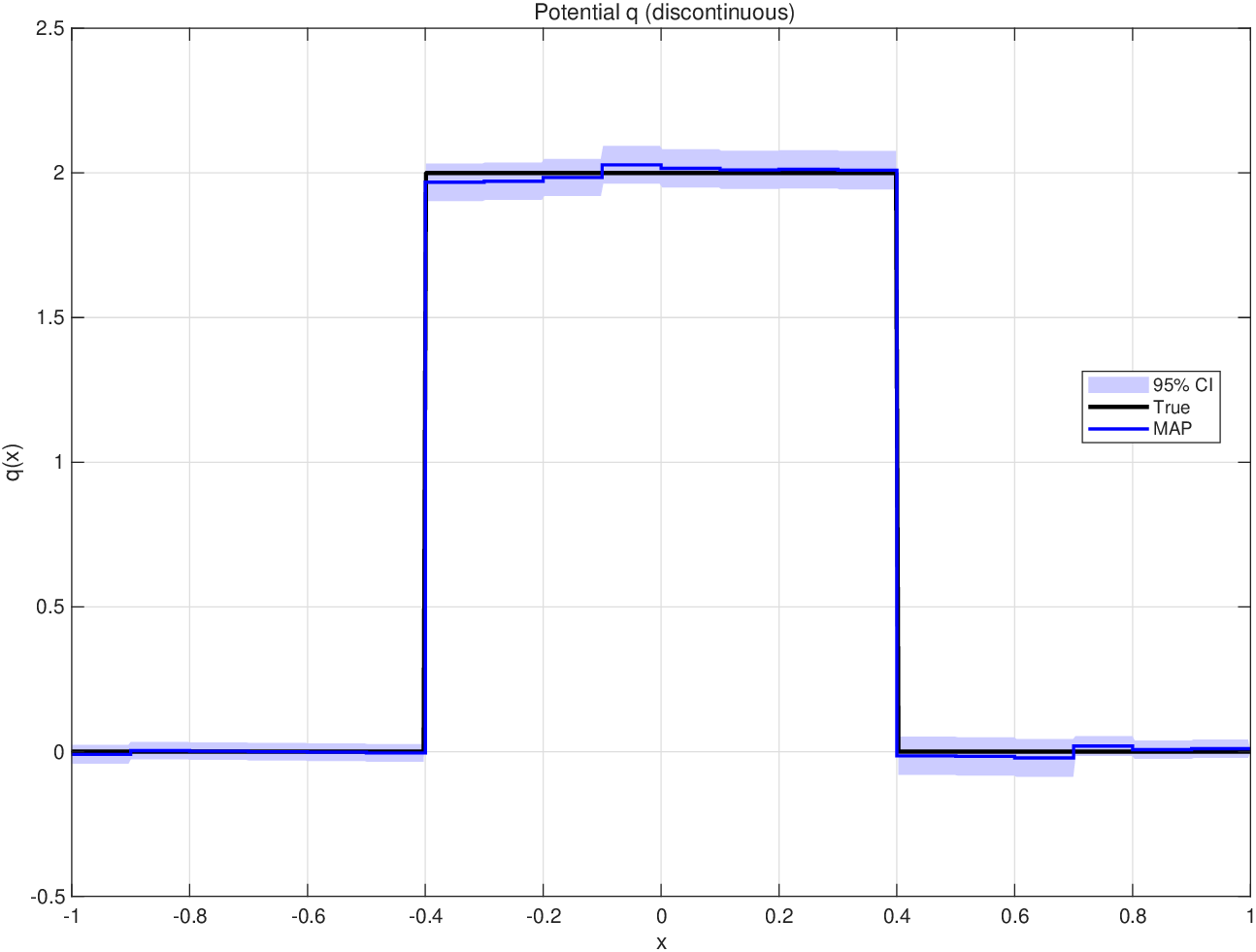}\\
			(b)
		\end{minipage}
		\begin{minipage}{0.3\linewidth}
			\centering
			\includegraphics[height=0.7\columnwidth,width=0.9\linewidth]{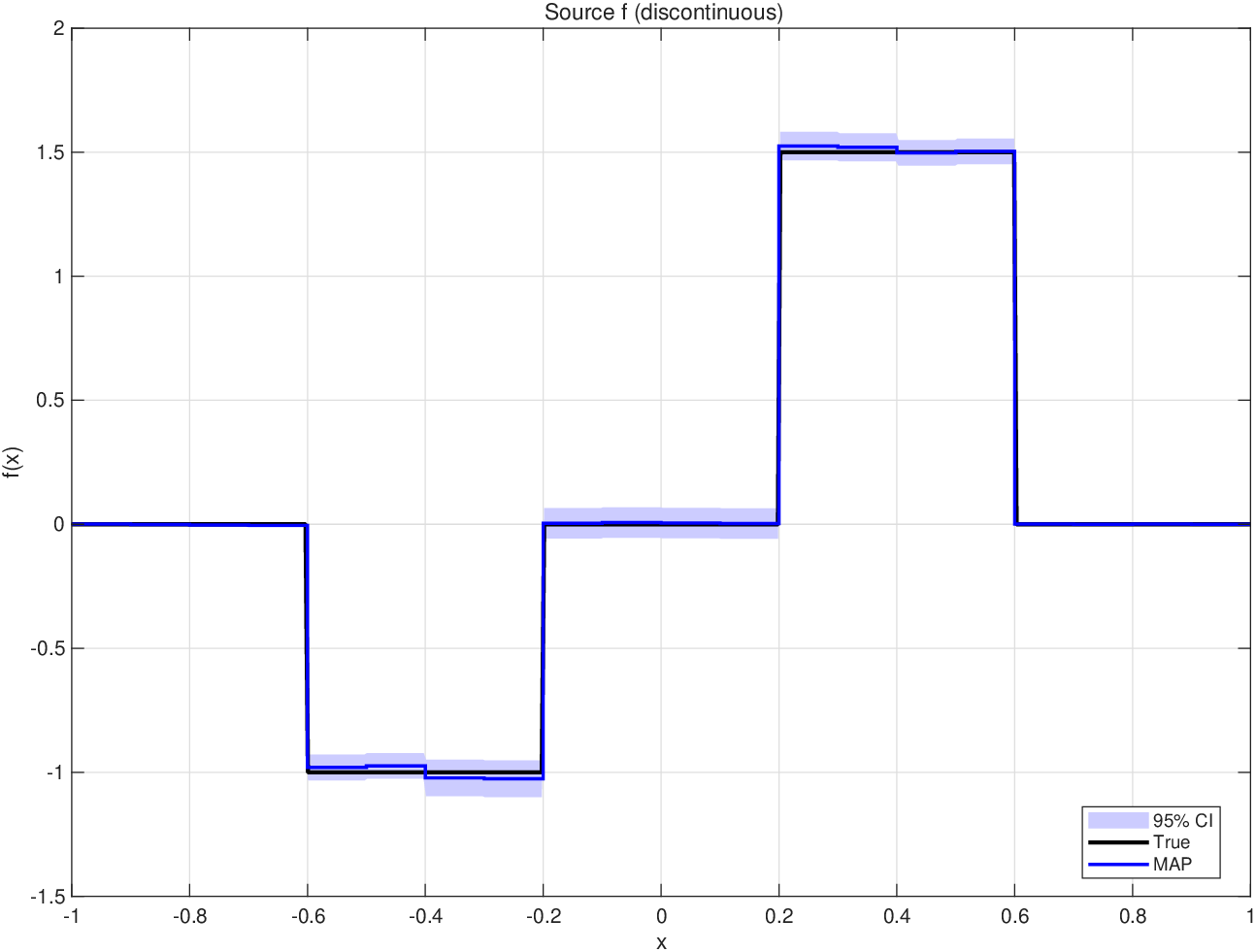}\\
			(c)
		\end{minipage}
	}
\\
	{
		\begin{minipage}{0.3\linewidth}
			\centering
			\includegraphics[height=0.7\columnwidth,width=0.9\linewidth]{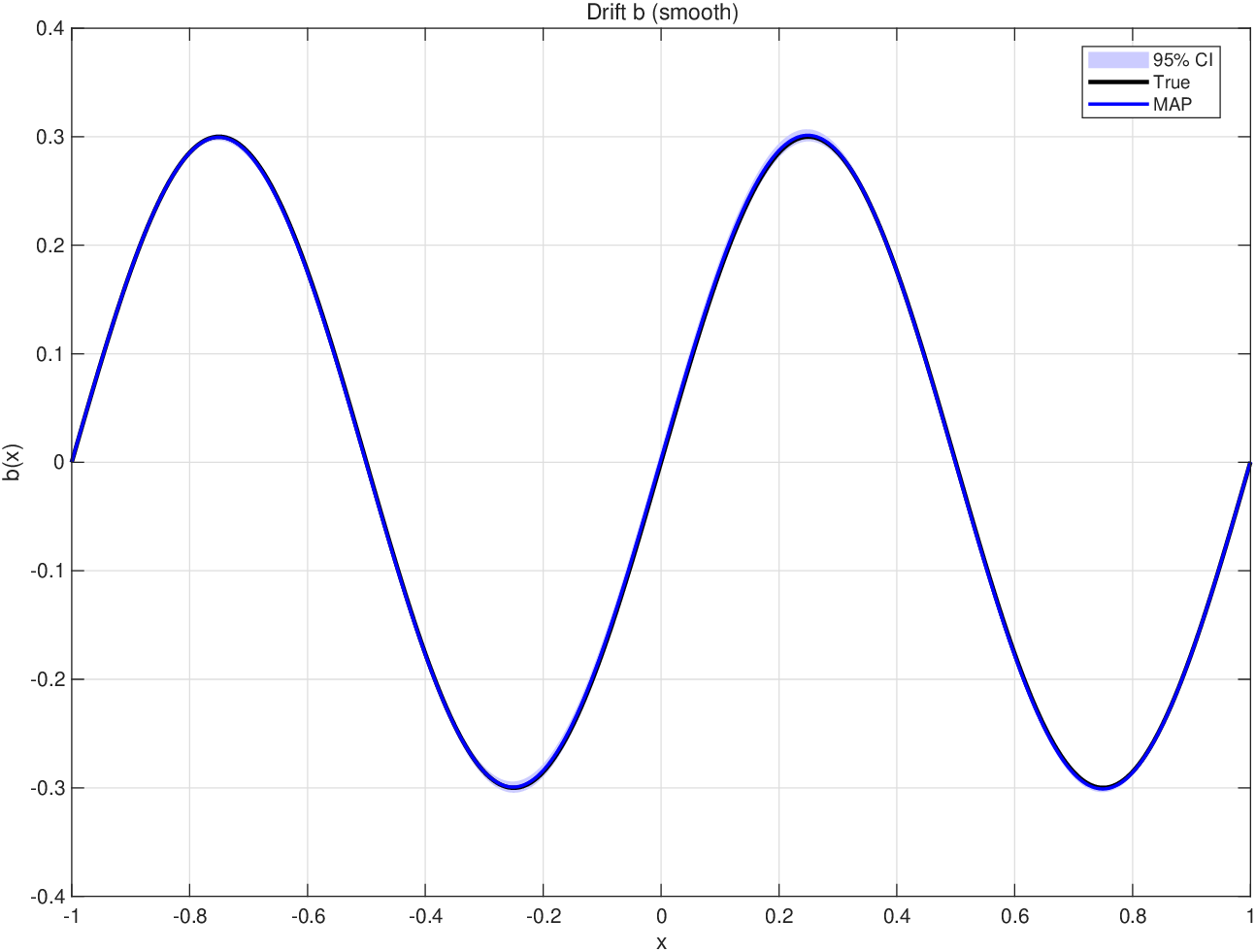}\\
			(d)
		\end{minipage}
		\begin{minipage}{0.3\linewidth}
			\centering
			\includegraphics[height=0.7\columnwidth,width=0.9\linewidth]{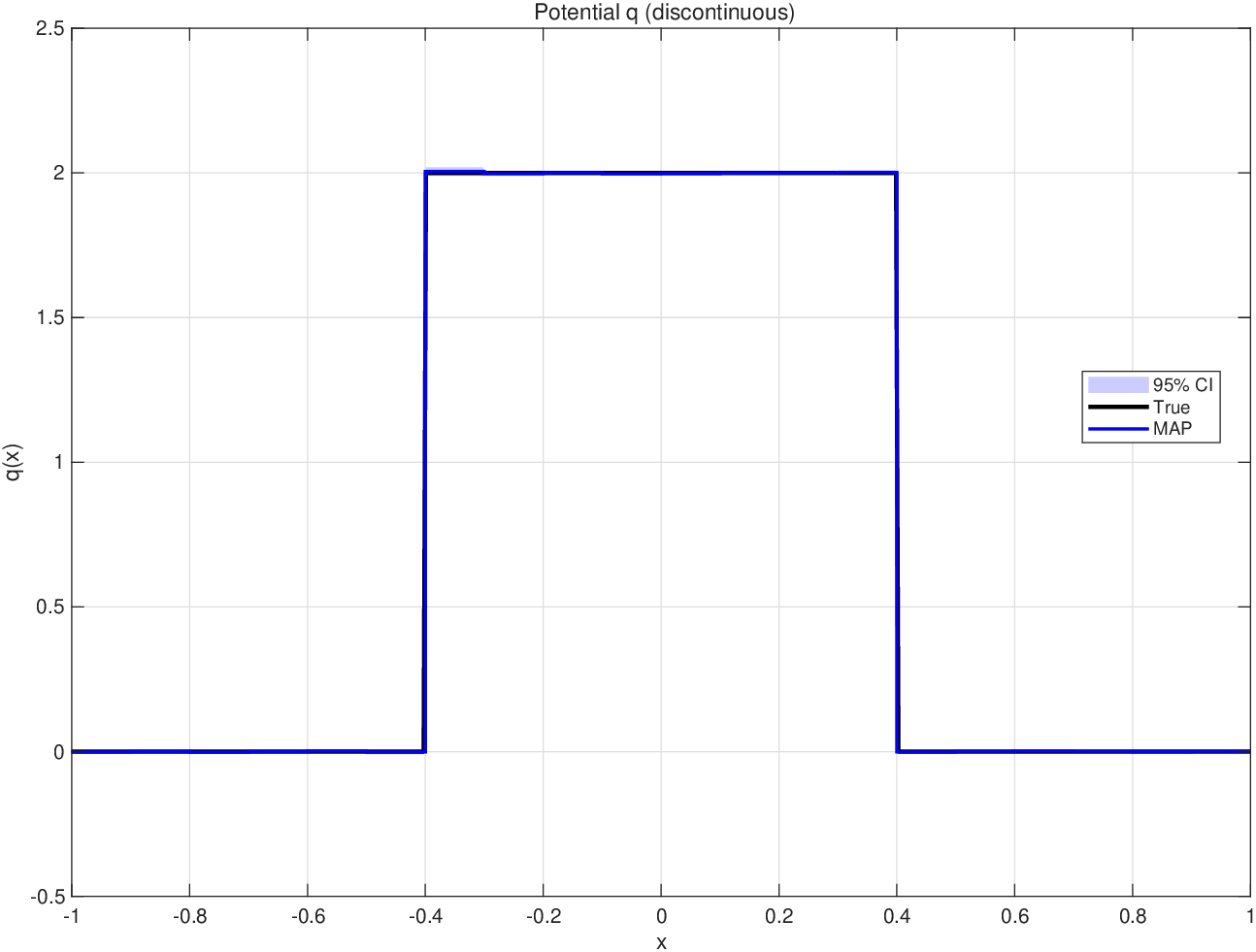}\\
			(e)
		\end{minipage}
		\begin{minipage}{0.3\linewidth}
			\centering
			\includegraphics[height=0.7\columnwidth,width=0.9\linewidth]{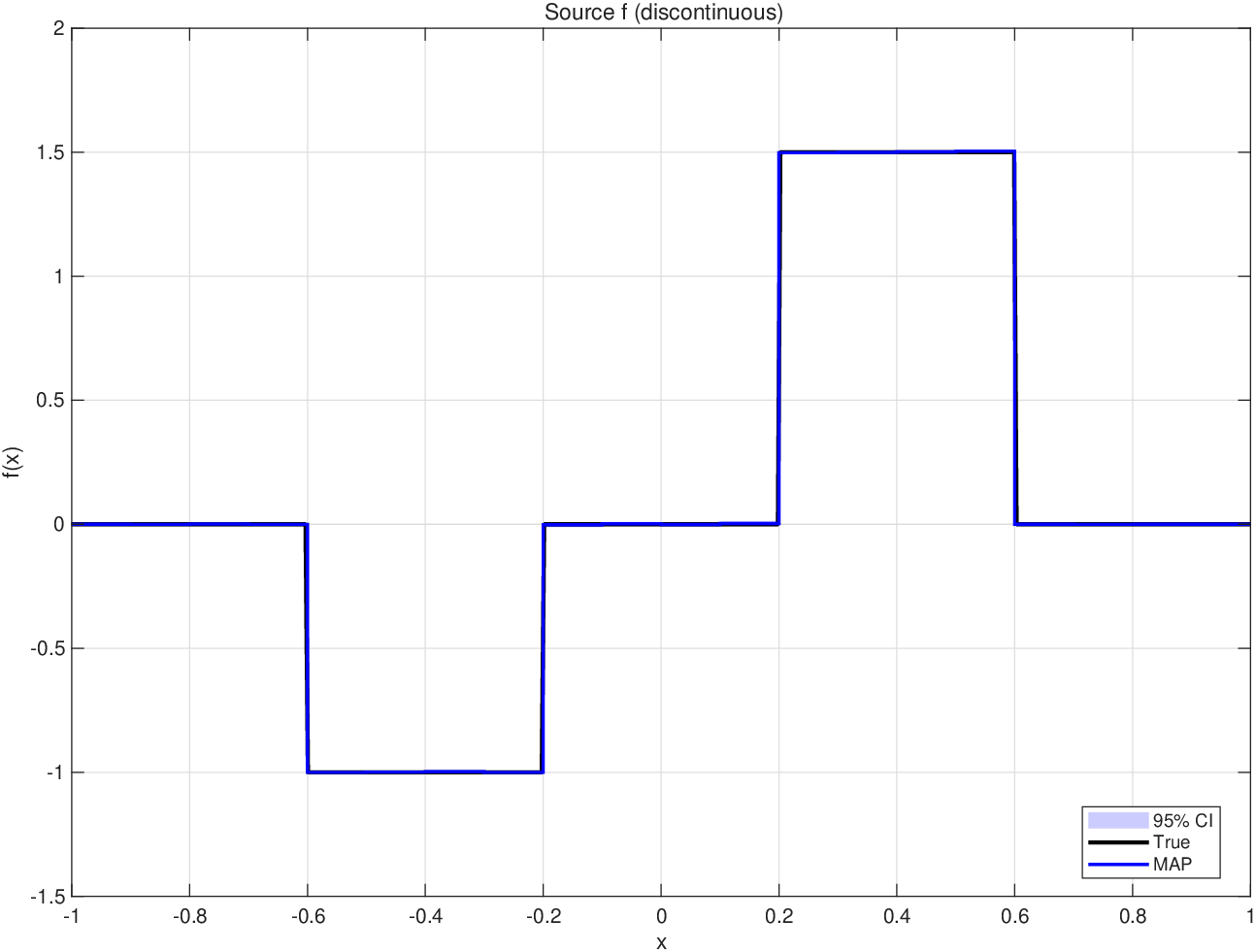}\\
			(f)
		\end{minipage}
	}
\caption{The numerical results for Example 4.}
	\label{fig4}
\end{figure}

From the above examples, it can be seen that the hierarchical Bayesian inversion framework we used is effective. Moreover, by adopting the RBF surrogate model approach, our computational cost has been significantly reduced, greatly improving the computational efficiency. This also proves that our theory on the uniqueness of the simultaneous inversion problem of the multi-fractional Laplacian Schr\"{o}dinger equation with drift terms is correct.
\section{Conclusion}\label{sec7}

We have proposed a unified framework for studying the fractional Calder\'on problem involving drift, potential and source terms for multi-fractional operators
\begin{equation*}
\sum_{k=1}^m (-\Delta)^{s_k} u + b(x)\cdot\nabla u + q(x) u = f(x).
\end{equation*}
In the homogeneous case (\(f=0\)) and the non-homogeneous case (\(f\neq 0\)), we have proved the uniqueness results for simultaneously inverting ($b$, $q$) or ($b$, $q$, $f$). The key elements of these results include the strong unique continuation property of multiple fractional operators, the Runge approximation property, and the generalized Alessandrini identity.

\section*{Appendix}

Proof of Lemma \ref{lm23}:

\begin{proof}
First, Let $0<s_1<\cdots<s_m$ with $s_m>1/2$, and let $q\in L^\infty(\Omega)$. Then for all $u,w\in H^{s_m}(\mathbb R^n)$, we have
\begin{equation}
\label{eq24}
\begin{aligned}
&\left|\sum_{k=1}^m \big((-\Delta)^{s_k/2}v,\,(-\Delta)^{s_k/2}w\big)_{\mathbb R^n}\right|
+ |(q v,w)_\Omega| \\
&\le C(1+\|q\|_{L^\infty(\Omega)})
\Big( \|v\|_{\dot H^{s_m}}\|w\|_{\dot H^{s_m}} + \|v\|_{L^2}\|w\|_{L^2} \Big) \\
&\le C(1+\|q\|_{L^\infty(\Omega)})\,\|v\|_{H^{s_m}}\|w\|_{H^{s_m}},
\end{aligned}
\end{equation}
where the constant $C$ depends only on $n,m,\{s_k\}$.

Next, we will prove \eqref{eq24}, by the triangle inequality and Cauchy-Schwarz,
\begin{equation*}
\left|\sum_{k=1}^m \big((-\Delta)^{s_k/2}v,\,(-\Delta)^{s_k/2}w\big)_{\mathbb R^n}\right|
\le \sum_{k=1}^m \|v\|_{\dot H^{s_k}}\|w\|_{\dot H^{s_k}}.
\end{equation*}
For each $k$, we use a frequency decomposition by splitting the frequency space into $|\xi|\le 1$ and $|\xi|>1$
\begin{equation*}
\|v\|_{\dot H^{s_k}}^2
= \int_{|\xi|\le 1} |\xi|^{2s_k}|\widehat v(\xi)|^2\,d\xi
+ \int_{|\xi|>1} |\xi|^{2s_k}|\widehat v(\xi)|^2\,d\xi.
\end{equation*}
On the low-frequency part $|\xi|\le 1$, we have $|\xi|^{2s_k}\le 1$, hence this contribution is bounded by $\|u\|_{L^2}^2$. On the high-frequency part $|\xi|>1$, we have $|\xi|^{2s_k}\le |\xi|^{2s_m}$, hence the contribution is bounded by $\|u\|_{\dot H^{s_m}}^2$. Therefore,
\begin{equation*}
\|v\|_{\dot H^{s_k}} \le C\big(\|v\|_{L^2}+\|v\|_{\dot H^{s_m}}\big)\le 2C\|v\|_{H^{s_m}},
\end{equation*}
with a constant $C$ depending only on $n,s_k,s_m$, and similarly for $w$. Therefore,
\begin{equation*}
\begin{aligned}
\left|\sum_{k=1}^m \big((-\Delta)^{s_k/2}v,\,(-\Delta)^{s_k/2}w\big)_{\mathbb R^n}\right|
&\le \sum_{k=1}^m \|v\|_{\dot H^{s_k}}\|w\|_{\dot H^{s_k}} \\
&\le C \|v\|_{H^{s_m}}\|w\|_{H^{s_m}}.
\end{aligned}
\end{equation*}
 Since $q\in L^\infty(\Omega)$,
\begin{equation}
\label{eq25}
|(q v,w)_\Omega| \le \|q\|_{L^\infty(\Omega)}\|v\|_{L^2(\Omega)}\|w\|_{L^2(\Omega)}
\le C\|q\|_{L^\infty(\Omega)}\|v\|_{H^{s_m}}\|w\|_{H^{s_m}}.
\end{equation}
Combining \eqref{eq25},
\begin{equation*}
\left|\sum_{k=1}^m \big((-\Delta)^{s_k/2}v,\,(-\Delta)^{s_k/2}w\big)_{\mathbb R^n}\right|
+ |(q v,w)_\Omega|\le C(1+\|q\|_{L^\infty})\|v\|_{H^{s_m}}\|w\|_{H^{s_m}}.
\end{equation*}
Since $0<1-s_m<\frac12$ and $\Omega$ is Lipschitz, we have $\widetilde H^{1-s_m}(\Omega)=H^{1-s_m}(\Omega)$ and the embedding $H^{s_m}(\Omega)\hookrightarrow H^{1-s_m}(\Omega)$ is continuous (because $s_m>1-s_m$). We extend $b$ to a function $B\in W^{1-s_m,\infty}(\mathbb R^n)$ with $\|B\|_{W^{1-s_m,\infty}}\le 2\|b\|_{W^{1-s_m,\infty}(\Omega)}$. Then
\begin{equation}
\label{eq26}
\begin{aligned}
\left|\int_\Omega (b\cdot\nabla v)w\,dx\right|
&\le \|b w\|_{\widetilde H^{1-s_m}(\Omega)} \|\nabla v\|_{H^{s_m-1}(\Omega)} \\
&\le C\|B w\|_{H^{1-s_m}(\mathbb R^n)} \|\nabla v\|_{H^{s_m-1}(\mathbb R^n)} \\
&\le C\|w\|_{H^{1-s_m}(\mathbb R^n)} \|B\|_{W^{1-s_m,\infty}(\mathbb R^n)} \|\nabla v\|_{H^{s_m-1}(\mathbb R^n)} \\
&\le C\|w\|_{H^{s_m}(\mathbb R^n)} \|b\|_{W^{1-s_m,\infty}(\Omega)} \|v\|_{H^{s_m}(\mathbb R^n)}.
\end{aligned}
\end{equation}
In the third line we used the multiplier property of $W^{1-s_m,\infty}$ (which follows from the Kato--Ponce inequality since $1-s_m<1$), and in the last line we used the continuous embedding $H^{s_m}\hookrightarrow H^{1-s_m}$.
Finally, combining \eqref{eq24} and \eqref{eq26}, one has the desired result
\begin{equation}
\label{eq27}
|B_{b,q}(v,w)| \le C\|v\|_{H^{s_m}(\mathbb{R}^n)}\|w\|_{H^{s_m}(\mathbb{R}^n)}.
\end{equation}
\end{proof}

Proof of Lemma \ref{lm24}:

\begin{proof}
First note that, since $s_m$ is the largest exponent among the fractional orders, we have
\begin{equation*}
\sum_{k=1}^m \|(-\Delta)^{s_k/2}u\|_{L^2(\mathbb R^n)}^2 \ge \|(-\Delta)^{s_m/2}u\|_{L^2(\mathbb R^n)}^2.
\end{equation*}
By the fractional Poincar\'e inequality (Lemma 2.1), there exists $C_P>0$ such that
\begin{equation*}
\|u\|_{L^2(\mathbb R^n)} \le C_P \|(-\Delta)^{s_m/2}u\|_{L^2(\mathbb R^n)} \quad \text{for all } u\in \widetilde H^{s_m}(\Omega).
\end{equation*}
Consequently, the norm $\|\cdot\|_{H^{s_m}}$ is equivalent to $\|(-\Delta)^{s_m/2}\cdot\|_{L^2}$ on $\widetilde H^{s_m}(\Omega)$; in particular, there is a constant $C_1>0$ such that $\|u\|_{H^{s_m}} \le C_1 \|(-\Delta)^{s_m/2}u\|_{L^2}$.

Now, from the definition of $B_{b,q}$,
\begin{equation*}
B_{b,q}(u,u) = \sum_{k=1}^m \|(-\Delta)^{s_k/2}u\|_{L^2}^2 + \int_\Omega (b\cdot\nabla u)u\,dx + \int_\Omega q |u|^2\,dx.
\end{equation*}
Thus,
\begin{equation*}
B_{b,q}(u,u) \ge \|(-\Delta)^{s_m/2}u\|_{L^2}^2 - \|q\|_{L^\infty(\Omega)}\|u\|_{L^2}^2 - \left|\int_\Omega (b\cdot\nabla u)u\,dx\right|.
\end{equation*}
For the drift term, we first use the boundedness estimate from Lemma~\ref{lm23} with $w=u$:
\begin{equation*}
\left|\int_\Omega (b\cdot\nabla u)u\,dx\right|
\le C\|b\|_{W^{1-s_m,\infty}(\Omega)}
\|u\|_{H^{1-s_m}(\Omega)}
\|u\|_{H^{s_m}(\Omega)}.
\end{equation*}
Since $1-s_m<s_m$ and $u\in\widetilde H^{s_m}(\Omega)$, the fractional interpolation inequality gives
\begin{equation*}
\|u\|_{H^{1-s_m}(\mathbb R^n)}
\le \|u\|_{L^2(\mathbb R^n)}^{1-\theta}
\|u\|_{H^{s_m}(\mathbb R^n)}^{\theta},
\qquad
\theta=\frac{1-s_m}{s_m}\in(0,1).
\end{equation*}
Hence
\begin{equation*}
\left|\int_\Omega (b\cdot\nabla u)u\,dx\right|
\le C\|b\|_{W^{1-s_m,\infty}(\Omega)}
\|u\|_{L^2(\mathbb R^n)}^{1-\theta}
\|u\|_{H^{s_m}(\mathbb R^n)}^{1+\theta}.
\end{equation*}
Applying Young's inequality with exponents $p=\frac{2}{1+\theta}$ and
$p'=\frac{2}{1-\theta}$ yields
\begin{equation*}
\left|\int_\Omega (b\cdot\nabla u)u\,dx\right|
\le \epsilon\|u\|_{H^{s_m}(\mathbb R^n)}^2
+ C_\epsilon\|b\|_{W^{1-s_m,\infty}(\Omega)}^{\frac{2}{1-\theta}}
\|u\|_{L^2(\mathbb R^n)}^2.
\end{equation*}
Choosing $\epsilon=\frac{1}{4C_1^2}$ so that
$\epsilon\|u\|_{H^{s_m}}^2\le\frac{1}{4}\|(-\Delta)^{s_m/2}u\|_{L^2}^2$,
we obtain
\begin{equation*}
B_{b,q}(u,u) \ge \frac{1}{2}\|(-\Delta)^{s_m/2}u\|_{L^2}^2
- \bigl(\|q\|_{L^\infty}
+ C_\epsilon\|b\|_{W^{1-s_m,\infty}}^{\frac{2}{1-\theta}}\bigr)
\|u\|_{L^2}^2.
\end{equation*}
Using the Poincar¨¦ inequality again to relate
$\|(-\Delta)^{s_m/2}u\|_{L^2}^2$ to $\|u\|_{H^{s_m}}^2$, we conclude
\begin{equation*}
B_{b,q}(u,u) + \mu \|u\|_{L^2}^2 \ge c_0 \|u\|_{H^{s_m}}^2,
\end{equation*}
with $\mu = \|q\|_{L^\infty}
+ C_\epsilon\|b\|_{W^{1-s_m,\infty}}^{\frac{2}{1-\theta}}$
and $c_0>0$ chosen appropriately. This proves the lemma.
\end{proof}

Proof of Lemma \ref{lm25}:
\begin{proof}
We first reduce to the case $g=0$ by setting $v=u-g\in\widetilde H^{s_m}(\Omega)$. The equation becomes
\begin{equation*}
B_{b,q}(v,w)-\lambda(v,w)_\Omega = \widetilde F(w),\quad \forall w\in\widetilde H^{s_m}(\Omega),
\end{equation*}
with $\widetilde F\in(\widetilde H^{s_m})^*$. Hence it suffices to prove the result for $f=0$.

Let $\mu\ge0$ be as in Lemma \ref{lm24} and define $B_\mu(v,w):=B_{b,q}(v,w)+\mu(v,w)_\Omega$. By Lemma \ref{lm24}, $B_\mu$ is coercive on $\widetilde H^{s_m}(\Omega)$:
\begin{equation*}
B_\mu(v,v)\ge c_0\|v\|_{H^{s_m}}^2.
\end{equation*}
By the generalized Lax-Milgram theorem (for continuous coercive bilinear forms), for each $G\in(\widetilde H^{s_m})^*$ there exists a unique $v=G_\mu G\in\widetilde H^{s_m}$ such that
\begin{equation*}
B_\mu(v,w)=G(w),\quad \forall w\in\widetilde H^{s_m},
\end{equation*}
and $\|v\|_{H^{s_m}}\le C\|G\|_{(\widetilde H^{s_m})^*}$. Define $T_\mu:L^2(\Omega)\to L^2(\Omega)$ by $T_\mu h:=G_\mu(Jh)$, where $J:L^2(\Omega)\hookrightarrow(\widetilde H^{s_m})^*$ is the natural embedding. Since $\widetilde H^{s_m}\hookrightarrow L^2$ is compact, $T_\mu$ is compact.

Now the equation $B_{b,q}(v,w)-\lambda(v,w)_\Omega=G(w)$ is equivalent to
\begin{equation}
\label{eq281}
v-(\mu+\lambda)T_\mu v = G_\mu G.
\end{equation}
By the Riesz-Schauder theory for compact operators, the set of non-zero eigenvalues of $T_\mu$ is at most countable and has no finite accumulation point. Define
\begin{equation*}
\Sigma:=\left\{\frac1{\gamma}-\mu : \gamma\in\sigma(T_\mu)\setminus\{0\}\right\}.
\end{equation*}
For $\lambda\notin\Sigma$, $\mu+\lambda\notin\sigma(T_\mu)$, hence $I-(\mu+\lambda)T_\mu$ is invertible on $L^2(\Omega)$. Thus \eqref{eq281} has a unique solution $v\in L^2$, and since $v=(\mu+\lambda)T_\mu v+G_\mu G\in\widetilde H^{s_m}$, we have $v\in\widetilde H^{s_m}$.

For the adjoint problem, the same argument applies to $B_{b,q}^*$; the compact operator $T_\mu^*$ has the same non-zero spectrum as $T_\mu$, hence the same exceptional set $\Sigma$. Finally, if $\lambda=0\notin\Sigma$, then the homogeneous equation has only the trivial solution, which is exactly Assumption \ref{ass1}.
\end{proof}

Proof of Lemma \ref{lm32}:

\begin{proof}
 First, it is easy to see that $\mathcal{D} \subset \widetilde{H}^{s_m}(\Omega)$. By the Hahn-Banach theorem, it suffices to show that for any $F \in (\widetilde{H}^{s_m}(\Omega))^*$ such that $F(v) = 0$ for any $v \in \mathcal{D}$ we must have $F \equiv 0$. Since $F(v) = 0$ for any $v \in \mathcal{D}$, we have
\begin{equation}
\label{eq32}
F(P_{b,q} g - g) = 0 \quad \text{for } g \in C_c^\infty(W).
\end{equation}
Next, we claim that
\begin{equation}
\label{eq33}
F(P_{b,q} g - g) = -B_{b,q}(g,\phi), \quad \text{for } g \in C_c^\infty(W),
\end{equation}
where $\phi \in \widetilde{H}^s(\Omega)$ is the unique solution of
\begin{equation*}
\sum_{k=1}^m (-\Delta)^{s_k} \phi - \nabla \cdot (b\phi) + q\phi = F \quad \text{in } \Omega, \qquad \phi = 0 \text{ in } \Omega_e.
\end{equation*}
In its weak form, it becomes
\begin{equation*}
B_{b,q}^*(\phi,w) = F(w) \quad \text{for } w \in \widetilde{H}^{s_m}(\Omega).
\end{equation*}
Let $g \in C_c^\infty(W)$ and $u_g \coloneqq P_{b,q} g \in H^{s_m}(\mathbb{R}^n)$, then $u_g - g \in \widetilde{H}^{s_m}(\Omega)$ and by \eqref{eq28}
\begin{equation*}
 F(P_{b,q}g - g) = B_{b,q}^*(\phi, u_g - g)=B_{b,q}(P_{b,q}g - g, \phi) = -B_{b,q}(g,\phi),
\end{equation*}
 Combining \eqref{eq32} and \eqref{eq33}, we have
\begin{equation*}
B_{b,q}(g,\phi) = 0 \quad \text{for } g \in C_c^\infty(W).
\end{equation*}
Since $g = 0$ in $\mathbb{R}^n \setminus \overline{W}$ and $\phi \in \widetilde{H}^{s_m}(\Omega)$, $B_{b,q}(g,\phi) = 0$ implies that
\begin{equation*}
0 = \sum_{k=1}^m \int_{\mathbb R^n} (-\Delta)^{s_k/2} g \, (-\Delta)^{s_k/2} \phi \, dx
= \int_{\mathbb R^n} g \, \left(\sum_{k=1}^m (-\Delta)^{s_k}\phi\right) dx.
\end{equation*}
In particular, $\phi \in H^{s_m}(\mathbb{R}^n)$ satisfies
\begin{equation*}
\phi|_W = \sum_{k=1}^m (-\Delta)^{s_k}\phi |_W= 0,
\end{equation*}
 By Lemma \ref{lm26}, \(\phi\equiv 0\), hence \(F\equiv 0\). The Hahn--Banach theorem then implies that \(\mathcal D\) is dense in \(\widetilde H^{s_m}(\Omega)\), and a fortiori in \(L^2(\Omega)\).
\end{proof}

Proof of Lemma \ref{lm34}:
\begin{proof}
First, for any $g\in C_c^\infty(W)$, by definition of the Poisson operator, $P_{b,q}g=g+v$, where $v\in\widetilde H^{s_m}(\Omega)$ satisfies
\begin{equation*}
r_\Omega L_{b,q}v \in C^\infty(\overline{\Omega}).
\end{equation*}
By Proposition~\ref{prop:Hsmu}, we have $v\in\mathcal E^{s_m}(\overline{\Omega})$, which implies $\mathcal D\subset\mathcal E^{s_m}(\overline{\Omega})$.

Next, let $\mathcal L$ be a continuous linear functional on $\mathcal E^{s_m}(\overline{\Omega})$ satisfying
\begin{equation*}
\mathcal L(e_\Omega(r_\Omega P_{b,q}g))=0,\qquad \forall g\in C_c^\infty(W).
\end{equation*}
By the Hahn--Banach theorem, it suffices to show that $\mathcal L\equiv0$. By the definition of the topology of the Fr\'echet space $\mathcal E^{s_m}(\overline{\Omega})$, there exists an integer $\ell$ such that
\begin{equation*}
|\mathcal L(u)|\le C\sum_{k=1}^{\ell}\|u\|_{H^{s_m(k)}(\overline{\Omega})}
\le C\|u\|_{H^{s_m(\ell)}(\overline{\Omega})},\qquad \forall u\in\mathcal E^{s_m}(\overline{\Omega}),
\end{equation*}
for some constant $C>0$ independent of $u$. By Proposition~\ref{prop:Hsmu}(e), $\mathcal E^{s_m}(\overline{\Omega})$ is dense in $H^{s_m(\ell)}(\overline{\Omega})$. Thus $\mathcal L$ has a unique bounded extension $\tilde{\mathcal L}\in (H^{s_m(\ell)}(\overline{\Omega}))^*$.

Consider the homeomorphism from Proposition~\ref{prop:Hsmu}(c),
\begin{equation*}
\mathcal T := r_\Omega L_{b,q} : H^{s_m(\ell)}(\overline{\Omega}) \to H^{\ell-2s_m}(\Omega).
\end{equation*}
Its adjoint is a bounded map
\begin{equation*}
\mathcal T^* : (H^{\ell-2s_m}(\Omega))^* \to (H^{s_m(\ell)}(\overline{\Omega}))^*.
\end{equation*}
The adjoint $\mathcal T^*$ is also a homeomorphism with inverse $(\mathcal T^{-1})^*$. Moreover,
\begin{equation*}
(H^{\ell-2s_m}(\Omega))^* = H_{\overline{\Omega}}^{-\ell+2s_m},
\end{equation*}
so that
\begin{equation*}
\mathcal T^* v(w) = (v,\mathcal T w)_{H_{\overline{\Omega}}^{-\ell+2s_m}\times H^{\ell-2s_m}(\Omega)},\qquad
v\in H_{\overline{\Omega}}^{-\ell+2s_m},\ w\in H^{s_m(\ell)}(\overline{\Omega}).
\end{equation*}
Let $v\in H_{\overline{\Omega}}^{-\ell+2s_m}$ be the unique function satisfying $\mathcal T^*v = \tilde{\mathcal L}$. Choose a sequence $\{v_k\}_{k\in\mathbb N}\subset C_c^\infty(\Omega)$ with $v_k\to v$ in $H^{-\ell+2s_m}(\Omega)$ as $k\to\infty$.

Now, let $g\in C_c^\infty(W)$. Recalling that $e_\Omega(r_\Omega P_{b,q}g)=P_{b,q}g-g$, we compute
\begin{equation}\label{eq:6.1}
\begin{aligned}
0 &= \mathcal L(e_\Omega(r_\Omega P_{b,q}g))
= \tilde{\mathcal L}(P_{b,q}g-g)
= \mathcal T^*v(P_{b,q}g-g) \\
&= (v,\mathcal T(P_{b,q}g-g))
= -(v,\mathcal T g) \\
&= -\lim_{k\to\infty} (v_k, L_{b,q} g) \\
&= -\lim_{k\to\infty} \Bigl( \sum_{j=1}^m (-\Delta)^{s_j} v_k - \nabla\cdot(bv_k) + qv_k,\, g \Bigr),
\end{aligned}
\end{equation}
where we used $\mathcal T P_{b,q}g = 0$ and $v_k\in C_c^\infty(\Omega)$. Since $g\in C_c^\infty(W)$ with $\overline W\cap\overline\Omega=\emptyset$, and $b,q$ are supported in $\Omega$, the last expression simplifies to
\begin{equation*}
\sum_{j=1}^m \bigl((-\Delta)^{s_j} v,\, g\bigr)
= \lim_{k\to\infty} \sum_{j=1}^m \bigl((-\Delta)^{s_j} v_k,\, g\bigr) = 0,
\qquad \forall g\in C_c^\infty(W).
\end{equation*}
Thus $v\in H^{-\ell+2s_m}(\mathbb R^n)$ satisfies
\begin{equation*}
v|_W = \sum_{j=1}^m (-\Delta)^{s_j} v\big|_W = 0.
\end{equation*}
By the strong unique continuation principle for multi-fractional operators (Lemma~\ref{lm26}), we conclude $v\equiv0$ in $\mathbb R^n$. Hence $\tilde{\mathcal L}=0$ and therefore $\mathcal L=0$. This completes the proof.
\end{proof}

\bibliographystyle{unsrt}
\bibliography{liu}

\end{document}